\definecolor{webgreen}{rgb}{0,.5,0}
\newcommand{\seqnum}[1]{\href{http://oeis.org/#1}{\underline{#1}}}
\def\R{{\mathbb{R}}}
\def\C{{\mathbb{C}}}
\def\P{{\mathbb{P}}}
\def\RE{\operatorname{Re}}
\def\pow{\operatorname{pow}}
\def\altern{\operatorname{altern}}
\theoremstyle{plain}
\newtheorem{theorem}{Theorem}
\newtheorem{corollary}[theorem]{Corollary}
\newtheorem{lemma}[theorem]{Lemma}
\newtheorem{proposition}[theorem]{Proposition}
\theoremstyle{definition}
\newtheorem{application}[theorem]{Application}
\newtheorem{open}[theorem]{Open Problem}
\theoremstyle{remark}
\newtheorem{remark}[theorem]{Remark}
\begin{document}

\title{\bf Alternating sums concerning multiplicative arithmetic functions
\thanks{The present scientific contribution is dedicated to
the 650th anniversary of the foundation of the University of P\'ecs,
Hungary.}}
\author{L\'aszl\'o T\'oth \\
Department of Mathematics, University of P\'ecs \\ Ifj\'us\'ag
\'utja 6, H-7624 P\'ecs, Hungary \\ E-mail: {\tt
ltoth@gamma.ttk.pte.hu}}
\date{}
\maketitle

\centerline{Journal of Integer Sequences {\bf 20} (2017), Article 17.2.1}
\medskip

\abstract{We deduce asymptotic formulas for the alternating sums $\sum_{n\le x} (-1)^{n-1} f(n)$
and \newline $\sum_{n\le x} (-1)^{n-1} \frac1{f(n)}$, where $f$ is one of the following classical
multiplicative arithmetic functions: Euler's totient function, the Dedekind function, the sum-of-divisors
function, the divisor function, the gcd-sum function. We also consider analogs of these functions, which are
associated to unitary and exponential divisors, and other special functions. Some of our results improve the error
terms obtained by Bordell\`{e}s and Cloitre. We formulate certain open problems.}

\medskip

2010 {\it Mathematics Subject Classification}: Primary 11N37; Secondary 11A05, 11A25, 30B10.

{\it Key Words and Phrases}: multiplicative arithmetic function, alternating sum, Dirichlet series, asymptotic formula,
reciprocal power series, Euler's totient function, Dedekind function, sum-of-divisors function, divisor function,
gcd-sum function, unitary divisor.

\tableofcontents

\section{Introduction}

Alternating sums and series appear in various topics of mathematics and number theory, in particular.
For example, it is well-known that for $s\in \C$ with $\Re s >1$,
\begin{equation} \label{eta}
\eta(s):=\sum_{n=1}^{\infty} (-1)^{n-1} \frac1{n^s} =
\left(1-\frac1{2^{s-1}} \right) \zeta(s),
\end{equation}
representing the alternating zeta function or Dirichlet's eta
function. Here the left-hand side is convergent for $\Re
s>0$, and this can be used for analytic continuation
of the Riemann zeta function for $\Re s >0$.
See, e.g., Tenenbaum \cite[Sect.\ II.3.2]{Ten2015}.

Bordell\`{e}s and Cloitre \cite{BorClo2013} established asymptotic
formulas with error terms for alternating sums
\begin{equation} \label{altern_sum_f}
\sum_{n\le x} (-1)^{n-1} f(n),
\end{equation}
where $f(n)=1/g(n)$ and $g$ belongs to a class of multiplicative arithmetic functions, including Euler's totient
function $\varphi$, the sum-of-divisors function $\sigma$ and the Dedekind function $\psi$. It seems that there are no other
results in the literature for alternating sums of type \eqref{altern_sum_f}.

Using a different approach, also based on the convolution method, we show that for many classical multiplicative
arithmetic functions $f$, estimates with sharp error terms for the alternating sum \eqref{altern_sum_f} can easily be deduced by
using known results for
\begin{equation}  \label{sum_f}
\sum_{n\le x} f(n).
\end{equation}

For other given multiplicative functions $f$, a difficulty arises,
namely to estimate the coefficients of the reciprocal of a formal
power series, more exactly the reciprocal of the Bell series of $f$
for $p=2$. If the coefficients of the original power series are
positive and log-convex, then a result of Kaluza \cite{Kal1928} can
be used. The obtained error terms for \eqref{altern_sum_f} are
usually the same, or slightly larger than for \eqref{sum_f}.

In this way we improve some of the error terms obtained in \cite{BorClo2013}.
We also deduce estimates for other classical
multiplicative functions $f$. As a tool, we use formulas for
alternating Dirichlet series
\begin{equation} \label{altern_Dir}
D_{\altern}(f,s):=\sum_{n=1}^{\infty} (-1)^{n-1} \frac{f(n)}{n^s},
\end{equation}
generalizing \eqref{eta}.

In the case of some other functions $f$, a version of Kendall's renewal theorem (from probability theory) can be applied. 
Berenhaut, Allen, and Fraser obtained \cite{Ber2006}
an explicit form of Kendall's theorem (also see \cite{Ber2008}),
but this cannot be used for the functions we deal with.
We prove a new explicit Kendall-type inequality, which can be applied in some cases. As far as we know, there are no other
similar applicable results to obtain better error terms in the literature.
We formulate several open problems concerning the error terms of the
presented asymptotic formulas.

Finally, a generalization of the alternating Dirichlet series \eqref{altern_Dir} and the alternating sum \eqref{altern_sum_f} is discussed.

\section{General results}

\subsection{Alternating Dirichlet series} \label{Sect_Alter_Dir_series}

Let
\begin{equation} \label{Dir}
D(f,s):=\sum_{n=1}^{\infty} \frac{f(n)}{n^s}
\end{equation}
denote the Dirichlet series of the function $f$. If $f$ is multiplicative, then it can be expanded into the Euler product
\begin{equation} \label{multipl_Euler}
D(f,s)=\prod_{p\in \P} \sum_{\nu=0}^{\infty}
\frac{f(p^{\nu})}{p^{\nu s}}.
\end{equation}

If $f$ is completely multiplicative, then
\begin{equation} \label{compl_mult_sum}
\sum_{\nu=0}^{\infty} \frac{f(p^{\nu})}{p^{\nu s}} =
\left(1-\frac{f(p)}{p^s}\right)^{-1}
\end{equation}
and
\begin{equation} \label{compl_multipl_Euler}
D(f,s)=\prod_{p\in \P} \left(1-\frac{f(p)}{p^s}\right)^{-1}.
\end{equation}

\begin{proposition} \label{prop_1}
If $f$ is a multiplicative function, then
\begin{equation} \label{form_Dir_altern}
\sum_{n=1}^{\infty} (-1)^{n-1} \frac{f(n)}{n^s} = D(f,s) \left(
2\left( \sum_{\nu=0}^{\infty}\frac{f(2^\nu)}{2^{\nu s}} \right)^{-1}
-1\right),
\end{equation}
and if $f$ is completely multiplicative, then
\begin{equation*}
\sum_{n=1}^{\infty} (-1)^{n-1} \frac{f(n)}{n^s} =
\left(1-\frac{f(2)}{2^{s-1}} \right) \prod_{p\in \P} \left(1-\frac{f(p)}{p^s} \right)^{-1},
\end{equation*}
formally or in case of convergence.
\end{proposition}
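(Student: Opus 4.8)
The plan is to split the alternating sum according to the parity of $n$, using the elementary identity that $(-1)^{n-1}$ equals $+1$ for odd $n$ and $-1$ for even $n$, i.e.\ $(-1)^{n-1} = 1 - 2\cdot\mathbf{1}_{2\mid n}$. This immediately gives
\begin{equation*}
\sum_{n=1}^{\infty} (-1)^{n-1}\frac{f(n)}{n^s} = D(f,s) - 2\sum_{\substack{n\ge 1\\ 2\mid n}} \frac{f(n)}{n^s},
\end{equation*}
so the task reduces to evaluating the Dirichlet series restricted to even $n$. I would do this by writing each even $n$ uniquely as $n = 2^a m$ with $a\ge 1$ and $m$ odd, and invoking multiplicativity in the form $f(2^a m) = f(2^a)f(m)$.

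Write $E_2 := \sum_{\nu=0}^{\infty} f(2^\nu)/2^{\nu s}$ for the Euler factor of $f$ at the prime $2$. From the Euler product \eqref{multipl_Euler}, the part of $D(f,s)$ supported on odd $n$ factors as $\sum_{m\text{ odd}} f(m)/m^s = \prod_{p\in\P,\,p\ne 2}\sum_{\nu\ge 0} f(p^\nu)/p^{\nu s} = D(f,s)/E_2$, while $\sum_{a\ge 1} f(2^a)/2^{as} = E_2 - 1$. Hence the even-$n$ sum equals $(E_2-1)\cdot D(f,s)/E_2 = D(f,s)\bigl(1 - 1/E_2\bigr)$, and substituting back yields
\begin{equation*}
\sum_{n=1}^{\infty} (-1)^{n-1}\frac{f(n)}{n^s} = D(f,s)\left(1 - 2\Bigl(1-\frac1{E_2}\Bigr)\right) = D(f,s)\left(\frac{2}{E_2}-1\right),
\end{equation*}
which is \eqref{form_Dir_altern}. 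For the completely multiplicative case I would just specialize: by \eqref{compl_mult_sum} one has $E_2 = (1-f(2)/2^s)^{-1}$, so $2/E_2 - 1 = 2\bigl(1-f(2)/2^s\bigr) - 1 = 1 - f(2)/2^{s-1}$, and combining this with $D(f,s) = \prod_{p\in\P}(1-f(p)/p^s)^{-1}$ from \eqref{compl_multipl_Euler} gives the second displayed identity.

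There is essentially no hard step here; the only point requiring a word of care is the legitimacy of the rearrangements (splitting by parity, peeling the odd part off the Euler product, and the geometric-type identity for $\sum_{a\ge1}$). These are valid term-by-term as identities of formal Dirichlet series, and they hold analytically whenever the relevant series converge absolutely, i.e.\ for $\RE s$ large enough in terms of the growth of $f$ — precisely the ``formally or in case of convergence'' proviso in the statement. I would record this remark once at the outset and then carry out the computation as above.
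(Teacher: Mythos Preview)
Your proof is correct and is essentially the same as the paper's: the paper writes $(-1)^{n-1}=-1+2\cdot\mathbf{1}_{n\text{ odd}}$ and extracts the odd-$n$ sum via the Euler product, whereas you write $(-1)^{n-1}=1-2\cdot\mathbf{1}_{n\text{ even}}$ and extract the even-$n$ sum --- a cosmetic difference that leads to the same computation $D(f,s)(2/E_2-1)$.
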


\begin{proof}
We have by using \eqref{multipl_Euler},
\begin{gather*}
\sum_{n=1}^{\infty} (-1)^{n-1} \frac{f(n)}{n^s} = - \sum_{n=1}^{\infty}
\frac{f(n)}{n^s} +  2 \sum_{\substack{n=1\\ n \text{ odd}}}^{\infty}
\frac{f(n)}{n^s} = - D(f,s) +2 \prod_{\substack{p\in \P\\ p>2}}
\sum_{\nu=0}^{\infty} \frac{f(p^{\nu})}{p^{\nu s}} \\ = - D(f,s) +2
D(f,s) \left(\sum_{\nu=0}^{\infty} \frac{f(2^{\nu})}{2^{\nu s}}\right)^{-1}
 = D(f,s) \left( 2 \left( \sum_{\nu=0}^{\infty}\frac{f(2^\nu)}{2^{\nu
s}} \right)^{-1} -1\right).
\end{gather*}

If $f$ is completely multiplicative, then use identities
\eqref{compl_mult_sum} and \eqref{compl_multipl_Euler}.
\end{proof}

For special choices of $f$ we obtain formulas for the alternating Dirichlet series \eqref{altern_Dir}.
For example, let $f=\varphi$ be Euler's totient function. For every prime $p$,
\begin{equation} \label{varphi_p}
\sum_{\nu=0}^{\infty} \frac{\varphi(p^\nu)}{p^{\nu s}} =
\left(1-\frac1{p^s}\right) \left(1-\frac1{p^{s-1}}\right)^{-1}.
\end{equation}

Here the left-hand side of \eqref{varphi_p} can be computed
directly. However, it is more convenient to use the well-known representation of the Dirichlet series of $\varphi$ (similar
considerations are valid for other classical multiplicative function, as well). Namely,
\begin{equation} \label{111}
\sum_{n=1}^{\infty} \frac{\varphi(n)}{n^s} =
\frac{\zeta(s-1)}{\zeta(s)} = \prod_{p\in \P}
\left(1-\frac1{p^s}\right) \left(1-\frac1{p^{s-1}}\right)^{-1},
\end{equation}
and using the Euler product,
\begin{equation} \label{222}
\sum_{n=1}^{\infty} \frac{\varphi(n)}{n^s} = \prod_{p\in \P}
\sum_{\nu=0}^{\infty} \frac{\varphi(p^\nu)}{p^{\nu s}}.
\end{equation}

Now a quick look at \eqref{111} and \eqref{222} gives \eqref{varphi_p}.
We deduce from Proposition \ref{prop_1} that
\begin{equation} \label{D_altern_varphi_first}
D_{\altern}(\varphi,s) = \frac{\zeta(s-1)}{\zeta(s)} \left( 2
\left(1-\frac1{2^s}\right)^{-1} \left(1-\frac1{2^{s-1}} \right)
-1\right),
\end{equation}
which can be written as \eqref{D_altern_varphi}.

Note that the function $n\mapsto (-1)^{n-1}$ is multiplicative. Therefore, it is
possible to give a direct proof of \eqref{D_altern_varphi_first}
(and of similar formulas, where $\varphi$ is replaced by another multiplicative
function) using Euler products:
\begin{equation*}
\sum_{n=1}^{\infty} \frac{(-1)^{n-1}\varphi(n)}{n^s} =
\left(1-\sum_{\nu=1}^{\infty} \frac{\varphi(2^{\nu})}{2^{\nu s}}
\right) \prod_{\substack{p\in \P\\ p>2}} \left(1 + \sum_{\nu=1}^{\infty}
\frac{\varphi(p^{\nu})}{p^{\nu s}} \right),
\end{equation*}
but computations are simpler by the previous approach.

\subsection{Mean values and alternating sums}

Let $f$ be a complex-valued arithmetic function. The (asymptotic) mean value of $f$ is
\begin{equation*}
M(f):=\lim_{x\to \infty} \frac1{x} \sum_{n\le x} f(n),
\end{equation*}
provided that this limit exists. Let
\begin{equation*}
M_{\altern}(f):=\lim_{x\to \infty} \frac1{x} \sum_{n\le x} (-1)^{n-1} f(n)
\end{equation*}
denote the mean value of the function $n\mapsto (-1)^{n-1} f(n)$ (if it exists).

\begin{proposition} \label{prop_mean} Assume that $f$ is a multiplicative function  and
\begin{equation} \label{cond}
\sum_{p\in \P} \frac{|f(p)-1|}{p} < \infty, \qquad \sum_{p\in \P} \sum_{\nu=2}^{\infty} \frac{|f(p^\nu)|}{p^\nu} < \infty.
\end{equation}

Then there exists
\begin{equation*}
M(f) = \prod_{p\in \P} \left(1-\frac1{p}\right) \sum_{\nu=0}^{\infty}
\frac{f(p^\nu)}{p^\nu}.
\end{equation*}

Furthermore, if $\sum_{\nu=0}^{\infty} \frac{f(2^\nu)}{2^\nu} \ne 0$, then there exists
\begin{equation} \label{nonzero}
M_{\altern}(f) = M(f) \left(2\left( \sum_{\nu=0}^{\infty}
\frac{f(2^\nu)}{2^\nu}\right)^{-1}-1\right),
\end{equation}
and if $\sum_{\nu=0}^{\infty} \frac{f(2^\nu)}{2^\nu}=0$, then $M(f)=0$ and
there exists
\begin{equation} \label{zero}
M_{\altern}(f) = \prod_{\substack{p\in \P\\p>2}}
\left(1-\frac1{p}\right) \sum_{\nu=0}^{\infty}
\frac{f(p^\nu)}{p^\nu}.
\end{equation}
\end{proposition}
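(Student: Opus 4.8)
The plan is to prove the first assertion by the standard convolution argument and then to obtain the two formulas for $M_{\altern}(f)$ by applying that assertion to a suitable auxiliary multiplicative function. For the statement about $M(f)$, I would write $f = \mathbf{1} * g$, where $\mathbf{1}$ denotes the constant function $1$ and $g = f * \mu$ is multiplicative, so that $g(p) = f(p) - 1$ and $g(p^\nu) = f(p^\nu) - f(p^{\nu-1})$ for $\nu \ge 1$. A short estimate shows that hypothesis \eqref{cond} forces $\sum_{p \in \P} \sum_{\nu \ge 1} |g(p^\nu)|/p^\nu < \infty$, and hence, by multiplicativity of $g$, that $\sum_{n=1}^{\infty} |g(n)|/n < \infty$; Kronecker's lemma then gives $\sum_{n \le x} |g(n)| = o(x)$. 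Writing
\begin{equation*}
\sum_{n \le x} f(n) = \sum_{d \le x} g(d) \left\lfloor \frac{x}{d} \right\rfloor = x \sum_{d \le x} \frac{g(d)}{d} + O\Bigl( \sum_{d \le x} |g(d)| \Bigr),
\end{equation*}
dividing by $x$ and letting $x \to \infty$ yields $M(f) = \sum_{n=1}^{\infty} g(n)/n = \prod_{p \in \P} \sum_{\nu \ge 0} g(p^\nu)/p^\nu$, the Euler product being legitimate by absolute convergence. Evaluating the Bell-series identity $\sum_{\nu \ge 0} f(p^\nu) t^\nu = (1-t)^{-1} \sum_{\nu \ge 0} g(p^\nu) t^\nu$ at $t = 1/p$ turns each local factor into $(1 - 1/p) \sum_{\nu \ge 0} f(p^\nu)/p^\nu$, which is the claimed formula.

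For the two formulas for $M_{\altern}(f)$, the key observation (already recorded after Proposition \ref{prop_1}) is that $n \mapsto (-1)^{n-1}$ is multiplicative; hence $F(n) := (-1)^{n-1} f(n)$ is multiplicative and $M_{\altern}(f) = M(F)$. Since $F(p^\nu) = f(p^\nu)$ for every odd prime $p$ and every $\nu \ge 0$, while the prime $2$ contributes only the single extra term $|F(2) - 1|/2 = |f(2) + 1|/2$ to the first sum in \eqref{cond}, the function $F$ again satisfies \eqref{cond}. Applying the first assertion to $F$, and using $\sum_{\nu \ge 0} F(2^\nu)/2^\nu = 1 - \sum_{\nu \ge 1} f(2^\nu)/2^\nu = 2 - \sum_{\nu \ge 0} f(2^\nu)/2^\nu$, I obtain
\begin{equation*}
M_{\altern}(f) = \left( 1 - \frac12 \right) \left( 2 - \sum_{\nu \ge 0} \frac{f(2^\nu)}{2^\nu} \right) \prod_{\substack{p \in \P \\ p > 2}} \left( 1 - \frac1p \right) \sum_{\nu \ge 0} \frac{f(p^\nu)}{p^\nu}.
\end{equation*}

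It then remains only to rewrite this. If $\sum_{\nu \ge 0} f(2^\nu)/2^\nu \neq 0$, one factors $M(f) = (1 - \tfrac12)\bigl(\sum_{\nu \ge 0} f(2^\nu)/2^\nu\bigr) \prod_{p > 2} (1 - \tfrac1p) \sum_{\nu \ge 0} f(p^\nu)/p^\nu$ and divides, reaching \eqref{nonzero}; if $\sum_{\nu \ge 0} f(2^\nu)/2^\nu = 0$, the $p = 2$ factor of $M(f)$ vanishes, so $M(f) = 0$, while the displayed formula above collapses to \eqref{zero}. I do not anticipate a real obstacle: this is a routine instance of the convolution (Wintner-type) method, and the only places calling for a little attention are the implication $\sum_n |g(n)|/n < \infty \Rightarrow \sum_{n \le x} |g(n)| = o(x)$ and the verification that \eqref{cond} is inherited by $F$, both of which are straightforward.
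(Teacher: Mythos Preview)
Your proposal is correct and follows essentially the same route as the paper: both establish the formula for $M(f)$ via Wintner's theorem and then apply that result to the multiplicative function $F(n)=(-1)^{n-1}f(n)$, reducing the two cases for $M_{\altern}(f)$ to the same algebraic manipulation you describe. The only difference is that the paper simply cites Schwarz--Spilker \cite[Cor.\ 2.3]{SchSpi1994} for the existence and product formula of $M(f)$, whereas you sketch the underlying convolution argument $f=\mathbf{1}*g$ yourself; this makes your write-up self-contained but does not change the strategy.
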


\begin{proof} The result for $M(f)$ is a version of Wintner's theorem for multiplicative functions. See Schwarz
and Spilker \cite[Cor.\ 2.3]{SchSpi1994}. It easy to check that
assuming \eqref{cond} for $f$, the same conditions hold for the
multiplicative function $n\mapsto (-1)^{n-1} f(n)$. We deduce that
$M_{\altern}(f)$ exists and it is
\begin{equation} \label{M_altern}
M_{\altern}(f) = \left(1-\frac1{2}\right)\left(1- \sum_{\nu=1}^{\infty} \frac{f(2^{\nu})}{2^{\nu}} \right)  \prod_{\substack{p\in \P\\ p>2}}
\left(1-\frac1{p}\right) \sum_{\nu=0}^{\infty} \frac{f(p^{\nu})}{p^{\nu}}.
\end{equation}

Now if $t:=\sum_{\nu=1}^{\infty} \frac{f(2^\nu)}{2^\nu}\ne -1$, then
\begin{equation*}
M_{\altern}(f)= \frac{1-t}{1+t} \prod_{p\in \P} \left(1-\frac1{p}\right) \sum_{\nu=0}^{\infty} \frac{f(p^{\nu})}{p^{\nu}},
\end{equation*}
which is \eqref{nonzero}. If $t=-1$, then \eqref{M_altern} gives \eqref{zero}.
\end{proof}

\begin{application}  Let $f$ be multiplicative such that $f(p)=1$, $f(p^2)=-6$,
$f(p^{\nu})=0$ for every $p\in \P$ and every $\nu \ge 3$. Here
$\sum_{\nu=0}^{\infty} \frac{f(2^\nu)}{2^\nu} =1+\frac1{2}-\frac{6}{4}=0$. Using
Proposition \ref{prop_mean} we deduce that $M(f)=0$ and there exists
\begin{equation*}
\lim_{x\to \infty} \frac1{x} \sum_{n\le x} (-1)^{n-1} f(n)= \prod_{\substack{p\in \P \\ p>2}}
\left(1-\frac1{p}\right) \left(1+\frac1{p}-\frac{6}{p^2} \right) = \prod_{\substack{p\in \P \\ p>2}}
\left(1-\frac{7}{p^2}+\frac{6}{p^3} \right) \ne 0.
\end{equation*} 
\end{application}

The following result is similar. Let
\begin{equation*}
L(f):=\lim_{x\to \infty} \frac1{\log x} \sum_{n\le x} \frac{f(n)}{n}
\end{equation*}
denote the logarithmic mean value of $f$ and, assuming that $f$ is non-vanishing, let
\begin{equation*}
\overline{L}(f):= \lim_{x\to \infty} \frac1{\log x} \sum_{n\le x} \frac1{f(n)},
\end{equation*}
\begin{equation*}
\overline{L}_{\altern}(f):= \lim_{x\to \infty} \frac1{\log x} \sum_{n\le x} (-1)^{n-1} \frac1{f(n)},
\end{equation*}
provided that the limits exist.

\begin{proposition} \label{prop_mean_2} Assume that $f$ is a non-vanishing multiplicative function  and
\begin{equation} \label{cond_2}
\sum_{p\in \P} \left| \frac1{f(p)}-\frac1{p}\right| < \infty, \qquad \sum_{p\in \P} \sum_{\nu=2}^{\infty} \frac1{|f(p^\nu)|} < \infty.
\end{equation}

Then there exists
\begin{equation*}
\overline{L}(f) = \prod_{p\in \P} \left(1-\frac1{p}\right) \sum_{\nu=0}^{\infty}
\frac1{f(p^\nu)}.
\end{equation*}

Furthermore, if $\sum_{\nu=0}^{\infty} \frac1{f(2^\nu)}\ne 0$, then there exists
\begin{equation*}
\overline{L}_{\altern}(f) = \overline{L}(f) \left(2\left( \sum_{\nu=0}^{\infty}
\frac1{f(2^\nu)}\right)^{-1}-1\right),
\end{equation*}
and if $\sum_{\nu=0}^{\infty} \frac1{f(2^\nu)}=0$, then $\overline{L}(f)=0$ and
there exists
\begin{equation*}
\overline{L}_{\altern}(f) = \prod_{\substack{p\in \P \\ p>2}}
\left(1-\frac1{p}\right) \sum_{\nu=0}^{\infty}
\frac1{f(p^\nu)}.
\end{equation*}
\end{proposition}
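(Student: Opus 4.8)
The plan is to deduce this from Proposition~\ref{prop_mean} by introducing the auxiliary multiplicative function $F$ defined by $F(n):=n/f(n)$, which is well defined since $f$ is non-vanishing and multiplicative. Everything rests on a short chain of elementary identities at prime powers: $\frac{F(p^\nu)}{p^\nu}=\frac1{f(p^\nu)}$, hence $\frac{|F(p)-1|}{p}=\bigl|\frac1{f(p)}-\frac1p\bigr|$ and $\frac{|F(p^\nu)|}{p^\nu}=\frac1{|f(p^\nu)|}$. Consequently hypothesis~\eqref{cond_2} for $f$ is nothing but hypothesis~\eqref{cond} for $F$, the Euler products $\prod_p(1-\tfrac1p)\sum_\nu\frac{F(p^\nu)}{p^\nu}$ and $\prod_{p>2}(1-\tfrac1p)\sum_\nu\frac{F(p^\nu)}{p^\nu}$ coincide with those claimed in the statement, and the dichotomy $\sum_\nu\frac{F(2^\nu)}{2^\nu}\ne 0$ versus $=0$ is the dichotomy $\sum_\nu\frac1{f(2^\nu)}\ne 0$ versus $=0$.

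Second, I would record the identities
\[
\sum_{n\le x}\frac1{f(n)}=\sum_{n\le x}\frac{F(n)}{n},\qquad \sum_{n\le x}(-1)^{n-1}\frac1{f(n)}=\sum_{n\le x}(-1)^{n-1}\frac{F(n)}{n},
\]
so that $\overline L(f)$ and $\overline L_{\altern}(f)$ are exactly the \emph{logarithmic} means of $F$ and of $n\mapsto(-1)^{n-1}F(n)$. The bridge to Proposition~\ref{prop_mean}, which concerns ordinary (asymptotic) means, is the standard partial summation observation: if $g$ is any arithmetic function with $\sum_{n\le t}g(n)=c\,t+o(t)$, then
\[
\sum_{n\le x}\frac{g(n)}{n}=\frac1x\sum_{n\le x}g(n)+\int_1^x\frac1{t^2}\sum_{n\le t}g(n)\,dt=c\log x+o(\log x),
\]
the first term being bounded and the integral equal to $c\log x+o(\log x)$. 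Hence, whenever the asymptotic mean of $g$ exists, the logarithmic mean of $g$ exists and has the same value.

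Applying this with $g=F$ and with $g(n)=(-1)^{n-1}F(n)$ reduces the proposition to computing $M(F)$ and $M_{\altern}(F)$. By Wintner's theorem (Schwarz--Spilker, in the form already used for Proposition~\ref{prop_mean}) the verified hypotheses give $M(F)=\prod_p(1-\tfrac1p)\sum_\nu\frac{F(p^\nu)}{p^\nu}=\prod_p(1-\tfrac1p)\sum_\nu\frac1{f(p^\nu)}$; and since \eqref{cond} holds for $F$, Proposition~\ref{prop_mean} applies to $F$ and yields $M_{\altern}(F)=M(F)\bigl(2(\sum_\nu\frac1{f(2^\nu)})^{-1}-1\bigr)$ when $\sum_\nu\frac1{f(2^\nu)}\ne 0$, and $M(F)=0$ together with $M_{\altern}(F)=\prod_{p>2}(1-\tfrac1p)\sum_\nu\frac1{f(p^\nu)}$ otherwise. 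Reading these back through the two identities above gives the three assertions.

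I do not anticipate a genuine obstacle. The only step needing a moment of care is the partial summation lemma linking $L$ and $M$, specifically checking that the $o(t)$ remainder in $\sum_{n\le t}g(n)$, integrated against $t^{-2}$, really produces an $o(\log x)$ error and not an $O(\log x)$ one; this is handled by splitting the integral at a threshold beyond which the remainder is at most $\varepsilon t$ in absolute value. Everything else is routine bookkeeping with Euler products.
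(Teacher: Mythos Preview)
Your proposal is correct and follows essentially the same approach as the paper: define $F(n)=n/f(n)$, apply Proposition~\ref{prop_mean} to $F$, and pass from ordinary means to logarithmic means. The only difference is cosmetic---the paper cites the implication ``$M(g)$ exists $\Rightarrow$ $L(g)$ exists and equals $M(g)$'' as a known fact (Hildebrand~\cite[Thm.\ 2.13]{Hil2013}), whereas you supply the partial-summation proof directly.
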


\begin{proof} Apply Proposition \ref{prop_mean} for $f(n):=\frac{n}{f(n)}$ and use the following property: If the mean value
$M(f)$ exists, then the logarithmic mean value $L(f)$ exists as
well, and is equal to $M(f)$. See Hildebrand \cite[Thm.\
2.13]{Hil2013}.
\end{proof}

\begin{application} It follows from Proposition \ref{prop_mean_2} that
\begin{equation*}
\overline{L}(\varphi) = \lim_{x\to \infty} \frac1{\log x} \sum_{n\le x} \frac1{\varphi(n)} =
\prod_{p\in \P} \left(1-\frac1{p} \right) \sum_{\nu=0}^{\infty} \frac1{\varphi(p^\nu)}= \frac{\zeta(2)\zeta(3)}{\zeta(6)},
\end{equation*}
which is well-known, and
\begin{equation*}
\lim_{x\to \infty} \frac1{\log x} \sum_{n\le x} (-1)^{n-1} \frac1{\varphi(n)} = \overline{L}(\varphi) \left(2\left( \sum_{\nu=0}^{\infty}
\frac1{\varphi(2^\nu)}\right)^{-1}-1\right)= -\frac{\zeta(2)\zeta(3)}{3\zeta(6)},
\end{equation*}
obtained by Bordell\`{e}s and Cloitre \cite{BorClo2013}. Conditions
\eqref{cond_2} were refined in \cite{BorClo2013} to deduce
asymptotic formulas with error terms for alternating sums of
reciprocals of a class of multiplicative arithmetic functions,
including Euler's totient function.
\end{application}

\subsection{Method to obtain asymptotic formulas} \label{Subsect_Method}

Assume that $f$ is a nonzero complex-valued multiplicative function.
Consider the formal power series
\begin{equation*}
S_f(x):=\sum_{\nu=0}^{\infty} a_\nu x^\nu,
\end{equation*}
where $a_\nu=f(2^\nu)$ ($\nu \ge 0$), $a_0=f(1)=1$. Note that
$S_f(x)$ is the Bell series of the function $f$ for the prime $p=2$.
See, e.g., Apostol \cite[Ch.\ 2]{Apo1976}. Let
\begin{equation*}
\overline{S}_f(x): =\sum_{\nu=0}^{\infty} b_\nu x^\nu
\end{equation*}
be its formal reciprocal power series. Here the coefficients $b_\nu$
are given by $b_0=1$ and $\sum_{j=0}^\nu a_j b_{\nu-j}=0$ ($\nu\ge
1$). If both series $S_f(x)$ and $\overline{S}_f(x)$ converge for an $x\in \C$, then $S_f(x) \overline{S}_f(x)
=1$. In particular, if $r_f$ and $\overline{r}_f$ are the radii of convergence of
$S_f(x)$, respectively $\overline{S}_f(x)$, then $S_f(x) \overline{S}_f(x)
=1$ for every $x\in \C$ such that $|x|<\min (r_f,\overline{r}_f)$.

It follows from \eqref{form_Dir_altern} that the convolution identity
\begin{equation} \label{convo}
(-1)^{n-1} f(n) = \sum_{dj =n} h_f(d) f(j) \quad (n\ge 1)
\end{equation}
holds, where the function $h_f$ is multiplicative, $h_f(p^\nu)=0$ if
$p>2$, $\nu \ge 1$ and $h_f(2^\nu)=2b_{\nu}$ ($\nu\ge 1$),
$h_f(1)=2b_0-1=1$.

Therefore, by the convolution method,
\begin{equation} \label{sum}
\sum_{n\le x} (-1)^{n-1} f(n) = \sum_{d\le x} h_f(d) \sum_{j \le x/d} f(j),
\end{equation}
which leads to a good estimate for \eqref{altern_sum_f} if an asymptotic
formula for $\sum_{n\le x} f(n)$ is known and if the coefficients
$b_\nu$ of above can be well estimated. Note that, according to
\eqref{convo} and \eqref{form_Dir_altern},
\begin{equation} \label{const}
\sum_{n=1}^{\infty} \frac{h_f(n)}{n^s} = \frac{2}{S_f(1/2^s)}-1,
\end{equation}
provided that both $S_f(1/2^s)$ and  $\overline{S}_f(1/2^s)$ converge. By differentiating,
\begin{equation} \label{const_diff}
\sum_{n=1}^{\infty} \frac{h_f(n)\log n}{n^s} = -\frac{\log 2}{2^{s-1}}\cdot \frac{S_f'(1/2^s)}{S_f(1/2^s)^2},
\end{equation}
assuming that $|1/2^s|<\min (r_f,\overline{r}_f)$. Identities \eqref{const} and \eqref{const_diff} will be used in
applications.

\subsection{Two general asymptotic formulas}

We prove two general results that will be applied for several
special functions in Section \ref{Section_3}.

\begin{proposition} \label{Prop_f} Let $f$ be a multiplicative function. Assume that

(i) there exists a constant $C_f$ such that
\begin{equation*}
\sum_{n\le x} f(n) = C_f x^2 + O\left( x R_f(x) \right),
\end{equation*}
where $1\ll R_f(x)=o(x)$ as $x\to \infty$, and $R_f(x)$ is nondecreasing;

(ii) $S_f(1/4)$ converges;

(iii) the sequence $(b_\nu)_{\nu \ge 0}$ of coefficients of the reciprocal power series
$\overline{S}_f(x)$ is bounded.

Then
\begin{equation*}
\sum_{n \le x} (-1)^{n-1} f(n) =  C_f \left(\frac2{S_f(1/4)} -1
\right) x^2 + O\left(x R_f(x) \right).
\end{equation*}
\end{proposition}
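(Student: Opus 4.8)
The plan is to feed the known asymptotics for $\sum_{n\le x}f(n)$ into the convolution identity \eqref{sum}. Since $h_f$ is supported on the powers of $2$ (with $h_f(1)=1$ and $h_f(2^\nu)=2b_\nu$ for $\nu\ge 1$), the outer sum in \eqref{sum} runs only over $\nu$ with $2^\nu\le x$, so writing $F(y):=\sum_{j\le y}f(j)$ and $N:=\lfloor\log_2 x\rfloor$ one has $\sum_{n\le x}(-1)^{n-1}f(n)=\sum_{\nu=0}^{N}h_f(2^\nu)F(x/2^\nu)$. Substituting hypothesis (i), namely $F(x/2^\nu)=C_f x^2 4^{-\nu}+O\bigl((x/2^\nu)R_f(x/2^\nu)\bigr)$, I would split this into a main term $C_f x^2\sum_{\nu=0}^{N}h_f(2^\nu)4^{-\nu}$ and an error term, and estimate each.

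For the error term I would use hypothesis (iii): with $M:=\sup_\nu|b_\nu|<\infty$ one has $|h_f(2^\nu)|\le 2M$ for all $\nu$, and since $R_f$ is nondecreasing, $R_f(x/2^\nu)\le R_f(x)$; summing the geometric series $\sum_{\nu\ge 0}2^{-\nu}$ then bounds the error by $O(xR_f(x))$. For the main term I would complete the truncated sum to an infinite one: using $h_f(1)=2b_0-1$ gives $\sum_{\nu=0}^{N}h_f(2^\nu)4^{-\nu}=2\overline{S}_f(1/4)-1-2\sum_{\nu>N}b_\nu 4^{-\nu}$, and by (iii) the tail is $\ll 4^{-N}\ll x^{-2}$, so $C_f x^2$ times it is $O(1)$, harmlessly absorbed into $O(xR_f(x))$ since $R_f\gg 1$. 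This leaves $C_f\bigl(2\overline{S}_f(1/4)-1\bigr)x^2+O(xR_f(x))$.

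The last step, and the one that needs care, is to identify $\overline{S}_f(1/4)=1/S_f(1/4)$. One cannot simply quote the identity $S_f(x)\overline{S}_f(x)=1$ from Subsection \ref{Subsect_Method}, because (ii) only guarantees (possibly conditional) convergence of $S_f(1/4)$, so a priori $r_f$ could equal $1/4$ and $1/4$ would sit on the boundary of convergence. Instead I would invoke Mertens' theorem on Cauchy products: by (iii) the series $\overline{S}_f(x)=\sum b_\nu x^\nu$ converges absolutely at $x=1/4$ (its radius of convergence is $\ge 1$), while by (ii) the series $S_f(1/4)=\sum a_\nu 4^{-\nu}$ converges, so their product equals the Cauchy product $\sum_{n\ge 0}\bigl(\sum_{j=0}^{n}a_jb_{n-j}\bigr)4^{-n}$, which collapses to $1$ by the defining recursion for the $b_\nu$. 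In particular $S_f(1/4)\ne 0$, and substituting $\overline{S}_f(1/4)=1/S_f(1/4)$ into the expression from the previous paragraph yields the claimed formula. The only genuine obstacle is this boundary subtlety, which is precisely what hypothesis (iii) is there to repair; everything else is routine estimation of geometric tails.
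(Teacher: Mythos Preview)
Your proof is correct and follows essentially the same route as the paper: apply the convolution identity \eqref{sum}, substitute hypothesis (i), bound the error by the geometric series using (iii) and the monotonicity of $R_f$, and complete the truncated main sum with a tail of size $O(1)$. The only difference is cosmetic: where the paper simply invokes the statement from Section~\ref{Subsect_Method} that $S_f(x)\overline{S}_f(x)=1$ whenever both series converge (this is Abel's theorem on Cauchy products, since the Cauchy product here is identically $1$), you instead appeal to Mertens' theorem; both justifications are valid, and your caution about the possible boundary case is well-placed even if the paper's earlier general statement already covers it.
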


\begin{proof} According to \eqref{sum},
\begin{equation*}
\sum_{n\le x} (-1)^{n-1} f(n) = \sum_{d\le x} h_f(d) \left(C_f
\frac{x^2}{d^2} + O\left(\frac{x}{d} R_f(x/d) \right) \right)
\end{equation*}
\begin{equation*}
= C_f x^2 \sum_{d\le x} \frac{h_f(d)}{d^2}+  O \left(x R_f(x)
\sum_{d\le x} \frac{|h_f(d)|}{d} \right).
\end{equation*}

Since the sequence $(b_\nu)_{\nu \ge 0}$ is bounded, the function
$h_f$ is bounded. Moreover, the sum
\begin{equation*}
\sum_{d\le x} \frac{|h_f(d)|}{d} = \sum_{d=2^{\nu} \le x}
\frac{|h_f(2^{\nu})|}{2^{\nu}} \ll \sum_{2^{\nu}\le x} \frac{|b_{\nu}|}{2^{\nu}}
\end{equation*}
is bounded, as well. Note that $S_f(1/4)$ and $\overline{S}_f(1/4)$ both converge by conditions (ii) and (iii).
We deduce, by using \eqref{const} for $s=2$, that
\begin{equation*}
\sum_{n\le x} (-1)^{n-1} f(n) = C_f x^2 \sum_{d=1}^{\infty}
\frac{h_f(d)}{d^2} + O\left( x^2 \sum_{d>x} \frac1{d^2}\right) + O
\left(x R_f(x) \right)
\end{equation*}
\begin{equation*}
=C_f x^2 \left(\frac{2}{S_f(1/4)}-1 \right) + O \left(x R_f(x) \right).
\end{equation*}
\end{proof}

\begin{proposition} \label{Prop_1_per_f} Let $f$ be a nonvanishing multiplicative function. Assume that

(i) there exist constants $D_f$ and $E_f$ such that
\begin{equation} \label{asympt_D_f_E_f}
\sum_{n\le x} \frac1{f(n)} = D_f (\log x + E_f) + O\left( x^{-1} R_{1/f}(x) \right),
\end{equation}
where $1\ll R_{1/f}(x)=o(x)$ as $x\to \infty$, and $R_{1/f}(x)$ is nondecreasing;

(ii) the radius of convergence of the series $S_{1/f}(x)$ is $r_{1/f}>1$;

(iii) the coefficients $b_\nu$ of the reciprocal power series $\overline{S}_{1/f}(x)$ satisfy $b_{\nu} \ll M^\nu$ as $\nu \to \infty$,
where $0<M<1$ is a real number.

Then
\begin{equation} \label{asympt_altern_1_per_f}
\sum_{n \le x} (-1)^{n-1} \frac1{f(n)} = D_f \left( \left(\frac{2}{S_{1/f}(1)}-1\right) (\log x + E_f)
+ 2(\log 2)\frac{S'_{1/f}(1)}{S_{1/f}(1)^2}\right) + O\left(T_{1/f}(x) \right),
\end{equation}
where
\begin{equation} \label{error_M}
T_{1/f}(x) = \begin{cases} x^{-1} R_{1/f}(x), & \ \text{ if } 0<M<\frac1{2}; \\
x^{-1} R_{1/f}(x) \log x, & \ \text{ if } M=\frac1{2}; \\
x^{\log M/\log 2} \max(\log x, R_{1/f}(x)), & \ \text{ if } \frac1{2}<M<1.
\end{cases}
\end{equation}
\end{proposition}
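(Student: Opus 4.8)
The plan is to run the convolution identity \eqref{sum} exactly as in the proof of Proposition \ref{Prop_f}, but now with the function $1/f$ in place of $f$, and then to carefully track the contribution of the tail of the $h_{1/f}$-series, since unlike the $x^2$-case the main term here is only of size $\log x$ and the ``tail'' $\sum_{d>x}$ of the Dirichlet series $\sum h_{1/f}(d)/d$ is only conditionally small. First I would write, using \eqref{sum} for $1/f$ and hypothesis (i),
\begin{equation*}
\sum_{n\le x} (-1)^{n-1} \frac1{f(n)} = \sum_{d\le x} h_{1/f}(d) \left( D_f\bigl(\log(x/d)+E_f\bigr) + O\bigl( (d/x) R_{1/f}(x/d) \bigr) \right).
\end{equation*}
Splitting $\log(x/d) = \log x - \log d$, the main term becomes
\begin{equation*}
D_f (\log x + E_f) \sum_{d\le x} \frac{h_{1/f}(d)}{d} - D_f \sum_{d\le x} \frac{h_{1/f}(d)\log d}{d}.
\end{equation*}
By hypotheses (ii) and (iii) the series $S_{1/f}(x)$ and $\overline S_{1/f}(x)$ both converge at $x=1$ (indeed in a disc of radius $>1$), so \eqref{const} at $s=1$ gives $\sum_{d\ge 1} h_{1/f}(d)/d = 2/S_{1/f}(1) - 1$, and \eqref{const_diff} at $s=1$ gives $\sum_{d\ge 1} h_{1/f}(d)\log d / d = -(\log 2)\, S'_{1/f}(1)/S_{1/f}(1)^2$. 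Substituting these completes the infinite sums to produce precisely the two main terms displayed in \eqref{asympt_altern_1_per_f}.

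Next comes the error bookkeeping, which is the heart of the matter. Since $d = 2^\nu$ whenever $h_{1/f}(d)\neq 0$, and $h_{1/f}(2^\nu) = 2 b_\nu \ll M^\nu$ by (iii), we have $|h_{1/f}(2^\nu)|/2^\nu \ll (M/2)^\nu$ and $|h_{1/f}(2^\nu)| \ll M^\nu$. Three kinds of error terms must be controlled: (a) the tails $D_f(\log x+E_f)\sum_{2^\nu > x} h_{1/f}(2^\nu)/2^\nu$ and $D_f\sum_{2^\nu>x} h_{1/f}(2^\nu)\log(2^\nu)/2^\nu$ left over from completing the main sums — these are bounded by $(\log x)\sum_{2^\nu>x}(M/2)^\nu \ll (\log x)\, (M/2)^{\log_2 x} = (\log x)\, x^{\log(M/2)/\log 2}$, up to the extra $\nu$ factor in the second one which only changes the logarithmic factor; (b) the ``error of the error,'' i.e. $\sum_{d\le x} |h_{1/f}(d)|\,(d/x)\,R_{1/f}(x/d) \ll x^{-1} R_{1/f}(x)\sum_{2^\nu\le x} |h_{1/f}(2^\nu)|\, 2^\nu \ll x^{-1}R_{1/f}(x)\sum_{2^\nu\le x}(2M)^\nu$, where I used $R_{1/f}$ nondecreasing; and the behaviour of $\sum_{2^\nu\le x}(2M)^\nu$ is exactly what produces the three cases in \eqref{error_M}: it is $O(1)$ when $2M<1$, it is $O(\log x)$ when $2M=1$, and it is $\ll (2M)^{\log_2 x} = x^{\log(2M)/\log 2} = 2\, x^{\log M/\log 2}$ when $2M>1$. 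In the first two cases this dominates the tail contribution (a), while in the third case one compares $x^{\log M/\log 2}\log x$ against $x^{\log M/\log 2}R_{1/f}(x)$ and takes the max, which is exactly the stated $T_{1/f}(x)$.

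I would then just assemble the pieces: in the range $0<M<1/2$ all error contributions are $O(x^{-1}R_{1/f}(x))$; in the range $M=1/2$ the dominant one is $O(x^{-1}R_{1/f}(x)\log x)$ (the $\log x$ coming from $\sum_{2^\nu\le x}1$); and in the range $1/2<M<1$ the dominant contributions are $x^{\log M/\log 2}$ times either $\log x$ (from the leftover tails) or $R_{1/f}(x)$ (from the error-of-error), giving $O(x^{\log M/\log 2}\max(\log x, R_{1/f}(x)))$. The main obstacle I anticipate is purely the careful case analysis of which of the several competing error terms dominates in each $M$-regime — in particular making sure that in the regime $1/2<M<1$ the leftover tail terms from completing the main sum (which carry a $\log x$) are not swamped by, nor do they swamp, the error-of-error term (which carries $R_{1/f}(x)$), and that the polynomial exponent $\log M/\log 2$ is indeed negative so that this is genuinely an error term; everything else is the routine geometric-series estimation indicated above.
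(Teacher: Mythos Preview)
Your overall strategy is exactly the paper's, but there is a bookkeeping slip that propagates through the whole computation. From your first displayed line
\[
\sum_{n\le x} (-1)^{n-1} \frac1{f(n)} = \sum_{d\le x} h_{1/f}(d)\Bigl(D_f(\log(x/d)+E_f) + O((d/x)R_{1/f}(x/d))\Bigr),
\]
splitting $\log(x/d)=\log x-\log d$ gives
\[
D_f(\log x+E_f)\sum_{d\le x} h_{1/f}(d) \;-\; D_f\sum_{d\le x} h_{1/f}(d)\log d,
\]
\emph{without} any $1/d$: the identity \eqref{sum} already carries no weight $1/d$. Consequently the correct evaluation point in \eqref{const} and \eqref{const_diff} is $s=0$ (so that $1/2^s=1$), not $s=1$; at $s=0$ one gets $\sum_d h_{1/f}(d)=2/S_{1/f}(1)-1$ and $\sum_d h_{1/f}(d)\log d=-2(\log 2)\,S'_{1/f}(1)/S_{1/f}(1)^2$. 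Note the factor $2$ in the derivative term (coming from $1/2^{s-1}$ at $s=0$): your version of the second main constant is off by exactly this factor, and your claim that ``\eqref{const} at $s=1$ gives $\sum h_{1/f}(d)/d=2/S_{1/f}(1)-1$'' is false on its own, since $S_{1/f}(1/2^1)=S_{1/f}(1/2)$.

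The same slip affects your error analysis. The genuine tails are
\[
\sum_{2^\nu>x}|h_{1/f}(2^\nu)|\ \ll\ x^{\log M/\log 2},\qquad
\sum_{2^\nu>x}|h_{1/f}(2^\nu)|\,\nu\ \ll\ x^{\log M/\log 2}\log x,
\]
not $\sum(M/2)^\nu$; this is precisely what supplies the $\log x$ half of $\max(\log x,R_{1/f}(x))$ when $1/2<M<1$. (In your write-up you quote $x^{\log M/\log 2}\log x$ in the final comparison, but that is \emph{not} what your own tail bound $(\log x)\sum(M/2)^\nu$ gives.) Also, $x^{\log(2M)/\log 2}=x^{1+\log M/\log 2}=x\cdot x^{\log M/\log 2}$, not $2\,x^{\log M/\log 2}$. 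Once you drop the spurious $1/d$, evaluate at $s=0$, and redo the three geometric sums, your argument coincides with the paper's.
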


\begin{proof} According to \eqref{sum} we deduce that
\begin{align*}
\sum_{n\le x} (-1)^{n-1} \frac1{f(n)} 
&= \sum_{d\le x} h_{1/f}(d) \sum_{j\le x/d} \frac1{f(j)} \\
&= \sum_{d\le x} h_{1/f}(d) \left( D_f \left( \log
\frac{x}{d} + E_f \right) + O\left( (x/d)^{-1} R_{1/f}(x/d) \right) \right) \\
&= D_f (\log x + E_f) \sum_{d\le x} h_{1/f}(d) - D_f
\sum_{d\le x} h_{1/f}(d) \log d \\
&+ O \left(x^{-1} R_{1/f}(x) \sum_{d\le x} d |h_{1/f}(d)| \right).
\end{align*}

That is,
\begin{equation*}
\sum_{n\le x} (-1)^{n-1} \frac1{f(n)} = D_f (\log x+ E_f)
\sum_{d=1}^{\infty} h_{1/f}(d) + O\left(\log x \sum_{d>x}
|h_{1/f}(d)| \right)
\end{equation*}
\begin{equation} \label{summ}
 - D_f \sum_{d=1}^{\infty} h_{1/f}(d)
\log d  + O\left( \sum_{d>x} |h_{1/f}(d)| \log d \right)
+ O \left(x^{-1} R_{1/f}(x) \sum_{d\le x} d |h_{1/f}(d)|\right).
\end{equation}

Note that $\min(r_{1/f},\overline{r}_{1/f})>1$ by conditions (ii) and (iii). By using \eqref{const} and \eqref{const_diff} for $s=0$,
\begin{gather*}
\sum_{d=1}^{\infty} h_{1/f}(d) = \frac{2}{S_{1/f}(1)}-1, \\
\sum_{d=1}^{\infty} h_{1/f}(d)\log d = -2(\log 2) \frac{S'_{1/f}(1)}{S_{1/f}(1)^2}.
\end{gather*}

Furthermore,
\begin{gather*}
\sum_{d>x} |h_{1/f}(d)| = \sum_{d=2^{\nu}>x} |h_{1/f}(2^{\nu})|
\ll \sum_{2^{\nu}>x} |b_{\nu}| \ll \sum_{2^{\nu}>x} M^{\nu} \ll x^{\log M/\log 2}, \\
\sum_{d>x} |h_{1/f}(d)| \log d \ll \sum_{2^{\nu}>x}  \nu |b_{\nu}| \ll \sum_{2^{\nu}>x} \nu M^{\nu}
\ll x^{\log M/\log 2} \log x, \\
\sum_{d\le x} d |h_{1/f}(d)| = \sum_{2^{\nu}\le x} 2^{\nu} |b_{\nu}|
\ll  \sum_{\nu \le \log x/\log 2} (2M)^{\nu},
\end{gather*}
where the latter sum is bounded if $0<M<1/2$, it is $\ll \log x$ if $M=1/2$, and is $\ll x^{1+ \log M/\log 2}$ if $1/2<M<1$.

Inserting these into \eqref{summ}, the proof is complete.
\end{proof}

\section{Estimates on coefficients of reciprocal power series}
\label{Estimates}

As mentioned in Section \ref{Subsect_Method}, in order to deduce sharp error terms for
alternating sums \eqref{altern_sum_f} we need good estimates for the
coefficients $b_{\nu}$ of the power series $\overline{S}_f(x)$.

\subsection{Theorem of Kaluza}

In many (nontrivial) cases the next result can be used.

\begin{lemma} \label{lemma_Kaluza} Let $\sum_{\nu =0}^{\infty} a_\nu x^\nu$
be a power series such that $a_\nu >0$ {\rm (}$\nu \ge 0${\rm )}
and the sequence $(a_\nu)_{\nu \ge 0}$ is log-convex, that is
$a_{\nu}^2 \le a_{\nu-1} a_{\nu+1}$ {\rm (}$\nu \ge 1${\rm )}. Then
for the coefficients $b_\nu$ of the (formal) reciprocal power series
$\sum_{\nu =0}^{\infty} b_\nu x^\nu$ one has  $b_0 = 1/a_0 > 0$ and
\begin{equation*}
- \frac1{a_0^2} a_{\nu}\le b_\nu \le 0 \quad \text{ for all $\nu \ge 1$}.
\end{equation*}
\end{lemma}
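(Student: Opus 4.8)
\emph{Proof plan.} The plan is first to normalize, then to reduce both inequalities to a single positivity statement about the coefficients of the reciprocal series, and to prove that statement by an induction which applies log-convexity in a ``collapsing'' form. Without loss of generality I may assume $a_0=1$: setting $\tilde a_\nu:=a_\nu/a_0$ preserves positivity and the log-convexity $\tilde a_\nu^2\le \tilde a_{\nu-1}\tilde a_{\nu+1}$, while the reciprocal series of $\sum_\nu \tilde a_\nu x^\nu$ has coefficients $\tilde b_\nu=a_0 b_\nu$; hence proving $\tilde b_0=1$ and $-\tilde a_\nu\le \tilde b_\nu\le 0$ for $\nu\ge 1$ yields, after undoing the scaling, $b_0=1/a_0>0$ and $-a_\nu/a_0^2\le b_\nu\le 0$.

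So assume $a_0=1$. Comparing coefficients in $\bigl(\sum_\nu a_\nu x^\nu\bigr)\bigl(\sum_\nu b_\nu x^\nu\bigr)=1$ gives $b_0=1$ and $\sum_{j=0}^n a_{n-j}b_j=0$ for $n\ge 1$. Writing $\beta_j:=-b_j$, this is the renewal-type recursion
\[
a_n=\sum_{j=1}^n a_{n-j}\beta_j\qquad(n\ge 1),
\]
and in particular $\beta_1=a_1>0$. Everything then follows from the claim that $\beta_n\ge 0$ for all $n\ge 1$: granting this, each term of $a_n=\beta_n+\sum_{j=1}^{n-1}a_{n-j}\beta_j$ is nonnegative, so $0\le\beta_n\le a_n$, i.e.\ $-a_n\le b_n\le 0$, which is the assertion of the lemma.

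To obtain $\beta_n\ge 0$ I would use that log-convexity is equivalent to the ratios $a_{m+1}/a_m$ being nondecreasing in $m\ge 0$, which yields $a_{n+1-j}\le (a_{n+1}/a_n)\,a_{n-j}$ for all $0\le j\le n$. Now induct on $n$. The base case is $\beta_1=a_1\ge 0$. Assuming $\beta_1,\dots,\beta_n\ge 0$, the recursion at level $n+1$ gives $\beta_{n+1}=a_{n+1}-\sum_{j=1}^{n}a_{n+1-j}\beta_j$; bounding each $a_{n+1-j}$ as above (legitimate because $\beta_j\ge 0$) and then invoking the level-$n$ recursion $\sum_{j=1}^n a_{n-j}\beta_j=a_n$ gives $\sum_{j=1}^{n}a_{n+1-j}\beta_j\le (a_{n+1}/a_n)\sum_{j=1}^n a_{n-j}\beta_j=a_{n+1}$, hence $\beta_{n+1}\ge 0$, completing the induction.

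The crux is not a computational obstacle but the choice to apply log-convexity precisely in the form $a_{n+1-j}\le (a_{n+1}/a_n)a_{n-j}$: this is exactly what makes the convolution $\sum_{j=1}^{n}a_{n+1-j}\beta_j$ collapse back to $a_{n+1}$, whereas cruder estimates (for instance substituting $\beta_j\le a_j$ into this sum) lose far too much. All manipulations are carried out with formal power series, so no convergence hypothesis is needed, and the scaling step, the coefficient bookkeeping, and the base case are routine.
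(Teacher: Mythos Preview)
Your proof is correct. The paper's own argument is much terser: it simply cites Kaluza's theorem (and Carlitz's short proof of it) for the inequality $b_\nu\le 0$, and then obtains the lower bound directly from the recursion
\[
b_\nu=-\frac{a_\nu}{a_0^2}-\frac{1}{a_0}\sum_{j=1}^{\nu-1}a_j\,b_{\nu-j}\ \ge\ -\frac{a_\nu}{a_0^2},
\]
the last step using $a_j>0$ and $b_{\nu-j}\le 0$. After normalizing $a_0=1$, this is precisely your deduction of $\beta_n\le a_n$ from $\beta_j\ge 0$.

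The substantive difference is that you do not cite Kaluza but prove the nonpositivity $b_\nu\le 0$ from scratch, via the inductive ``collapsing'' argument that bounds $a_{n+1-j}\le(a_{n+1}/a_n)\,a_{n-j}$ and then uses the level-$n$ recursion. This is exactly one of the standard short proofs of Kaluza's theorem (in the spirit of the reference the paper points to), so what you have written is a self-contained version of the same proof the paper invokes, rather than a genuinely different route. The gain is that your argument stands alone; the paper's version is shorter but leans on the literature.
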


\begin{proof} The property that $b_\nu \le 0 $ for all $\nu \ge 1$ is the theorem of Kaluza \cite[Satz
3]{Kal1928}. See \cite{Car1959} for a short direct proof of it.
Furthermore, we have
\begin{equation*}
b_{\nu} = -\frac1{a_0^2} a_{\nu} - \frac1{a_0} \sum_{j=1}^{\nu-1}
a_j b_{\nu-j} \ge - \frac1{a_0^2} a_{\nu} \quad (\nu \ge 1).
\end{equation*}
\end{proof}

For example, consider the sum-of-divisors function $\sigma$, where
$\sigma(2^\nu)=2^{\nu+1}-1$ for every $\nu \ge 0$. The sequence
$\left(\frac1{2^{\nu+1}-1}\right)_{\nu \ge 0}$ is log-convex. This
property allows us to apply Lemma \ref{lemma_Kaluza} to obtain the
estimate of Theorem \ref{Th_sum_1_sigma} for the alternating sum
$\sum_{n\le x} (-1)^{n-1} \frac1{\sigma(n)}$.

\subsection{Kendall's renewal theorem}

Another related result is Kendall's renewal theorem. Disregarding the probabilistic context, it can be stated as
follows. See Berenhaut, Allen, and Fraser \cite[Thm.\ 1.1]{Ber2006}.

\begin{lemma} \label{lemma_Kendall} Let $\sum_{\nu =0}^{\infty} a_\nu x^\nu$
be a power series  such that $(a_\nu)_{\nu \ge 0}$ is nonincreasing, $a_0=1$, $a_\nu \ge 0$ ($\nu \ge 1$) and
$a_\nu \ll q^\nu$ as $\nu \to \infty$, where $0<q<1$ is a real number. Then
there exists $0<s<1$, $s$ real, such that for the coefficients $b_\nu$ of the reciprocal power series
one has $b_\nu \ll s^\nu$ as $\nu \to \infty$.
\end{lemma}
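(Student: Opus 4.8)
The plan is to prove this the way Kendall's renewal theorem is customarily established, namely through the analytic behaviour of the generating function $A(x):=\sum_{\nu\ge 0}a_\nu x^\nu$; the quantitative version with explicit constants is \cite[Thm.\ 1.1]{Ber2006}, and the qualitative statement above follows from the outline below. Since $a_\nu\ll q^\nu$, the series $A(x)$ converges and is holomorphic on $|x|<1/q$, which strictly contains the closed unit disc, and the reciprocal power series $\sum_\nu b_\nu x^\nu$ is precisely the Taylor expansion at $0$ of $1/A(x)$. Hence the whole assertion reduces to one fact: \emph{$A$ is zero-free on some closed disc $|x|\le R$ with $R>1$}. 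Granting this, $1/A$ is holomorphic on $|x|<R$, so $\limsup_\nu|b_\nu|^{1/\nu}\le 1/R<1$, and any $s$ with $1/R<s<1$ works. (Zero-freeness merely on the circle $|x|=1$ would, by a theorem of Wiener on absolutely convergent power series, only give $\sum_\nu|b_\nu|<\infty$; to get geometric decay one must gain a genuine neighbourhood of the closed disc.)

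To locate the zeros of $A$ I would first telescope. Writing $c_\nu:=a_{\nu-1}-a_\nu\ge 0$ for $\nu\ge 1$ (monotonicity) and $C(x):=\sum_{\nu\ge 1}c_\nu x^\nu$, one has $(1-x)A(x)=1-C(x)$, with $\sum_{\nu\ge 1}c_\nu=a_0-\lim_\nu a_\nu=1$ since $a_\nu\to 0$, and $c_\nu\le a_{\nu-1}\ll q^\nu$, so $C$ too is holomorphic on $|x|<1/q$. For $0<|x|<1$ this gives $|C(x)|\le\sum_\nu c_\nu|x|^\nu\le|x|\sum_\nu c_\nu=|x|<1$, hence $C(x)\ne 1$ and $A(x)=\frac{1-C(x)}{1-x}\ne 0$; together with $A(0)=a_0=1$ this shows, unconditionally, that $A$ is zero-free on the open unit disc.

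The delicate point is the circle $|x|=1$. There $A(x)=0$ forces $C(x)=1$, and since $\sum_\nu c_\nu=1$ the equality case of the triangle inequality forces $x^\nu=1$ for every $\nu$ in the support of $(c_\nu)_{\nu\ge 1}$; if that support has greatest common divisor $1$, the only solution is $x=1$, where $A(1)=\sum_\nu a_\nu\ge 1\ne 0$. Thus $A$ is zero-free on $|x|\le 1$ as long as the support of $(c_\nu)$ is not contained in a proper subgroup $d\Z$ — the aperiodicity condition built into Kendall's theorem. (In the applications of this paper this holds trivially: there $a_1<1=a_0$, so $c_1>0$ and $1$ lies in the support.) Finally, $A$ is continuous and nonvanishing on the compact set $|x|\le 1$ and holomorphic on $|x|<1/q$, so it remains nonvanishing on $|x|\le R$ for some $R\in(1,1/q)$, which supplies the reduction of the first paragraph. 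The main obstacle is precisely this circle analysis: without nondegeneracy the statement is genuinely false (take $a_\nu$ constant on consecutive pairs of indices, so that $A(-1)=0$), so an honest proof must either assume aperiodicity or read it off from the hypotheses; for an \emph{effective} value of $s$ one would replace the soft compactness step by the explicit coefficient estimates of \cite{Ber2006}.
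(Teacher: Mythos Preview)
The paper does not actually prove this lemma: it is stated with the attribution ``See Berenhaut, Allen, and Fraser \cite[Thm.\ 1.1]{Ber2006}'' and no argument is given. So there is no in-paper proof to compare against; your write-up supplies what the paper outsources.

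Your analytic argument is the standard route to Kendall's theorem and is correct in outline: holomorphy of $A$ on a disc of radius $>1$, the telescoping identity $(1-x)A(x)=1-C(x)$ with the probability-type sequence $c_\nu=a_{\nu-1}-a_\nu\ge 0$, zero-freeness on the open disc via $|C(x)|\le |x|<1$, and then a compactness step to pass from $|x|\le 1$ to $|x|\le R$ with $R>1$. The deduction $\limsup |b_\nu|^{1/\nu}\le 1/R$ is exactly what is needed.

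You are also right to flag the aperiodicity issue, and your counterexample is genuine. Taking $a_{2k}=a_{2k+1}=q^{2k}$ gives a nonincreasing nonnegative sequence with $a_0=1$ and $a_\nu\ll q^\nu$, yet $A(x)=(1+x)/(1-q^2x^2)$ vanishes at $x=-1$ and the reciprocal coefficients satisfy $|b_\nu|=1-q^2$ for $\nu\ge 2$, so no $s<1$ works. Thus the lemma \emph{as stated in the paper} is literally false without an additional hypothesis (e.g.\ $a_1<a_0$, or more generally $\gcd\{\nu:a_{\nu-1}>a_\nu\}=1$). This does not affect the paper's applications --- in Corollary~\ref{Cor_1_per_f} one has $a_1=1/f(2)<1=a_0$ whenever $f(2)>1$, which holds for $\sigma^*$ and the other functions treated --- but it is a real omission in the formulation, and your proof correctly isolates where the extra hypothesis is consumed.
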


We deduce the next result:

\begin{corollary} \label{Cor_1_per_f} Let $f$ be a positive multiplicative function.
Assume that

(i) asymptotic formula \eqref{asympt_D_f_E_f} is valid with $1\ll R_{1/f}(x)\ll x^{\varepsilon}$ as $x\to \infty$, for every $\varepsilon >0$;

(ii) the sequence $(1/f(2^\nu))_{\nu \ge 0}$ is nonincreasing and  $1/f(2^\nu)\ll q^\nu$ as $\nu \to \infty$, where $0<q<1$ is a
real number.

Then the asymptotic formula \eqref{asympt_altern_1_per_f} holds for $\sum_{n\le x} (-1)^{n-1} \frac1{f(n)}$, with error term
\begin{equation*}
T_{1/f}(x) = x^{-u} \max(\log x, R_{1/f}(x))\ll x^{-u_1}
\end{equation*}
for some $u,u_1>0$.
\end{corollary}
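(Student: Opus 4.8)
The plan is to derive Corollary \ref{Cor_1_per_f} by verifying that the hypotheses of Proposition \ref{Prop_1_per_f} are all met, with the coefficient bound in condition (iii) of that proposition supplied by Lemma \ref{lemma_Kendall} rather than by Kaluza's theorem. First I would observe that condition (i) of the corollary is literally condition (i) of Proposition \ref{Prop_1_per_f} with the extra information $R_{1/f}(x)\ll x^\varepsilon$, which will only be used at the end to simplify the error term. Next, from condition (ii) of the corollary — that $(1/f(2^\nu))_{\nu\ge0}$ is nonincreasing with $1/f(2^\nu)\ll q^\nu$ for some $0<q<1$ — I would note that $a_0=1/f(1)=1$ and $a_\nu=1/f(2^\nu)\ge 0$, so the sequence $(a_\nu)$ defining the Bell series $S_{1/f}(x)$ satisfies exactly the hypotheses of Lemma \ref{lemma_Kendall}. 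In particular the radius of convergence of $S_{1/f}$ is $r_{1/f}\ge 1/q>1$, giving condition (ii) of Proposition \ref{Prop_1_per_f}.

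Applying Lemma \ref{lemma_Kendall} then yields a real number $0<s<1$ with $b_\nu\ll s^\nu$ for the coefficients of $\overline{S}_{1/f}(x)$; setting $M:=\max(s,q)$, or simply $M:=s$, we get $b_\nu\ll M^\nu$ with $0<M<1$, which is precisely condition (iii) of Proposition \ref{Prop_1_per_f}. Hence Proposition \ref{Prop_1_per_f} applies and gives asymptotic formula \eqref{asympt_altern_1_per_f} with error term $T_{1/f}(x)$ as in \eqref{error_M}. It remains to simplify $T_{1/f}(x)$. Write $u := -\log M/\log 2 > 0$ when $M\ge 1/2$; in the regime $1/2<M<1$ the error is $x^{\log M/\log 2}\max(\log x,R_{1/f}(x)) = x^{-u}\max(\log x, R_{1/f}(x))$, which is the stated form. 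In the regimes $0<M<1/2$ and $M=1/2$ the error terms $x^{-1}R_{1/f}(x)$ and $x^{-1}R_{1/f}(x)\log x$ are each $\ll x^{-1}R_{1/f}(x)\log x \ll x^{-u}\max(\log x, R_{1/f}(x))$ for a suitable $0<u<1$ (e.g.\ any $u<1$ absorbs the logarithmic factor), so in all three cases $T_{1/f}(x)\ll x^{-u}\max(\log x,R_{1/f}(x))$ for some $u>0$. Finally, invoking condition (i) of the corollary, $R_{1/f}(x)\ll x^\varepsilon$ for every $\varepsilon>0$, and also $\log x\ll x^\varepsilon$, so choosing $\varepsilon := u/2$ gives $x^{-u}\max(\log x,R_{1/f}(x))\ll x^{-u+\varepsilon}=x^{-u_1}$ with $u_1:=u/2>0$, completing the proof.

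The only genuine content beyond bookkeeping is matching hypotheses to Lemma \ref{lemma_Kendall} and then collapsing the three-case error term of \eqref{error_M} into a single power saving; the main (minor) obstacle is being careful that the monotonicity and normalization $a_0=1$ required by Lemma \ref{lemma_Kendall} are exactly what condition (ii) of the corollary provides, and that the constant $M$ produced is genuinely $<1$ so that \eqref{error_M} applies and the resulting exponent $-u_1$ is negative.
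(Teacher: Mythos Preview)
Your proposal is correct and follows essentially the same approach as the paper: apply Lemma \ref{lemma_Kendall} to the sequence $a_\nu=1/f(2^\nu)$ to obtain $b_\nu\ll M^\nu$ with $0<M<1$, then invoke Proposition \ref{Prop_1_per_f}. The paper's proof is a two-line remark to this effect; you have simply spelled out the verification of hypotheses and the collapse of the three-case error term \eqref{error_M} into the single form $x^{-u}\max(\log x,R_{1/f}(x))\ll x^{-u_1}$, which the paper leaves implicit.
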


\begin{proof} This is a direct consequence of Proposition \ref{Prop_1_per_f} and Lemma \ref{lemma_Kendall}, applied for
$a_{\nu}=\frac1{f(2^{\nu})}$. Note that the radius of convergence of the series $S_{1/f}(x)$ is $>1$ by condition (ii).
\end{proof}

In the case of the sum-of-unitary-divisors function $\sigma^*$ we
have $a_\nu= \sigma^*(2^\nu)=2^\nu+1$ for every $\nu \ge 1$ and
$a_0=\sigma^*(1)=1$. The sequence $\left(\frac1{a_{\nu}}\right)_{\nu
\ge 0}$ is not log-convex. Lemma \ref{lemma_Kaluza} cannot be used
to estimate the alternating sum $\sum_{n\le x} (-1)^{n-1}
\frac1{\sigma^*(n)}$. At the same time, Corollary \ref{Cor_1_per_f}, with $t=2$ furnishes an asymptotic
formula. See Section \ref{Sect_sigma_star}.

An explicit form of Lemma \ref{lemma_Kendall} (Kendall's theorem) was proved in \cite[Thm.\ 1.2]{Ber2006}. However, it is
restricted to the values $0<q<0.32$, and cannot be applied for the above special case, where $q=1/2$. To find the optimal value of $s$ for
pairs $(A,q)$ such that $a_{\nu}\le A q^{\nu}$ ($\nu \ge 1$), not satisfying assumptions of \cite[Thm.\ 1.2]{Ber2006} was formulated
by Berenhaut, Abernathy, Fan, and Foley \cite[Open question 5.4]{Ber2008}.

We prove a new explicit Kendall-type inequality, based on the following lemma.

\begin{lemma} Let $\sum_{\nu =0}^{\infty} a_\nu x^\nu$
be a power series such that $a_0=1$.  Then
for the coefficients $b_\nu$ of the reciprocal power series
$\sum_{\nu =0}^{\infty} b_\nu x^\nu$ one has $b_0=1$ and
\begin{equation} \label{b_nu}
b_{\nu} = \sum_{k=1}^{\nu} (-1)^k \sum_{\substack{j_1,\ldots,j_k\ge 1\\ j_1+\cdots +j_k=\nu}} a_{j_1}\cdots a_{j_k}
\end{equation}
\begin{equation} \label{multinom}
= \sum_{\substack{t_1,\ldots,t_{\nu} \ge 0\\ t_1+2t_2+\cdots + \nu t_{\nu}=\nu }} (-1)^{t_1+\cdots + t_{\nu}}
\binom{t_1+\cdots +t_{\nu}}{t_1,\ldots,t_{\nu}} a_1^{t_1} \cdots a_{\nu}^{t_{\nu}}
\end{equation}
for every $\nu \ge 1$, where $\binom{t_1+\cdots +t_{\nu}}{t_1,\ldots,t_{\nu}}$ are the multinomial coefficients.
\end{lemma}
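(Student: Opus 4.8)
The plan is to derive the two formulas directly from the defining recurrence $\sum_{j=0}^{\nu} a_j b_{\nu-j}=0$ for $\nu\ge 1$, together with $b_0=1$ (which follows from $a_0=1$). First I would establish \eqref{b_nu} by induction on $\nu$, or more cleanly by a generating-function argument: formally, $\overline{S}(x)=1/S(x) = 1/\bigl(1+(S(x)-1)\bigr) = \sum_{k=0}^{\infty} (-1)^k (S(x)-1)^k$, where $S(x)-1 = \sum_{j\ge 1} a_j x^j$ has no constant term, so the sum is well-defined as a formal power series. Extracting the coefficient of $x^\nu$ from $(-1)^k\bigl(\sum_{j\ge 1} a_j x^j\bigr)^k$ gives $(-1)^k \sum_{j_1+\cdots+j_k=\nu,\, j_i\ge 1} a_{j_1}\cdots a_{j_k}$, and since a composition of $\nu$ into positive parts has at most $\nu$ parts, only $k=1,\ldots,\nu$ contribute; this is exactly \eqref{b_nu}.

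For \eqref{multinom} I would reorganize the inner sum in \eqref{b_nu} by grouping compositions according to their multiset of parts. A composition $(j_1,\ldots,j_k)$ of $\nu$ is described by the data $(t_1,\ldots,t_\nu)$ where $t_i$ counts how many of the $j_\ell$ equal $i$; the constraints are $t_i\ge 0$, $\sum_i t_i = k$, and $\sum_i i\,t_i = \nu$. The number of ordered compositions with this part-multiplicity profile is the multinomial coefficient $\binom{t_1+\cdots+t_\nu}{t_1,\ldots,t_\nu}$, and each such composition contributes $a_1^{t_1}\cdots a_\nu^{t_\nu}$ with sign $(-1)^k=(-1)^{t_1+\cdots+t_\nu}$. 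Summing over $k$ merges into a single sum over all profiles $(t_1,\ldots,t_\nu)$ with $t_1+2t_2+\cdots+\nu t_\nu=\nu$, which is \eqref{multinom}. (Equivalently, \eqref{multinom} is just the multinomial theorem applied to $\bigl(\sum_{j\ge 1} a_j x^j\bigr)^k$ followed by summation over $k$, which is perhaps the most economical route and avoids \eqref{b_nu} as an intermediate step — though deriving \eqref{b_nu} first makes the bookkeeping transparent.)

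I do not expect a serious obstacle here: both identities are essentially formal manipulations of the geometric-series expansion of $1/S(x)$, and the only points requiring a word of care are (a) noting that $S(x)-1$ has zero constant term so that $\sum_k (-1)^k (S(x)-1)^k$ converges in the formal power series topology (hence the identity holds as formal power series, with no convergence hypothesis on $S$ needed), and (b) the combinatorial translation between ordered compositions and part-multiplicity vectors, i.e.\ justifying the multinomial coefficient count. If one prefers to avoid even the mild formal-series machinery, \eqref{b_nu} can instead be proved by a straightforward strong induction on $\nu$ using $b_\nu = -\sum_{j=1}^{\nu} a_j b_{\nu-j}$ and substituting the inductive expression for each $b_{\nu-j}$, which telescopes the compositions of $\nu-j$ with a prepended part $j$ into compositions of $\nu$; then \eqref{multinom} follows from \eqref{b_nu} purely combinatorially as above.
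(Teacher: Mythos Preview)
Your proposal is correct and essentially coincides with the paper's argument: both expand $1/S(x)=\sum_{k\ge 0}(-1)^k(S(x)-1)^k$ as a formal power series and translate between ordered compositions $(j_1,\ldots,j_k)$ and part-multiplicity vectors $(t_1,\ldots,t_\nu)$ via the multinomial coefficient. The only cosmetic difference is the order of presentation---the paper applies the multinomial theorem first to obtain \eqref{multinom} and then groups by $k=t_1+\cdots+t_\nu$ to recover \eqref{b_nu}, whereas you extract \eqref{b_nu} first and then regroup to get \eqref{multinom}; the combinatorial content is identical.
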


Here formula \eqref{multinom} is well known, and it has been recovered several times. See, e.g., \cite[Lemma 4]{Mer2013}. However, we were not able to
find its equivalent version \eqref{b_nu} in the literature. For the sake of completeness we present their proofs.

\begin{proof} Using the geometric series formula $(1+x)^{-1}=\sum_{n=0}^{\infty} (-1)^n x^n$ and the multinomial theorem, we immediately have
\begin{equation*}
\left(1+ \sum_{\nu=1}^{\infty} a_{\nu} x^{\nu} \right)^{-1} = \sum_{t=0}^{\infty} (-1)^t \left(\sum_{\nu=1}^{\infty} a_{\nu} x^{\nu} \right)^t
\end{equation*}
\begin{equation*}
=  \sum_{\nu=0}^{\infty} x^{\nu} \sum_{\substack{t_1,\ldots,t_{\nu} \ge 0\\ t_1+2t_2+\cdots +\nu t_{\nu}=\nu}}  (-1)^{t_1+\cdots +t_{\nu}}
\binom{t_1+\cdots +t_{\nu}}{t_1,\ldots,t_{\nu}} a_1^{t_1} \cdots a_{\nu}^{t_{\nu}},
\end{equation*}
giving \eqref{multinom}. Furthermore, fix $\nu \ge 1$. By grouping the terms in \eqref{multinom} according to the values $k=t_1+\cdots +t_{\nu}$,
where $1\le k\le \nu$,  we have
\begin{equation*}
b_{\nu}= \sum_{k=1}^{\nu} (-1)^k \sum_{\substack{t_1,\ldots,t_{\nu} \ge 0\\ t_1+2t_2+\cdots + \nu t_{\nu}=\nu \\ t_1+\cdots +t_{\nu}=k}}
\binom{t_1+\cdots +t_{\nu}}{t_1,\ldots,t_{\nu}} a_1^{t_1} \cdots a_{\nu}^{t_{\nu}}.
\end{equation*}

Now, identity \eqref{b_nu} follows if we show that
\begin{equation} \label{comb_id}
\sum_{\substack{j_1,\ldots,j_k\ge 1\\ j_1+\cdots +j_k=\nu}} a_{j_1}\cdots a_{j_k}= \sum_{\substack{t_1,\ldots,t_{\nu} \ge 0\\ t_1+2t_2+\cdots + \nu t_{\nu}=\nu \\ t_1+\cdots +t_{\nu}=k}}
\binom{t_1+\cdots +t_{\nu}}{t_1,\ldots,t_{\nu}} a_1^{t_1} \cdots a_{\nu}^{t_{\nu}}.
\end{equation}

But \eqref{comb_id} is immediate by starting with its left-hand side and denoting by $t_1,\ldots, t_{\nu}$ the number of values
$j_1,\ldots,j_k$ which are equal to $1,\ldots,\nu$, respectively.
\end{proof}

\begin{proposition} \label{Prop_expl}
Assume that $\sum_{\nu =0}^{\infty} a_\nu x^\nu$ is a power series such that $a_0=1$ and
$|a_{\nu}|\le A q^{\nu}$ ($\nu \ge 1$) for some absolute constants
$A,q> 0$. Then for the coefficients $b_\nu$ of the reciprocal power series one has
\begin{equation} \label{explicit_ineq}
|b_{\nu}| \le  A q^{\nu} (A + 1)^{\nu-1} \quad (\nu \ge 1).
\end{equation}
\end{proposition}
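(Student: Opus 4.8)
The plan is to use the explicit formula \eqref{b_nu} from the preceding lemma, which expresses
\[
b_{\nu} = \sum_{k=1}^{\nu} (-1)^k \sum_{\substack{j_1,\ldots,j_k\ge 1\\ j_1+\cdots +j_k=\nu}} a_{j_1}\cdots a_{j_k},
\]
together with the hypothesis $|a_j|\le A q^{j}$. First I would apply the triangle inequality: since each tuple $(j_1,\ldots,j_k)$ with $j_1+\cdots+j_k=\nu$ contributes a product of absolute value at most $A^k q^{j_1+\cdots+j_k}=A^k q^{\nu}$, we get
\[
|b_{\nu}| \le \sum_{k=1}^{\nu} A^k q^{\nu}\, N(\nu,k),
\]
where $N(\nu,k)$ denotes the number of compositions of $\nu$ into exactly $k$ positive parts. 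The key combinatorial fact is $N(\nu,k)=\binom{\nu-1}{k-1}$.

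Next I would substitute this and sum the resulting binomial series:
\[
|b_{\nu}| \le q^{\nu}\sum_{k=1}^{\nu} A^k \binom{\nu-1}{k-1} = A q^{\nu}\sum_{k=1}^{\nu} A^{k-1}\binom{\nu-1}{k-1} = A q^{\nu}\sum_{m=0}^{\nu-1} A^{m}\binom{\nu-1}{m} = A q^{\nu}(1+A)^{\nu-1},
\]
which is exactly \eqref{explicit_ineq}. The base case $\nu=1$ is consistent: $|b_1|=|a_1|\le Aq$, matching $Aq(A+1)^0$.

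I do not expect any serious obstacle here; the proof is essentially a one-line estimate once \eqref{b_nu} is in hand. The only point that requires a moment's care is the count $N(\nu,k)=\binom{\nu-1}{k-1}$ (the standard "stars and bars" identity for compositions into a fixed number of positive parts), and noting that the outer sum over $k$ can be extended to all of $\{1,\dots,\nu\}$ without changing anything since $\binom{\nu-1}{k-1}=0$ for $k>\nu$; equivalently one may re-index by $m=k-1$ to recognize the full binomial expansion of $(1+A)^{\nu-1}$. Alternatively, one could prove \eqref{explicit_ineq} by induction on $\nu$ directly from the recurrence $b_{\nu}=-\sum_{j=1}^{\nu} a_j b_{\nu-j}$, bounding $|b_{\nu}|\le A q^{\nu}\sum_{j=1}^{\nu} q^{j-\nu}|b_{\nu-j}|$ and plugging in the inductive hypothesis, but the closed-form argument via \eqref{b_nu} is cleaner and is the natural reason the lemma was proved first.
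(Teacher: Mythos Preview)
Your proof is correct and follows essentially the same route as the paper: apply the triangle inequality to identity \eqref{b_nu}, use $|a_{j_i}|\le Aq^{j_i}$ to bound each term by $A^k q^{\nu}$, count the compositions by $\binom{\nu-1}{k-1}$, and sum via the binomial theorem to obtain $Aq^{\nu}(A+1)^{\nu-1}$.
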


\begin{proof} By identity \eqref{b_nu} and the assumption $|a_{\nu}|\le A q^{\nu}$ ($\nu \ge 1$) we immediately have
\begin{equation*}
|b_{\nu}| \le \sum_{k=1}^{\nu} A^k \sum_{\substack{j_1,\ldots,j_k\ge 1\\ j_1+\ldots +j_k=\nu}} q^{j_1+\cdots +j_k}
= q^{\nu} \sum_{k=1}^{\nu} A^k \sum_{\substack{j_1,\ldots,j_k\ge 1\\ j_1+\ldots +j_k=\nu}} 1
\end{equation*}
\begin{equation*}
= q^{\nu} \sum_{k=1}^{\nu} A^k \binom{\nu-1}{k-1} = A q^{\nu} (A + 1)^{\nu -1},
\end{equation*}
as asserted.
\end{proof}

Note that \eqref{explicit_ineq} is an explicit Kendall-type inequality provided that $q(A+1)<1$,
in particular if $q\le 1/2$ and $A<1$.

\begin{corollary} \label{Cor_explicit} Let $f$ be a positive multiplicative function such that

(i) asymptotic formula \eqref{asympt_D_f_E_f} is valid with $1\ll R_{1/f}(x)=o(x)$ as $x\to \infty$;

(ii) $1/f(2^\nu)\le A q^\nu$ ($\nu \ge 1$), where $A,q>0$ are fixed real constants satisfying $M:=q(A+1)<1$.

Then the asymptotic formula \eqref{asympt_altern_1_per_f} holds for $\sum_{n\le x} (-1)^{n-1} \frac1{f(n)}$, with error term \eqref{error_M}.
\end{corollary}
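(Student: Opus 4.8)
The plan is to verify that the hypotheses of Corollary~\ref{Cor_explicit} imply the three hypotheses of Proposition~\ref{Prop_1_per_f} applied to the function $f$, after which the conclusion follows immediately. Thus the entire argument is a matter of checking that the assumptions translate correctly, with Proposition~\ref{Prop_expl} supplying the needed bound on the reciprocal coefficients.

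First I would address hypothesis (i) of Proposition~\ref{Prop_1_per_f}: this is literally hypothesis (i) of the corollary, namely that \eqref{asympt_D_f_E_f} holds with $1\ll R_{1/f}(x)=o(x)$ nondecreasing (one may assume $R_{1/f}$ nondecreasing without loss of generality, or note it is already required). Next I would check hypothesis (ii) of Proposition~\ref{Prop_1_per_f}, that the radius of convergence $r_{1/f}$ of $S_{1/f}(x)=\sum_{\nu\ge 0} x^\nu/f(2^\nu)$ exceeds $1$. From hypothesis (ii) of the corollary we have $1/f(2^\nu)\le Aq^\nu$, so the series converges whenever $|x|q<1$, giving $r_{1/f}\ge 1/q$; since $M=q(A+1)<1$ forces $q<1$, we get $r_{1/f}\ge 1/q>1$, as required.

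The remaining point is hypothesis (iii) of Proposition~\ref{Prop_1_per_f}: the coefficients $b_\nu$ of $\overline{S}_{1/f}(x)$ satisfy $b_\nu\ll M^\nu$ with $0<M<1$. Here I would invoke Proposition~\ref{Prop_expl} with $a_\nu=1/f(2^\nu)$ (note $a_0=1/f(1)=1$ since $f$ is multiplicative, so the hypothesis $a_0=1$ holds), which is applicable because $|a_\nu|=1/f(2^\nu)\le Aq^\nu$ for $\nu\ge 1$ by positivity of $f$. Proposition~\ref{Prop_expl} then yields
\begin{equation*}
|b_\nu|\le A q^\nu (A+1)^{\nu-1} = \frac{A}{A+1}\, \bigl(q(A+1)\bigr)^\nu = \frac{A}{A+1}\, M^\nu \ll M^\nu,
\end{equation*}
and $0<M<1$ is exactly hypothesis (ii) of the corollary. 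This gives hypothesis (iii) of Proposition~\ref{Prop_1_per_f} with this value of $M$.

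With all three hypotheses of Proposition~\ref{Prop_1_per_f} verified, the asymptotic formula \eqref{asympt_altern_1_per_f} holds with the error term $T_{1/f}(x)$ given by \eqref{error_M}, which is precisely the claim. I do not expect any genuine obstacle here; the only mild subtlety is making sure $a_0=1$ (automatic from multiplicativity) and that the constant $A/(A+1)$ absorbed into the $\ll$ is harmless, which it is since it is independent of $\nu$.
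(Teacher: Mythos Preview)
Your proposal is correct and follows exactly the paper's route: apply Proposition~\ref{Prop_expl} to bound the reciprocal coefficients and then feed all three hypotheses into Proposition~\ref{Prop_1_per_f}. The paper's proof is just a one-line reference to these two propositions together with the remark that condition~(ii) forces $r_{1/f}>1$, which is precisely the content of your expanded argument.
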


\begin{proof} This follows from Propositions \ref{Prop_1_per_f} and \ref{Prop_expl}. Note that the radius of convergence of the series
$S_{1/f}(x)$ is $>1$ by condition (ii).
\end{proof}

We will apply Corollary \ref{Cor_explicit} for the sum-of-bi-unitary-divisors function $\sigma^{**}$. See Section \ref{Sect_sigma**}.

\section{Results for classical functions} \label{Section_3}

In this section, we investigate alternating sums for classical multiplicative functions. We refer to Apostol \cite{Apo1976}, Hildebrand
\cite{Hil2013}, and McCarthy \cite{McC1986} for the basic properties of these functions. See Gould and Shonhiwa \cite{GouSho2008} for a list of
Dirichlet series of special arithmetic functions.

\subsection{Euler's totient function}

First consider Euler's $\varphi$ function, where $\varphi(n)=n\prod_{p\mid n} \left(1-\frac1{p} \right)$ ($n\ge 1$).

\begin{proposition}
\begin{equation} \label{D_altern_varphi}
\sum_{n=1}^{\infty} (-1)^{n-1} \frac{\varphi(n)}{n^s} =
\frac{2^s-3}{2^s-1} \cdot \frac{\zeta(s-1)}{\zeta(s)} \quad (\Re s>2).
\end{equation}
\end{proposition}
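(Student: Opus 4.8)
The plan is to apply Proposition \ref{prop_1} directly to $f=\varphi$, using the Bell-series identity \eqref{varphi_p} that has already been established for the prime factor $p=2$. Concretely, Proposition \ref{prop_1} asserts that
\[
\sum_{n=1}^{\infty} (-1)^{n-1} \frac{\varphi(n)}{n^s} = D(\varphi,s)\left( 2 \left( \sum_{\nu=0}^{\infty} \frac{\varphi(2^\nu)}{2^{\nu s}}\right)^{-1} - 1\right),
\]
so the proof reduces to evaluating the two ingredients on the right and simplifying.

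First I would recall that $D(\varphi,s)=\zeta(s-1)/\zeta(s)$ from \eqref{111}, valid for $\Re s>2$, which is exactly the range claimed in the statement. Next I would use \eqref{varphi_p} with $p=2$, namely
\[
\sum_{\nu=0}^{\infty} \frac{\varphi(2^\nu)}{2^{\nu s}} = \left(1-\frac1{2^s}\right)\left(1-\frac1{2^{s-1}}\right)^{-1},
\]
so that its reciprocal is $\left(1-\frac1{2^{s-1}}\right)\left(1-\frac1{2^s}\right)^{-1}$. This is precisely the content of \eqref{D_altern_varphi_first}, which the excerpt already derives; the remaining task is the purely algebraic simplification of the scalar factor
\[
2\left(1-\frac1{2^{s-1}}\right)\left(1-\frac1{2^s}\right)^{-1} - 1
\]
into the closed form $\frac{2^s-3}{2^s-1}$.

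For that simplification I would clear denominators by writing $1-\frac1{2^{s-1}} = \frac{2^{s-1}-1}{2^{s-1}} = \frac{2^s-2}{2^s}$ and $1-\frac1{2^s} = \frac{2^s-1}{2^s}$, so the factor becomes
\[
2\cdot\frac{2^s-2}{2^s}\cdot\frac{2^s}{2^s-1} - 1 = \frac{2(2^s-2)}{2^s-1} - 1 = \frac{2^{s+1}-4-(2^s-1)}{2^s-1} = \frac{2^s-3}{2^s-1}.
\]
Multiplying by $D(\varphi,s)=\zeta(s-1)/\zeta(s)$ yields \eqref{D_altern_varphi}. I do not anticipate a genuine obstacle here: everything rests on Proposition \ref{prop_1} and the already-recorded Euler-factor computation \eqref{varphi_p}, and the only real step is the elementary rational-function manipulation above. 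The one point worth stating carefully is the region of validity: since $D(\varphi,s)=\zeta(s-1)/\zeta(s)$ converges absolutely only for $\Re s>2$, and since $n\mapsto(-1)^{n-1}\varphi(n)$ has the same growth as $\varphi(n)$, the left-hand side also converges (absolutely) exactly for $\Re s>2$, which matches the claim; no analytic continuation is asserted or needed.
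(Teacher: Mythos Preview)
Your proof is correct and follows exactly the approach of the paper: apply Proposition~\ref{prop_1} together with the Bell-series identity \eqref{varphi_p} to obtain \eqref{D_altern_varphi_first}, and then perform the elementary algebraic simplification to reach \eqref{D_altern_varphi}. The only addition beyond the paper's (very terse) proof is your explicit check of the region $\Re s>2$, which is welcome but not new.
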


\begin{proof} This was explained in Section \ref{Sect_Alter_Dir_series}, formula
\eqref{D_altern_varphi} follows at once from
\eqref{D_altern_varphi_first}.
\end{proof}

\begin{theorem}
\begin{equation} \label{sum_varphi}
\sum_{n\le x} (-1)^{n-1} \varphi(n) = \frac1{\pi^2} x^2 + O\left( x
(\log x)^{2/3} (\log \log x)^{4/3} \right).
\end{equation}
\end{theorem}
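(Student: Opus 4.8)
The plan is to apply Proposition~\ref{Prop_f} directly to $f = \varphi$, so the task reduces to checking its three hypotheses. For hypothesis (i), I would invoke the classical estimate for the summatory function of Euler's totient, namely
\begin{equation*}
\sum_{n\le x} \varphi(n) = \frac{3}{\pi^2} x^2 + O\left( x (\log x)^{2/3} (\log\log x)^{4/3} \right),
\end{equation*}
which is the standard result obtained via $\varphi = \mu * \mathrm{id}$ together with the best known error term in the estimate $\sum_{n\le x} \mu(n)/n \ll \exp(-c(\log x)^{3/5}(\log\log x)^{-1/5})$ (Walfisz); the $(\log x)^{2/3}(\log\log x)^{4/3}$ form is the one recorded in, e.g., Walfisz's book. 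Thus $C_\varphi = 3/\pi^2$ and $R_\varphi(x) = (\log x)^{2/3}(\log\log x)^{4/3}$, which is nondecreasing and satisfies $1 \ll R_\varphi(x) = o(x)$.

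Next I would compute the Bell series $S_\varphi(x)$ at $p=2$. From \eqref{varphi_p} with $x = 1/2^s$ we have $S_\varphi(x) = \sum_{\nu\ge 0} \varphi(2^\nu) x^\nu = 1 + \sum_{\nu\ge 1} 2^{\nu-1} x^\nu = \frac{1-x}{1-2x}$, valid for $|x| < 1/2$. In particular $S_\varphi(1/4)$ converges (equal to $(1-1/4)/(1-1/2) = 3/2$), giving hypothesis (ii). For hypothesis (iii), the reciprocal power series is $\overline{S}_\varphi(x) = \frac{1-2x}{1-x} = 1 - \frac{x}{1-x} = 1 - \sum_{\nu\ge 1} x^\nu$, so $b_0 = 1$ and $b_\nu = -1$ for all $\nu\ge 1$; this sequence is manifestly bounded. (One could also observe this is covered by Lemma~\ref{lemma_Kaluza}, since $(\varphi(2^\nu))_{\nu\ge 0} = (1,1,2,4,8,\dots)$ is log-convex, forcing $b_\nu \le 0$ and $|b_\nu| \le a_\nu/a_0^2$ — but here the explicit formula is cleaner.)

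With all hypotheses verified, Proposition~\ref{Prop_f} yields
\begin{equation*}
\sum_{n\le x} (-1)^{n-1} \varphi(n) = \frac{3}{\pi^2}\left( \frac{2}{S_\varphi(1/4)} - 1 \right) x^2 + O\left( x (\log x)^{2/3}(\log\log x)^{4/3}\right),
\end{equation*}
and since $S_\varphi(1/4) = 3/2$ the leading constant is $\frac{3}{\pi^2}(4/3 - 1) = \frac{3}{\pi^2}\cdot\frac13 = \frac{1}{\pi^2}$, which is exactly \eqref{sum_varphi}. (As a consistency check, this constant also equals the value at $s=2$ of the bracket in \eqref{D_altern_varphi_first}, consistent with \eqref{D_altern_varphi} giving the "density" $\frac{2^2-3}{2^2-1}\cdot\frac{1}{\zeta(2)} = \frac13\cdot\frac{6}{\pi^2} = \frac{2}{\pi^2}$ for the Dirichlet-series residue — the factor-of-two discrepancy being the usual one between $\sum n^{-s}\varphi(n)$ residues and $x^2$-coefficients.) There is no real obstacle here: the only non-elementary input is Walfisz's error term for $\sum_{n\le x}\varphi(n)$, which is quoted, and everything else is the routine Bell-series computation packaged by Proposition~\ref{Prop_f}.
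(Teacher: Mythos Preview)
Your proposal is correct and follows essentially the same route as the paper: apply Proposition~\ref{Prop_f} with $f=\varphi$, quote Walfisz's error term for $\sum_{n\le x}\varphi(n)$, compute $S_\varphi(x)=\frac{1-x}{1-2x}$ and its reciprocal $\overline{S}_\varphi(x)=1-\sum_{\nu\ge 1}x^\nu$ to verify the hypotheses, and evaluate the constant as $\frac{3}{\pi^2}\cdot\frac13=\frac1{\pi^2}$. The side remarks (Kaluza's lemma, the residue consistency check) are extra but correct.
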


\begin{proof} Apply Proposition \ref{Prop_f} for $f=\varphi$. It is known that
\begin{equation*}
\sum_{n\le x} \varphi(n) = \frac{3}{\pi^2} x^2 + O\left( x (\log x)^{2/3} (\log \log x)^{4/3} \right),
\end{equation*}
which is the best error term known to date, due to Walfisz \cite[Satz 1,
p.\ 144]{Wal1963}. Furthermore,
\begin{equation*}
S_{\varphi}(x)= \sum_{\nu =0}^{\infty} \varphi(2^\nu) x^\nu =
1+\sum_{\nu =1}^{\infty} 2^{\nu -1} x^{\nu} =\frac{1-x}{1-2x} \quad
\left(|x|<\frac1{2}\right)
\end{equation*}
(also see \eqref{varphi_p}). We obtain that the reciprocal power series is
\begin{equation*}
\overline{S}_{\varphi}(x) = \frac{1-2x}{1-x}= 1- \sum_{\nu
=1}^{\infty} x^\nu  \quad (|x|<1),
\end{equation*}
for which the coefficients are $b_0=1$, $b_\nu =-1$ ($\nu \ge 1$),
forming a bounded sequence. The coefficient of the main term in
\eqref{sum_varphi} is
\begin{equation*}
C_{\varphi} \left(\frac{2}{S_{\varphi}(1/4)} -1\right)= \frac{3}{\pi^2}\cdot \frac1{3}=\frac1{\pi^2}.
\end{equation*}
\end{proof}

\begin{remark} To find the corresponding constant to be multiplied by
$C_{\varphi}=3/\pi^2$ observe that by \eqref{convo}, \eqref{const}
and \eqref{D_altern_varphi},
\begin{equation*}
\frac{2}{S_{\varphi}(1/4)} -1 = \left[ \frac{2^s-3}{2^s-1} \right]_{s=2}=\frac1{3},
\end{equation*}
and similarly for other classical multiplicative functions, if we
have the representation of their alternating Dirichlet series.
\end{remark}

\begin{theorem}
\begin{equation} \label{sum_1_varphi}
\sum_{n\le x} (-1)^{n-1} \frac1{\varphi(n)} = - \frac{A}{3}
\left(\log x+\gamma -B -\frac{8}{3}\log 2\right)  + O\left( x^{-1}
(\log x)^{5/3} \right),
\end{equation}
where $\gamma$ is Euler's constant and
\begin{equation} \label{def_AB}
A=\frac{\zeta(2)\zeta(3)}{\zeta(6)}=\frac{315\zeta(3)}{2\pi^4}, \qquad B=\sum_{p\in \P} \frac{\log p}{p^2-p+1}.
\end{equation}
\end{theorem}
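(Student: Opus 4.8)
The plan is to apply Proposition \ref{Prop_1_per_f} directly to $f=\varphi$. For the input estimate \eqref{asympt_D_f_E_f} I would invoke the classical formula of Landau, with the error term in the form sharpened by Sitaramachandra Rao,
\[
\sum_{n\le x}\frac1{\varphi(n)} = A\bigl(\log x+\gamma-B\bigr) + O\!\left(x^{-1}(\log x)^{2/3}\right),
\]
where $A$ and $B$ are as in \eqref{def_AB}. Thus $D_f=A$, $E_f=\gamma-B$ and $R_{1/f}(x)=(\log x)^{2/3}$, which is nondecreasing and $o(x)$, so hypothesis (i) holds.

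Next I would work out the two power series attached to $f=\varphi$ at the prime $2$. Since $\varphi(1)=1$ and $\varphi(2^\nu)=2^{\nu-1}$ for $\nu\ge 1$, the coefficients of $S_{1/\varphi}$ are $a_0=1$ and $a_\nu=2^{1-\nu}$ ($\nu\ge 1$), whence
\[
S_{1/\varphi}(x) = 1 + \sum_{\nu\ge 1} 2^{1-\nu}x^\nu = 1 + \frac{2x}{2-x} = \frac{2+x}{2-x}\qquad(|x|<2),
\]
a rational function with radius of convergence $r_{1/\varphi}=2>1$, so hypothesis (ii) holds. Its reciprocal is
\[
\overline{S}_{1/\varphi}(x) = \frac{2-x}{2+x} = 1 - \sum_{\nu\ge 1}(-1)^{\nu-1}2^{1-\nu}x^\nu\qquad(|x|<2),
\]
so $b_0=1$ and $b_\nu=-(-1/2)^{\nu-1}$ for $\nu\ge 1$; in particular $|b_\nu|\le 2\,(1/2)^\nu$, i.e.\ hypothesis (iii) holds with $M=\tfrac12$. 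Consequently \eqref{error_M} gives the error term $T_{1/\varphi}(x)=x^{-1}R_{1/f}(x)\log x = x^{-1}(\log x)^{5/3}$.

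It then remains to evaluate the main-term constants in \eqref{asympt_altern_1_per_f} at the point $1$. From $S_{1/\varphi}(x)=(2+x)/(2-x)$ we get $S_{1/\varphi}(1)=3$ and, since $S_{1/\varphi}'(x)=4/(2-x)^2$, also $S_{1/\varphi}'(1)=4$. Hence $2/S_{1/\varphi}(1)-1=-\tfrac13$ and $S_{1/\varphi}'(1)/S_{1/\varphi}(1)^2=\tfrac49$, so \eqref{asympt_altern_1_per_f} reads
\[
\sum_{n\le x}(-1)^{n-1}\frac1{\varphi(n)} = A\!\left(-\frac13\bigl(\log x+\gamma-B\bigr) + \frac{8\log 2}{9}\right) + O\!\left(x^{-1}(\log x)^{5/3}\right),
\]
which is \eqref{sum_1_varphi} after pulling out the factor $-A/3$. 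I do not expect any genuine difficulty: the entire argument is an instantiation of Proposition \ref{Prop_1_per_f}, and the only points deserving care are citing the correct known estimate for $\sum_{n\le x}1/\varphi(n)$ (which fixes both the constant $E_f=\gamma-B$ and the exponent $\tfrac23$ in $R_{1/f}$) and noting that the reciprocal series decays at the borderline rate $M=\tfrac12$, which is precisely what inserts the extra $\log x$ into the final error term relative to \eqref{asympt_D_f_E_f}.
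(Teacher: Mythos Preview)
Your proposal is correct and follows essentially the same route as the paper: apply Proposition~\ref{Prop_1_per_f} with the Landau--Sitaramachandra~Rao estimate for $\sum_{n\le x}1/\varphi(n)$, compute $S_{1/\varphi}(x)=(2+x)/(2-x)$ and its reciprocal to find $b_\nu\ll 2^{-\nu}$ (so $M=\tfrac12$), and then evaluate $S_{1/\varphi}(1)=3$, $S_{1/\varphi}'(1)=4$ to recover the stated constants and error term.
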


The result \eqref{sum_1_varphi} improves the error term $O\left(
x^{-1} (\log x)^3 \right)$ obtained by Bordell\`{e}s and Cloitre
\cite[Cor.\ 4, (i)]{BorClo2013}.

\begin{proof} Apply Proposition \ref{Prop_1_per_f} for $f=\varphi$. The asymptotic formula
\begin{equation*}
\sum_{n\le x} \frac1{\varphi(n)} = A \left(\log
x+\gamma - B \right)  + O\left( x^{-1} (\log
x)^{2/3} \right)
\end{equation*}
with constants $A$ and $B$ defined by \eqref{def_AB} and with
the weaker error term $O\left( x^{-1} \log x \right)$ goes back to
the work of Landau. See \cite[Thm.\ 1.1]{DeKIvi1980}. The error
term above was obtained by Sitaramachandrarao \cite{Sit1982}.

Now
\begin{equation*}
S_{1/\varphi}(x)= \sum_{\nu=0}^{\infty} \frac{x^\nu}{\varphi(2^\nu)}
= 1+ \sum_{\nu=1}^{\infty} \frac{x^\nu}{2^{\nu-1}} =\frac{2+x}{2-x}
\quad (|x|<2),
\end{equation*}
\begin{equation*}
\overline{S}_{1/\varphi}(x) = \frac{1-x/2}{1+x/2}= 1+ \sum_{\nu
=1}^{\infty} (-1)^\nu \frac{x^\nu}{2^{\nu-1}} \quad (|x|<2);
\end{equation*}
hence $b_{\nu} \ll 2^{-\nu}$ and choose $M=1/2$. Using that $S_{1/\varphi}(1)=3$ and $S'_{1/\varphi}(1)=4$,
the proof is complete.
\end{proof}

\subsection{Dedekind function}

The Dedekind function $\psi$ is given by $\psi(n)=n \prod_{p\mid n}
\left(1+\frac1{p}\right)$ ($n\ge 1$).

\begin{proposition}
\begin{equation} \label{D_altern_psi}
\sum_{n=1}^{\infty} (-1)^{n-1} \frac{\psi(n)}{n^s} =
\frac{2^s-5}{2^s+1} \cdot \frac{\zeta(s)\zeta(s-1)}{\zeta(2s)} \quad
(\Re s>2),
\end{equation}
\end{proposition}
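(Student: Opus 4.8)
The plan is to follow verbatim the pattern already used for Euler's totient function in Section~\ref{Sect_Alter_Dir_series}: apply Proposition~\ref{prop_1} with $f=\psi$, substitute the classical Dirichlet series of $\psi$ together with the Bell factor of $\psi$ at the prime $2$, and simplify the resulting rational function of $2^s$.

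First I would recall the well-known representation $\sum_{n=1}^{\infty} \psi(n) n^{-s} = \zeta(s)\zeta(s-1)/\zeta(2s)$, valid for $\Re s>2$, which comes from $\psi = \operatorname{id} * |\mu|$ (or directly from the Euler product), and whose local factor at a prime $p$ is $(1+p^{-s})/(1-p^{1-s})$. Next I would determine the Bell series of $\psi$ at $p=2$: since $\psi(1)=1$ and $\psi(2^\nu)=3\cdot 2^{\nu-1}$ for $\nu\ge 1$, summing a geometric series gives
\begin{equation*}
\sum_{\nu=0}^{\infty}\frac{\psi(2^\nu)}{2^{\nu s}} = 1 + \frac{3}{2}\sum_{\nu=1}^{\infty}\frac{1}{2^{\nu(s-1)}} = 1 + \frac{3}{2(2^{s-1}-1)} = \frac{2^s+1}{2^s-2},
\end{equation*}
which is of course nothing but the $p=2$ Euler factor $(1+2^{-s})/(1-2^{1-s})$ read off from the product for $\zeta(s)\zeta(s-1)/\zeta(2s)$, exactly as in the passage from \eqref{111}--\eqref{222} to \eqref{varphi_p}.

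Then I would plug these two ingredients into \eqref{form_Dir_altern}: the correction factor becomes
\begin{equation*}
2\left(\frac{2^s+1}{2^s-2}\right)^{-1} - 1 = \frac{2(2^s-2)-(2^s+1)}{2^s+1} = \frac{2^s-5}{2^s+1},
\end{equation*}
and multiplying by $\zeta(s)\zeta(s-1)/\zeta(2s)$ yields exactly \eqref{D_altern_psi}. It remains only to pin down the region of validity: since $\psi(n)\ll n\log\log n$, the series $\sum_{n\ge 1}(-1)^{n-1}\psi(n)n^{-s}$ converges absolutely for $\Re s>2$, and the rational factor $(2^s-5)/(2^s+1)$ is holomorphic there (its poles lie on $\Re s=0$), so the identity holds throughout $\Re s>2$.

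Every step here is either a quoted classical fact or a one-line computation, so there is no genuine obstacle; the only point requiring care is getting the Bell factor at $p=2$ and the ensuing rational simplification right. Alternatively, as the excerpt remarks for $\varphi$, one could instead verify \eqref{D_altern_psi} directly by splitting the Euler product of $n\mapsto(-1)^{n-1}\psi(n)$ into its $p=2$ factor $1-\sum_{\nu\ge 1}\psi(2^\nu)2^{-\nu s}$ and the factors over odd primes, but the route through Proposition~\ref{prop_1} is shorter.
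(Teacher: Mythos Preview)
Your proposal is correct and follows exactly the approach the paper takes: quote the classical Dirichlet series $\sum_n \psi(n)n^{-s}=\zeta(s)\zeta(s-1)/\zeta(2s)$ and apply Proposition~\ref{prop_1}, just as was done for $\varphi$ in \eqref{D_altern_varphi_first}. In fact you spell out more detail (the explicit Bell factor at $p=2$ and the rational simplification) than the paper, which merely says the identity ``follows like \eqref{D_altern_varphi}, by using Proposition~\ref{prop_1}.''
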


\begin{proof} It is well-known that
\begin{equation*}
\sum_{n=1}^{\infty} \frac{\psi(n)}{n^s} =
\frac{\zeta(s)\zeta(s-1)}{\zeta(2s)} \quad (\Re s>2),
\end{equation*}
and \eqref{D_altern_psi} follows like \eqref{D_altern_varphi}, by using Proposition
\ref{prop_1}.
\end{proof}

\begin{theorem}
\begin{equation} \label{sum_psi}
\sum_{n\le x} (-1)^{n-1} \psi(n) = - \frac{3}{2\pi^2} x^2 + O\left(
x (\log x)^{2/3} \right),
\end{equation}
\end{theorem}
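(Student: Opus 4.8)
The plan is to apply Proposition~\ref{Prop_f} with $f=\psi$, exactly as Theorem~\ref{sum_varphi} was obtained for $\varphi$: verify the three hypotheses of that proposition, then read off the main constant. For hypothesis~(i) I would quote the known asymptotic formula
\[
\sum_{n\le x}\psi(n)=\frac{15}{2\pi^2}\,x^2+O\bigl(x(\log x)^{2/3}\bigr),
\]
so that $C_\psi=\frac{15}{2\pi^2}$ and $R_\psi(x)=(\log x)^{2/3}$, which is nondecreasing and $o(x)$. This estimate follows from the Walfisz-type bound $\sum_{m\le t}\sigma(m)=\frac{\pi^2}{12}t^2+O\bigl(t(\log t)^{2/3}\bigr)$ together with the identity $D(\psi,s)=D(\sigma,s)/\zeta(2s)$, i.e.\ $\psi(n)=\sum_{d^2\mid n}\mu(d)\,\sigma(n/d^2)$: summing over $d\le\sqrt x$ produces the main term with coefficient $\frac{\pi^2}{12}\cdot\zeta(4)^{-1}=\frac{15}{2\pi^2}$ and an error $\ll x(\log x)^{2/3}\sum_{d\ge1}d^{-2}\ll x(\log x)^{2/3}$.

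For hypotheses~(ii) and~(iii) the relevant object is the Bell series of $\psi$ at $p=2$. Since $\psi(1)=1$ and $\psi(2^\nu)=2^\nu(1+\tfrac12)=3\cdot2^{\nu-1}$ for $\nu\ge1$, a geometric-series computation gives
\[
S_\psi(x)=1+\sum_{\nu\ge1}3\cdot2^{\nu-1}x^\nu=\frac{1+x}{1-2x}\qquad\Bigl(|x|<\tfrac12\Bigr),
\]
so $S_\psi(1/4)=\frac{5}{2}$ converges (hypothesis~(ii)), and the reciprocal power series is
\[
\overline{S}_\psi(x)=\frac{1-2x}{1+x}=1+3\sum_{\nu\ge1}(-1)^\nu x^\nu,
\]
whose coefficients $b_0=1$, $b_\nu=3(-1)^\nu$ ($\nu\ge1$) form a bounded sequence (hypothesis~(iii)). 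Proposition~\ref{Prop_f} then yields
\[
\sum_{n\le x}(-1)^{n-1}\psi(n)=C_\psi\Bigl(\frac{2}{S_\psi(1/4)}-1\Bigr)x^2+O\bigl(x(\log x)^{2/3}\bigr),
\]
and the constant equals $\frac{15}{2\pi^2}\bigl(\tfrac{4}{5}-1\bigr)=-\frac{3}{2\pi^2}$. As a cross-check, evaluating the rational factor in \eqref{D_altern_psi} at $s=2$ gives $\bigl[\tfrac{2^s-5}{2^s+1}\bigr]_{s=2}=-\tfrac15$, the same multiplier of $C_\psi$.

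The argument is routine once hypothesis~(i) is in hand; the Bell-series evaluation of $S_\psi$, its reciprocal, and the arithmetic producing $-3/(2\pi^2)$ are all one-line computations. The only point needing external input is the sharp error term $O\bigl(x(\log x)^{2/3}\bigr)$ for $\sum_{n\le x}\psi(n)$, since the elementary convolution $\psi=\mathrm{Id}\ast\mu^2$ by itself only delivers $O(x\log x)$.
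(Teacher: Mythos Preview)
Your proposal is correct and follows essentially the same route as the paper: apply Proposition~\ref{Prop_f} to $f=\psi$, compute $S_\psi(x)=\frac{1+x}{1-2x}$ and its reciprocal to check the bounded-coefficients hypothesis, and read off the constant $-\frac{3}{2\pi^2}$. The only minor difference is that the paper simply cites Walfisz's original estimate for $\sum_{n\le x}\psi(n)$ directly, whereas you sketch how to derive it from Walfisz's bound for $\sum_{n\le x}\sigma(n)$ via $\psi=\sigma\ast(\mu\circ\mathrm{sq})$; both are fine.
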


\begin{proof} Apply Proposition \ref{Prop_f} for $f=\psi$. It is known that
\begin{equation*}
\sum_{n\le x} \psi(n) = \frac{15}{2\pi^2} x^2 + O\left( x (\log
x)^{2/3} \right),
\end{equation*}
the best estimate
up to now. See Walfisz \cite[Satz 2, p.\ 100]{Wal1963}. Here
\begin{equation*}
S_{\psi}(x)= \sum_{\nu=0}^{\infty} \psi(2^\nu) x^\nu  = 1+ 3
\sum_{\nu=1}^{\infty} 2^{\nu-1} x^n =\frac{1+x}{1-2x} \quad
\left(|x|<\frac1{2}\right).
\end{equation*}

We obtain that the reciprocal power series is
\begin{equation*}
\overline{S}_{\psi}(x) = \frac{1-2x}{1+x}= 1 + 3
\sum_{\nu=1}^{\infty} (-1)^\nu x^\nu \quad (|x|<1),
\end{equation*}
for which the coefficients are $b_0=1$, $b_\nu=3(-1)^\nu$ ($\nu \ge
1$), forming a bounded sequence. The coefficient of the main term in
\eqref{sum_psi} is
\begin{equation*}
C_{\psi} \left(\frac{2}{S_{\psi}(1/4)} -1\right)= \frac{15}{2\pi^2}
\cdot (-\frac1{5})= - \frac{3}{2\pi^2}.
\end{equation*}
\end{proof}

\begin{theorem}
\begin{equation} \label{sum_1_psi}
\sum_{n\le x} (-1)^{n-1} \frac1{\psi(n)} = \frac{C}{5} \left(\log x+
\gamma + D + \frac{24}{5}\log 2\right)  + O\left( x^{-1} (\log
x)^{2/3}(\log \log x)^{4/3} \right),
\end{equation}
where $\gamma$ is Euler's constant and
\begin{equation} \label{def_C}
C=\prod_{p\in \P} \left(1-\frac1{p(p+1)} \right), \qquad D=\sum_{p\in
\P} \frac{\log p}{p^2+p-1}.
\end{equation}
\end{theorem}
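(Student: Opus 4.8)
The plan is to apply Proposition~\ref{Prop_1_per_f} with $f=\psi$, exactly as in the proof of the companion result \eqref{sum_1_varphi} for $\varphi$. First I would record the required input: the known asymptotic formula
\[
\sum_{n\le x}\frac1{\psi(n)} = C\bigl(\log x+\gamma+D\bigr) + O\bigl(x^{-1}(\log x)^{2/3}(\log\log x)^{4/3}\bigr),
\]
with $C,D$ as in \eqref{def_C}. This is produced by the convolution method from the factorization $D(1/\psi,s)=\zeta(s+1)\prod_p\bigl(1-\tfrac1{p^{s+1}(p+1)}\bigr)$, the refined error term coming from a Walfisz-type bound on the resulting M\"obius sum, just as in the totient case. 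It gives hypothesis~(i) of Proposition~\ref{Prop_1_per_f} with $D_\psi=C$, $E_\psi=\gamma+D$ and $R_{1/\psi}(x)=(\log x)^{2/3}(\log\log x)^{4/3}$, which is nondecreasing and satisfies $1\ll R_{1/\psi}(x)=o(x)$.

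Next I would compute the Bell series of $1/\psi$ at $p=2$ and its reciprocal. Since $\psi(1)=1$ and $\psi(2^\nu)=3\cdot2^{\nu-1}$ for $\nu\ge1$,
\[
S_{1/\psi}(x)=1+\frac23\sum_{\nu\ge1}\Bigl(\frac x2\Bigr)^{\nu}=\frac{1-x/6}{1-x/2}\qquad(|x|<2),
\]
so $r_{1/\psi}=2>1$ and hypothesis~(ii) holds. Inverting,
\[
\overline{S}_{1/\psi}(x)=\frac{1-x/2}{1-x/6}=1-2\sum_{\nu\ge1}\Bigl(\frac x6\Bigr)^{\nu}\qquad(|x|<6),
\]
so $b_0=1$ and $b_\nu=-2\cdot6^{-\nu}$ for $\nu\ge1$; in particular $b_\nu\ll M^{\nu}$ with $M=1/6$. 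Since $0<M<1/2$, hypothesis~(iii) is satisfied and the error term \eqref{error_M} specializes to $T_{1/\psi}(x)=x^{-1}R_{1/\psi}(x)$, which is precisely the error asserted in \eqref{sum_1_psi}.

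It then remains to evaluate the constants in \eqref{asympt_altern_1_per_f}. From the closed form, $S_{1/\psi}(1)=\tfrac{5/6}{1/2}=\tfrac53$, so $\tfrac2{S_{1/\psi}(1)}-1=\tfrac15$; and $S_{1/\psi}'(x)=\tfrac13(1-x/2)^{-2}$, so $S_{1/\psi}'(1)=\tfrac43$ and $2(\log2)\,S_{1/\psi}'(1)/S_{1/\psi}(1)^2=2(\log2)\cdot\tfrac{4/3}{25/9}=\tfrac{24}{25}\log2$. Substituting $D_f=C$, $E_f=\gamma+D$ and these values into \eqref{asympt_altern_1_per_f} yields the main term
\[
C\Bigl(\tfrac15(\log x+\gamma+D)+\tfrac{24}{25}\log2\Bigr)=\frac C5\Bigl(\log x+\gamma+D+\tfrac{24}{5}\log2\Bigr),
\]
which is exactly \eqref{sum_1_psi}.

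Because $S_{1/\psi}$ is a simple rational function, conditions~(ii) and~(iii) of Proposition~\ref{Prop_1_per_f} are verified by inspection, so there is essentially no obstacle internal to the argument; the one genuinely external ingredient is the cited asymptotic formula for $\sum_{n\le x}1/\psi(n)$ with its $(\log x)^{2/3}(\log\log x)^{4/3}$ error, and the only place demanding care is the bookkeeping of the rational constants $\tfrac15$, $\tfrac43$ and $\tfrac{24}{25}\log2$.
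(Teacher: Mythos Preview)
Your proposal is correct and follows essentially the same route as the paper's own proof: apply Proposition~\ref{Prop_1_per_f} with $f=\psi$, cite the Sita~Ramaiah--Suryanarayana asymptotic for $\sum_{n\le x}1/\psi(n)$, compute $S_{1/\psi}(x)=(1-x/6)/(1-x/2)$ and its reciprocal to get $b_\nu=-2\cdot6^{-\nu}$ (so $M=1/6<1/2$), and plug in $S_{1/\psi}(1)=5/3$, $S_{1/\psi}'(1)=4/3$. Your bookkeeping of the constants matches the paper exactly.
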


The result \eqref{sum_1_psi} improves the error term $O\left( x^{-1}
(\log x)^2 \right)$ obtained by Bordell\`{e}s and Cloitre
\cite[Cor.\ 4, (iii)]{BorClo2013}. The constant $C \doteq 0.704442$
is sometimes called the carefree constant, and its digits form
the sequence \seqnum{A065463}
in Sloane's Online Encyclopedia of Integer Sequences (OEIS)
\cite{OEIS}. Also see Finch \cite[Sect. 2.5.1]{Finc2003}.

\begin{proof} Apply Proposition \ref{Prop_1_per_f} for $f=\psi$. The asymptotic formula
\begin{equation*}
\sum_{n\le x} \frac1{\psi(n)} = C \left(\log x+\gamma + D \right)  +
O\left( x^{-1} (\log x)^{2/3} (\log \log x)^{4/3}\right),
\end{equation*}
where $C$ and $D$ are the constants given by \eqref{def_C}, is due
to Sita Ramaiah and Suryanarayana \cite[Cor.\ 4.2]{SitSur1979}.

Furthermore,
\begin{equation*}
S_{1/\psi}(x)= \sum_{\nu=0}^{\infty} \frac{x^\nu}{\psi(2^\nu)} = 1+
\frac{2}{3} \sum_{\nu=1}^{\infty} \frac{x^\nu}{2^{\nu}}
=\frac{6-x}{3(2-x)} \quad (|x|<2),
\end{equation*}
\begin{equation*}
\overline{S}_{1/\psi}(x) = \frac{1-x/2}{1-x/6}= 1-2\sum_{\nu
=1}^{\infty} \frac{x^\nu}{6^{\nu}} \quad (|x|<6),
\end{equation*}
which shows that
\begin{equation*}
b_{\nu} = - \frac{2}{6^{\nu}} \quad (\nu \ge 1).
\end{equation*}

Hence $b_{\nu} \ll 6^{-\nu}$ and choose $M=1/6$. Using that $S_{1/\psi}(1)=\frac{5}{3}$ and
$S'_{1/\psi}(1)=\frac{4}{3}$, we deduce \eqref{sum_1_psi}.
\end{proof}

\subsection{Sum-of-divisors function}

Consider the function $\sigma(n)=\sum_{d\mid n} d$ ($n\ge 1$).

\begin{proposition}
\begin{equation} \label{D_altern_sigma}
\sum_{n=1}^{\infty} (-1)^{n-1} \frac{\sigma(n)}{n^s}  =
\left(1-\frac{6}{2^s}+\frac{4}{2^{2s}}\right) \zeta(s)\zeta(s-1)
\quad (\Re s>2).
\end{equation}
\end{proposition}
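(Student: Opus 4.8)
The plan is to follow verbatim the derivation of \eqref{D_altern_varphi} and \eqref{D_altern_psi}, applying Proposition~\ref{prop_1} with $f=\sigma$. First I would recall the classical Dirichlet series
\[
\sum_{n=1}^{\infty} \frac{\sigma(n)}{n^s} = \zeta(s)\,\zeta(s-1) = \prod_{p\in\P} \left(1-\frac1{p^s}\right)^{-1}\left(1-\frac1{p^{s-1}}\right)^{-1} \qquad (\Re s>2),
\]
the series converging absolutely there because $\sigma(n)\ll n^{1+\varepsilon}$. Comparing this with the Euler product $\prod_{p\in\P}\sum_{\nu\ge 0}\sigma(p^\nu)p^{-\nu s}$, exactly as \eqref{varphi_p} was read off from \eqref{111} and \eqref{222}, gives the Bell series of $\sigma$ at the prime $2$,
\[
\sum_{\nu=0}^{\infty} \frac{\sigma(2^\nu)}{2^{\nu s}} = \left(1-\frac1{2^s}\right)^{-1}\left(1-\frac1{2^{s-1}}\right)^{-1};
\]
alternatively one may simply sum the geometric series $\sum_{\nu\ge 0}(2^{\nu+1}-1)2^{-\nu s}$ directly.

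Next I would substitute this Bell series into identity \eqref{form_Dir_altern} of Proposition~\ref{prop_1}. Since $D(\sigma,s)=\zeta(s)\zeta(s-1)$, the factor multiplying it becomes
\[
2\left(1-\frac1{2^s}\right)\left(1-\frac1{2^{s-1}}\right)-1,
\]
and expanding $\left(1-2^{-s}\right)\left(1-2^{1-s}\right)=1-3\cdot 2^{-s}+2\cdot 2^{-2s}$ and simplifying yields $1-\frac{6}{2^s}+\frac{4}{2^{2s}}$, which is exactly \eqref{D_altern_sigma}.

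There is no genuine obstacle here: the evaluation of the bracketed factor is a one-line algebraic simplification, and the only point deserving a word of care is the range of validity. The left-hand side converges (absolutely) for $\Re s>2$, so Proposition~\ref{prop_1} applies there in the stated ``case of convergence'' sense and the displayed identity holds for $\Re s>2$; the right-hand side of course furnishes the meromorphic continuation. As with $\varphi$ in Section~\ref{Sect_Alter_Dir_series}, one could instead verify \eqref{D_altern_sigma} directly from the Euler product of the multiplicative function $n\mapsto(-1)^{n-1}\sigma(n)$, but the route through Proposition~\ref{prop_1} is shorter.
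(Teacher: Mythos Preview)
Your proof is correct and follows exactly the paper's approach: invoke the classical identity $D(\sigma,s)=\zeta(s)\zeta(s-1)$ and apply Proposition~\ref{prop_1}. The paper's proof is in fact just a one-line pointer to these two ingredients, and you have simply written out the Bell-series computation and the algebraic simplification that the paper leaves implicit.
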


Note that $(-1)^{n-1}\sigma(n)$ is sequence \seqnum{A143348} in the OEIS \cite{OEIS}, where identity \eqref{D_altern_sigma} is given.

\begin{proof} Use the familiar formula
\begin{equation*}
\sum_{n=1}^{\infty} \frac{\sigma(n)}{n^s}  = \zeta(s)\zeta(s-1)
\quad (\Re s>2)
\end{equation*}
and Proposition \ref{prop_1}.
\end{proof}

\begin{theorem}
\begin{equation} \label{sum_sigma}
\sum_{n\le x} (-1)^{n-1} \sigma(n) = - \frac{\pi^2}{48} x^2 +
O\left( x (\log x)^{2/3} \right).
\end{equation}
\end{theorem}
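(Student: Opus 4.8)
The plan is to apply Proposition~\ref{Prop_f} with $f=\sigma$, so it suffices to check its three hypotheses and then read off the constant. For hypothesis (i) I would quote the classical estimate
$\sum_{n\le x}\sigma(n)=\frac{\pi^2}{12}x^2+O\!\left(x(\log x)^{2/3}\right)$,
due to Walfisz \cite{Wal1963}, which is the sharpest error term known; thus $C_\sigma=\pi^2/12$ and $R_\sigma(x)=(\log x)^{2/3}$, a nondecreasing function with $1\ll R_\sigma(x)=o(x)$.

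Next I would compute the Bell series of $\sigma$ at the prime $p=2$. Since $\sigma(2^\nu)=2^{\nu+1}-1$ for $\nu\ge 0$, summing two geometric series gives
$S_\sigma(x)=\sum_{\nu\ge 0}(2^{\nu+1}-1)x^\nu=\frac{2}{1-2x}-\frac{1}{1-x}=\frac{1}{(1-x)(1-2x)}$ for $|x|<\tfrac12$.
In particular $S_\sigma(1/4)$ converges (hypothesis (ii)), with $S_\sigma(1/4)=8/3$. The reciprocal power series is then the polynomial $\overline{S}_\sigma(x)=(1-x)(1-2x)=1-3x+2x^2$, so $b_0=1$, $b_1=-3$, $b_2=2$, and $b_\nu=0$ for $\nu\ge 3$; this sequence is trivially bounded, which is hypothesis (iii).

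Proposition~\ref{Prop_f} then yields
$\sum_{n\le x}(-1)^{n-1}\sigma(n)=C_\sigma\!\left(\frac{2}{S_\sigma(1/4)}-1\right)x^2+O\!\left(x(\log x)^{2/3}\right)$,
and the leading constant equals $\frac{\pi^2}{12}\!\left(2\cdot\frac{3}{8}-1\right)=\frac{\pi^2}{12}\cdot\!\left(-\frac14\right)=-\frac{\pi^2}{48}$, as claimed. As a consistency check, evaluating $1-6/2^s+4/2^{2s}$ from \eqref{D_altern_sigma} at $s=2$ also gives $-1/4=\frac{2}{S_\sigma(1/4)}-1$, in agreement with \eqref{const}. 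There is no genuine obstacle here: every step is a short computation, and the only nontrivial ingredient is the cited estimate for $\sum_{n\le x}\sigma(n)$.
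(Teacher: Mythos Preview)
Your proposal is correct and follows essentially the same approach as the paper: apply Proposition~\ref{Prop_f} with $f=\sigma$, invoke Walfisz's estimate for $\sum_{n\le x}\sigma(n)$, compute $S_\sigma(x)=\frac{1}{(1-x)(1-2x)}$ so that $\overline{S}_\sigma(x)=1-3x+2x^2$ has bounded (indeed eventually vanishing) coefficients, and read off the constant $-\pi^2/48$. Your consistency check via \eqref{D_altern_sigma} at $s=2$ mirrors the paper's own computation of the main-term coefficient.
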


\begin{proof} Apply Proposition \ref{Prop_f} for $f=\sigma$. It is known that
\begin{equation*}
\sum_{n\le x} \sigma(n) = \frac{\pi^2}{12} x^2 + O\left( x (\log
x)^{2/3} \right),
\end{equation*}
the best up to now. See Walfisz \cite[Satz 4, p.\
99]{Wal1963}. Here
\begin{equation*}
S_{\sigma}(x)= \sum_{\nu=0}^{\infty} \sigma(2^\nu) x^\nu  =
\sum_{\nu=0}^{\infty} (2^{\nu+1}-1) x^n =\frac1{(1-x)(1-2x)} \quad
\left(|x|< \frac1{2} \right).
\end{equation*}

Hence
\begin{equation*}
\overline{S}_{\sigma}(x)= (1-x)(1-2x)= 1-3x+2x^2,
\end{equation*}
for which the 
coefficients are $b_0=1$, $b_1 =-3$, $b_2=2$, $b_{\nu}=0$ ($\nu \ge 3$).
The coefficient of the main term in \eqref{sum_sigma} is from \eqref{D_altern_sigma},
\begin{equation*}
\frac{\pi^2}{12} \left[1-\frac{6}{2^s}+
\frac{4}{2^{2s}}\right]_{s=2} =- \frac{\pi^2}{48}.
\end{equation*}
\end{proof}

The following asymptotic formula is due to Sita~Ramaiah and
Suryanarayana \cite[Cor.\ 4.1]{SitSur1979}:
\begin{equation} \label{asymp_1_per_sigma}
\sum_{n\le x} \frac1{\sigma(n)} = E \left(\log x + \gamma + F
\right) + O\left( x^{-1} (\log x)^{2/3}(\log \log x)^{4/3} \right),
\end{equation}
where $\gamma$ is Euler's constant,
\begin{equation*}
E =\prod_{p\in \P} \alpha(p), \qquad  F= \sum_{p\in \P}
\frac{(p-1)^2 \beta(p)\log p}{p\alpha(p)},
\end{equation*}
\begin{equation*}
\alpha(p) = \left(1-\frac1{p} \right) \sum_{\nu=0}^{\infty} \frac1{\sigma(p^\nu)}=  1- \frac{(p-1)^2}{p} \sum_{j=1}^{\infty}
\frac1{(p^j-1)(p^{j+1}-1)},
\end{equation*}
\begin{equation*}
\beta(p) = \sum_{j=1}^{\infty} \frac{j}{(p^j-1)(p^{j+1}-1)}.
\end{equation*}

We prove the next result:
\begin{theorem} \label{Th_sum_1_sigma}
\begin{equation} \label{sum_1_sigma}
\sum_{n\le x} (-1)^{n-1} \frac1{\sigma(n)} = E\left( \left(\frac2{K} -1
\right) \left(\log x+ \gamma + F \right) +2(\log 2) \frac{K'}{K^2}\right)
\end{equation}
\begin{equation*}
+ O\left( x^{-1} (\log x)^{5/3}(\log \log x)^{4/3} \right),
\end{equation*}
where
\begin{equation} \label{const_Erdos}
K= \sum_{j=0}^{\infty} \frac1{2^{j+1}-1}, \qquad K'=
\sum_{j=1}^{\infty} \frac{j}{2^{j+1}-1}.
\end{equation}
\end{theorem}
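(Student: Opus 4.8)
The plan is to apply Proposition~\ref{Prop_1_per_f} with $f=\sigma$, taking the already quoted estimate \eqref{asymp_1_per_sigma} as hypothesis~(i); thus in the notation of that proposition $D_f=E$, $E_f=\gamma+F$, and $R_{1/f}(x)=(\log x)^{2/3}(\log\log x)^{4/3}$, which is nondecreasing and $o(x)$.

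First I would record the Bell series of $1/\sigma$ at the prime $2$. Since $\sigma(2^\nu)=2^{\nu+1}-1$,
\[
S_{1/\sigma}(x)=\sum_{\nu=0}^{\infty}\frac{x^\nu}{2^{\nu+1}-1},
\]
and because $1/(2^{\nu+1}-1)\sim 2^{-\nu-1}$ the radius of convergence is $r_{1/\sigma}=2>1$, which is hypothesis~(ii). For hypothesis~(iii) I would invoke Lemma~\ref{lemma_Kaluza}: the sequence $a_\nu=1/(2^{\nu+1}-1)$ is positive and log-convex, since $a_\nu^2\le a_{\nu-1}a_{\nu+1}$ reduces to $(2^{\nu}-1)(2^{\nu+2}-1)\le(2^{\nu+1}-1)^2$, i.e.\ to $-2^{\nu}\le 0$. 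Hence Kaluza's lemma gives $b_0=1$ and $|b_\nu|\le a_\nu/a_0^2=1/(2^{\nu+1}-1)\ll 2^{-\nu}$ for $\nu\ge 1$, so (iii) holds with $M=\tfrac12$. In this branch the error term \eqref{error_M} becomes $T_{1/\sigma}(x)=x^{-1}R_{1/\sigma}(x)\log x=x^{-1}(\log x)^{5/3}(\log\log x)^{4/3}$, matching the claimed error in \eqref{sum_1_sigma}.

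It then remains to identify the constants in the main term. Directly from the displayed series, $S_{1/\sigma}(1)=\sum_{j\ge 0}1/(2^{j+1}-1)=K$ and $S'_{1/\sigma}(1)=\sum_{\nu\ge 1}\nu/(2^{\nu+1}-1)=K'$, with $K,K'$ as in \eqref{const_Erdos}. Substituting $D_f=E$, $E_f=\gamma+F$, $S_{1/\sigma}(1)=K$, and $S'_{1/\sigma}(1)=K'$ into \eqref{asympt_altern_1_per_f} yields \eqref{sum_1_sigma} verbatim. There is no serious obstacle: the one step that needs genuine verification is the log-convexity of $\bigl(1/(2^{\nu+1}-1)\bigr)_{\nu\ge 0}$, since that is precisely what licenses the geometric bound on $b_\nu$ (and hence the use of the $M=\tfrac12$ case of \eqref{error_M}); everything else is bookkeeping.
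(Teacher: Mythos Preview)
Your argument is correct and follows essentially the same route as the paper: apply Proposition~\ref{Prop_1_per_f} with the Sita~Ramaiah--Suryanarayana estimate \eqref{asymp_1_per_sigma}, verify log-convexity of $\bigl(1/(2^{\nu+1}-1)\bigr)_{\nu\ge0}$ so that Lemma~\ref{lemma_Kaluza} yields $b_\nu\ll 2^{-\nu}$ and hence $M=\tfrac12$, and then read off $S_{1/\sigma}(1)=K$, $S'_{1/\sigma}(1)=K'$. If anything, you are slightly more explicit than the paper in checking the log-convexity inequality and the radius of convergence.
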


The result \eqref{sum_1_sigma} improves the error term $O\left(
x^{-1} (\log x)^4 \right)$ obtained by Bordell\`{e}s and Cloitre
\cite[Cor.\ 4, (v)]{BorClo2013}. Here $K \doteq 1.606695$ is the
Erd\H{o}s-Borwein constant, known to be irrational. See sequence
\seqnum{A065442} in the OEIS \cite{OEIS}.

\begin{proof} Apply Proposition \ref{Prop_1_per_f} for $f=\sigma$, using formula
\eqref{asymp_1_per_sigma}. Now
\begin{equation*}
S_{1/\sigma}(x)= \sum_{\nu=0}^{\infty} \frac{x^\nu}{\sigma(2^\nu)} =
\sum_{\nu=0}^{\infty} \frac{x^\nu}{2^{\nu+1}-1}
\end{equation*}
and $S_{1/\sigma}(1)= K$. Note that
$S'_{1/\psi}(1)=K'$, given above.

The coefficients $b_{\nu}$ of the reciprocal power series are
$b_0=1$, $b_1=-\frac1{3}$, $b_2=-\frac{2}{63}$, $b_3=-\frac{8}{945}$, etc. Observe that
the sequence $\left(\frac1{2^{\nu+1}- 1}\right)_{\nu \ge 0}$
is log-convex.  Therefore, according to Lemma \ref{lemma_Kaluza},
\begin{equation*}
- \frac1{2^{\nu+1}-1} \le b_{\nu} \le 0 \quad (\nu\ge 1),
\end{equation*}
which shows that $b_{\nu}\ll 2^{-\nu}$ and we can choose $M=1/2$.
\end{proof}

\subsection{Divisor function}

Now consider another classical function, the divisor function
$\tau(n)=\sum_{d\mid n} 1$ ($n\ge 1$). Using the familiar formula
\begin{equation*}
\sum_{n=1}^{\infty} \frac{\tau(n)}{n^s}
= \zeta^2(s) \quad (\Re s>1),
\end{equation*}
and Proposition \ref{prop_1} we deduce
\begin{proposition}
\begin{equation*}
\sum_{n=1}^{\infty} (-1)^{n-1} \frac{\tau(n)}{n^s} =
\left(1-\frac{4}{2^s}+\frac{2}{2^{2s}}\right) \zeta^2(s) \quad (\Re
s>1).
\end{equation*}
\end{proposition}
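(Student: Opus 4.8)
The plan is to invoke Proposition~\ref{prop_1} with $f=\tau$, exactly as was done above for $\varphi$, $\psi$, and $\sigma$. First I would record the local factor at the prime $2$: since $\tau(2^\nu)=\nu+1$, the relevant sum is a geometric-type series,
\[
\sum_{\nu=0}^{\infty} \frac{\tau(2^\nu)}{2^{\nu s}} = \sum_{\nu=0}^{\infty} (\nu+1) x^{\nu} = \frac{1}{(1-x)^2}, \qquad x=2^{-s},
\]
valid for $|x|<1$, hence for $\Re s>0$ and in particular on the half-plane $\Re s>1$ of interest. Combined with the classical identity $D(\tau,s)=\zeta^2(s)$ (absolutely convergent for $\Re s>1$), this supplies both ingredients needed in \eqref{form_Dir_altern}.

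Next I would substitute into \eqref{form_Dir_altern}. Taking reciprocals of the local factor gives $\bigl(\sum_{\nu}\tau(2^\nu)2^{-\nu s}\bigr)^{-1} = (1-2^{-s})^2$, so
\[
\sum_{n=1}^{\infty} (-1)^{n-1}\frac{\tau(n)}{n^s} = \zeta^2(s)\left(2\left(1-\tfrac{1}{2^s}\right)^{2} - 1\right).
\]
Expanding the bracket, $2(1-2^{-s})^2-1 = 2 - 4\cdot 2^{-s} + 2\cdot 2^{-2s} - 1 = 1 - \tfrac{4}{2^s} + \tfrac{2}{2^{2s}}$, which is precisely the asserted factor. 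This is the whole computation; it mirrors the $\varphi$, $\psi$, $\sigma$ cases and involves no new idea.

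There is essentially no obstacle here, and the only point that deserves a word is convergence. The Dirichlet series on the left is dominated by $\sum_{n} \tau(n)/n^{\Re s}$, which converges absolutely for $\Re s>1$, and $\zeta^2(s)$ likewise converges there; since Proposition~\ref{prop_1} was stated to hold "formally or in case of convergence," the Euler-product rearrangement underlying \eqref{form_Dir_altern} is legitimate throughout $\Re s>1$. (One could remark that the left-hand series in fact converges in the wider region $\Re s>0$ by partial summation against $\sum_{n\le x}(-1)^{n-1}\tau(n)$, but the stated range $\Re s>1$ is all that is claimed and it is exactly where the right-hand side is meaningful.)
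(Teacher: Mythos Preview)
Your proof is correct and follows exactly the same approach as the paper: apply Proposition~\ref{prop_1} to $f=\tau$ using $D(\tau,s)=\zeta^2(s)$ and the local factor $\sum_{\nu\ge 0}\tau(2^\nu)2^{-\nu s}=(1-2^{-s})^{-2}$, then simplify. Your write-up merely spells out the algebra that the paper leaves implicit.
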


By similar considerations we also have

\begin{proposition}
\begin{equation*}
\sum_{n=1}^{\infty} (-1)^{n-1} \frac1{\tau(n)n^s}  =
\left(\frac1{2^{s-1}}\left(\log
\left(1-\frac1{2^s}\right)\right)^{-1} +1 \right) \prod_{p\in \P}
p^s \log \left(1-\frac1{p^s}\right) \quad (\Re s>1).
\end{equation*}
\end{proposition}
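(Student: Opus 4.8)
The plan is to apply Proposition~\ref{prop_1} directly to the multiplicative function $f=1/\tau$, exactly in the spirit of the preceding propositions. The first step is to record the local factors: since $\tau(p^{\nu})=\nu+1$ for every prime $p$ and every $\nu\ge 0$, we have $f(p^{\nu})=1/(\nu+1)$, so the series attached to $p$ in the Euler product \eqref{multipl_Euler} is $\sum_{\nu=0}^{\infty}\frac{1}{(\nu+1)p^{\nu s}}$.

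The only step that is not pure bookkeeping is to evaluate this series in closed form. Shifting the index in $\sum_{\nu=1}^{\infty}x^{\nu}/\nu=-\log(1-x)$ (valid for $|x|<1$) gives
\[
\sum_{\nu=0}^{\infty}\frac{x^{\nu}}{\nu+1}=-\frac{\log(1-x)}{x}\qquad(|x|<1),
\]
so that, taking $x=p^{-s}$,
\[
\sum_{\nu=0}^{\infty}\frac{f(p^{\nu})}{p^{\nu s}}=-p^{s}\log\left(1-\frac1{p^{s}}\right).
\]
Multiplying over all primes yields $\sum_{n=1}^{\infty}\frac{1}{\tau(n)n^{s}}=\prod_{p\in\P}\bigl(-p^{s}\log(1-p^{-s})\bigr)$; each factor here equals $1+\tfrac12 p^{-s}+O(p^{-2s})$, so the product converges absolutely for $\RE s>1$, and the whole computation is valid in that half-plane, not merely formally.

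Finally I would substitute into \eqref{form_Dir_altern}. The factor at $p=2$ is $\sum_{\nu\ge 0}f(2^{\nu})/2^{\nu s}=-2^{s}\log(1-2^{-s})$, so
\[
2\left(\sum_{\nu=0}^{\infty}\frac{f(2^{\nu})}{2^{\nu s}}\right)^{-1}-1
=\frac{-2}{2^{s}\log(1-2^{-s})}-1
=-\left(\frac1{2^{s-1}}\Bigl(\log(1-2^{-s})\Bigr)^{-1}+1\right),
\]
and multiplying this by $\prod_{p\in\P}\bigl(-p^{s}\log(1-p^{-s})\bigr)$, after a rearrangement of signs, gives the asserted identity. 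There is essentially no obstacle beyond the power-series identity above; the only point requiring a little care is the sign bookkeeping, since it is the positive quantity $-p^{s}\log(1-p^{-s})$ that forms the convergent Euler product, its overall sign then cancelling the minus produced by the local factor at $p=2$.
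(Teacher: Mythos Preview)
Your argument is correct and is exactly what the paper intends: the paper offers no separate proof here but simply writes ``By similar considerations we also have,'' meaning one applies Proposition~\ref{prop_1} to $f=1/\tau$ via the identity $\sum_{\nu\ge 0}x^{\nu}/(\nu+1)=-x^{-1}\log(1-x)$, precisely as you do. Your caution about the signs is well placed; in fact the product $\prod_{p}p^{s}\log(1-p^{-s})$ as literally written in the statement consists of factors tending to $-1$ and so does not converge---the genuinely convergent Euler product is your $\prod_{p}\bigl(-p^{s}\log(1-p^{-s})\bigr)$, and the displayed identity must be read with that understanding rather than via any actual ``rearrangement of signs.''
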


\begin{theorem}
\begin{equation*}
\sum_{n\le x} (-1)^{n-1} \tau(n) = - \frac1{2} x\log x + \left(\frac1{2}-\gamma + \log 2\right) x + O\left( x^{\theta +\varepsilon}
\right),
\end{equation*}
where $\theta$ is the best exponent in Dirichlet's divisor problem.
\end{theorem}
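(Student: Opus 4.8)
The plan is to proceed as in the proof of Proposition~\ref{Prop_f}, except that here the summatory function of $\tau$ has main term of order $x\log x$ rather than $x^2$, so instead of invoking that proposition (whose error analysis is adapted to the quadratic case) I apply the convolution identity~\eqref{sum} directly. First I record the Bell series of $\tau$ at the prime $2$: since $\tau(2^\nu)=\nu+1$ for $\nu\ge 0$, we have
\begin{equation*}
S_\tau(x)=\sum_{\nu\ge 0}(\nu+1)x^\nu=\frac1{(1-x)^2}\qquad (|x|<1),
\end{equation*}
so the formal reciprocal series is the polynomial $\overline{S}_\tau(x)=(1-x)^2=1-2x+x^2$. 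Hence $b_0=1$, $b_1=-2$, $b_2=1$, and $b_\nu=0$ for all $\nu\ge 3$, which means the multiplicative function $h_\tau$ of~\eqref{convo} is supported on $\{1,2,4\}$ with $h_\tau(1)=1$, $h_\tau(2)=2b_1=-4$, and $h_\tau(4)=2b_2=2$.

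Consequently~\eqref{sum} reduces to the finite identity
\begin{equation*}
\sum_{n\le x}(-1)^{n-1}\tau(n)=\sum_{j\le x}\tau(j)-4\sum_{j\le x/2}\tau(j)+2\sum_{j\le x/4}\tau(j).
\end{equation*}
Into each of the three sums I substitute the Dirichlet divisor estimate $\sum_{j\le y}\tau(j)=y\log y+(2\gamma-1)y+O(y^{\theta+\varepsilon})$, where $\theta$ is the Dirichlet divisor exponent; since $\theta+\varepsilon<1$, each of the resulting error terms is $O(x^{\theta+\varepsilon})$. Collecting the main contributions, the coefficient of $x\log x$ is $1-4\cdot\tfrac12+2\cdot\tfrac14=-\tfrac12$, while the terms arising from $-4\cdot\tfrac{x}{2}\log\tfrac12$ and $2\cdot\tfrac{x}{4}\log\tfrac14$ (which give $2x\log2$ and $-x\log2$) together with the linear terms $(2\gamma-1)x-2(2\gamma-1)x+\tfrac12(2\gamma-1)x=-\tfrac12(2\gamma-1)x$ combine to $(\tfrac12-\gamma+\log2)x$. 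This yields the asserted formula with error $O(x^{\theta+\varepsilon})$.

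There is no substantial obstacle here: because $\overline{S}_\tau$ is a polynomial, $h_\tau$ has finite support and no estimation of reciprocal power-series coefficients is required (in contrast to cases such as $1/\sigma$ or $1/\sigma^{**}$). The only care needed is the bookkeeping of the lower-order term and verifying that the $O$-terms coming from the three rescaled divisor sums are uniformly $O(x^{\theta+\varepsilon})$. As a consistency check, the same leading behavior can be read off from the alternating Dirichlet series $\sum_{n\ge 1}(-1)^{n-1}\tau(n)n^{-s}=(1-4\cdot 2^{-s}+2\cdot 2^{-2s})\zeta^2(s)$: at $s=1$ the parenthetical factor equals $1-2+\tfrac12=-\tfrac12$, which is precisely the coefficient attached to the double pole of $\zeta^2(s)$ there, so a Perron-type argument would recover the same main term; the convolution route above is shorter and delivers the sharp error term directly.
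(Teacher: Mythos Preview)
Your proof is correct and follows the same route as the paper: since the main term for $\sum_{n\le x}\tau(n)$ is of order $x\log x$, Proposition~\ref{Prop_f} does not apply directly, and one falls back on the convolution identity~\eqref{sum} with the explicit (polynomial) reciprocal Bell series $\overline{S}_\tau(x)=(1-x)^2$, then substitutes the Dirichlet divisor estimate and collects terms. The paper merely says ``the result follows by similar arguments''; your write-up makes those arguments explicit and the arithmetic checks out.
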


\begin{proof} Proposition \ref{Prop_f} cannot be applied. Using that
\begin{equation*}
\sum_{n\le x} \tau(n) = x\log x + (2\gamma -1)x + O\left( x^{\theta
+\varepsilon} \right)
\end{equation*}
the result follows by similar arguments.
\end{proof}

Note that the actual best result for $\theta$ is $\theta = 131/416 \doteq 0.314903$, due to
Huxley \cite{Hux2003}.

Now we consider the following result, which goes back to the work of
Ramanujan \cite[Eq.\ (7)]{Ram1916}. See Wilson \cite[Sect.\
3]{Wil1922} for its proof:
\begin{equation*}
\sum_{n\le x} \frac1{\tau(n)} = x \sum_{j=1}^N \frac{A_j}{(\log x)^{j-1/2}}  + O\left( \frac{x}{(\log x)^{N+1/2}} \right),
\end{equation*}
valid for every real $x\ge 2$ and every fixed integer $N\ge 1$ where $A_j$ ($1\le j \le N$) are computable constants,
\begin{equation*}
A_1 = \frac1{\sqrt{\pi}} \prod_{p\in \P} \left(\sqrt{p^2-p} \ \log \left(\frac{p}{p-1}\right) \right).
\end{equation*}

We prove

\begin{theorem} \label{th_1_per_tau}
\begin{equation*}
\sum_{n\le x} (-1)^{n-1} \frac1{\tau(n)} = x \sum_{t=1}^N \frac{B_t}{(\log x)^{t-1/2}}  + O\left( \frac{x}{(\log x)^{N+1/2}} \right),
\end{equation*}
valid for every real $x\ge 2$ and every fixed integer $N\ge 1$ where $B_t$ ($1\le t \le N$) are computable constants. In particular,
\begin{equation*}
B_1 = A_1 \left(\frac1{\log 2}-1 \right).
\end{equation*}
\end{theorem}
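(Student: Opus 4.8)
The plan is to apply the convolution identity \eqref{sum} with $f=1/\tau$ and to insert the Ramanujan--Wilson expansion of $\sum_{j\le y}1/\tau(j)$. The point to keep in mind is that here the relevant Bell series $S_{1/\tau}$ has radius of convergence only $1$, so Proposition~\ref{Prop_1_per_f} is not applicable and the dependence on $\nu$ inside $\log(x/2^\nu)$ must be expanded by hand.

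I would first record the power series data. We have $S_{1/\tau}(x)=\sum_{\nu\ge0}x^\nu/\tau(2^\nu)=\sum_{\nu\ge0}x^\nu/(\nu+1)=-x^{-1}\log(1-x)$ for $|x|<1$. The sequence $(1/(\nu+1))_{\nu\ge0}$ is log-convex, so Lemma~\ref{lemma_Kaluza} gives that the coefficients $b_\nu$ of $\overline{S}_{1/\tau}$ satisfy $b_0=1$ and $-1/(\nu+1)\le b_\nu\le0$ for $\nu\ge1$. Since $h_{1/\tau}(2^\nu)=2b_\nu$ for $\nu\ge1$ and $h_{1/\tau}(1)=1$, this yields $\sum_{\nu\ge0}|h_{1/\tau}(2^\nu)|\,\nu^m2^{-\nu}<\infty$ for every fixed $m\ge0$ and $\sum_{\nu>V}|h_{1/\tau}(2^\nu)|\,2^{-\nu}\ll2^{-V}$. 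Moreover, by \eqref{const} with $s=1$ and $S_{1/\tau}(1/2)=\sum_{\nu\ge0}1/((\nu+1)2^\nu)=2\log2$, I obtain $\sum_{\nu\ge0}h_{1/\tau}(2^\nu)2^{-\nu}=2/S_{1/\tau}(1/2)-1=1/\log2-1$.

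Next, by \eqref{sum}, $\sum_{n\le x}(-1)^{n-1}/\tau(n)=\sum_{2^\nu\le x}h_{1/\tau}(2^\nu)\sum_{j\le x/2^\nu}1/\tau(j)$, and I split the outer sum at $\nu_0:=\lfloor\tfrac12\log_2x\rfloor$. Bounding $\sum_{j\le y}1/\tau(j)\le y$, the range $\nu>\nu_0$ contributes $\ll x\sum_{\nu>\nu_0}|h_{1/\tau}(2^\nu)|2^{-\nu}\ll x\,2^{-\nu_0}\ll x^{1/2}$. For $\nu\le\nu_0$ one has $x/2^\nu\ge\sqrt x\ge2$ and $\log(x/2^\nu)\ge\tfrac12\log x$, so the Ramanujan--Wilson formula applies with $y=x/2^\nu$, and its error term contributes $\ll\frac{x}{(\log x)^{N+1/2}}\sum_{\nu\le\nu_0}|h_{1/\tau}(2^\nu)|2^{-\nu}\ll\frac{x}{(\log x)^{N+1/2}}$.

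There remains the main term $x\sum_{\nu\le\nu_0}h_{1/\tau}(2^\nu)2^{-\nu}\sum_{k=1}^N A_k(\log(x/2^\nu))^{-(k-1/2)}$. Writing $\log(x/2^\nu)=(\log x)(1-u_\nu)$ with $u_\nu=\nu\log2/\log x\in[0,\tfrac12]$ and Taylor-expanding $(1-u_\nu)^{-(k-1/2)}=\sum_{m=0}^N\binom{k-3/2+m}{m}u_\nu^m+O_{k,N}(u_\nu^{N+1})$, the Taylor remainder and the terms with $k+m>N$ each contribute $\ll x/(\log x)^{N+1/2}$ (using $\sum_\nu|h_{1/\tau}(2^\nu)|\nu^{N+1}2^{-\nu}<\infty$), and completing each surviving $\nu$-sum to $\nu=\infty$ changes the estimate only by $\ll x^{1/2+\varepsilon}$. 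Grouping the remaining terms by $t:=k+m$ produces the asserted expansion with $B_t=\sum_{k=1}^t A_k(\log2)^{t-k}\binom{t-3/2}{t-k}\sum_{\nu\ge0}h_{1/\tau}(2^\nu)\nu^{t-k}2^{-\nu}$; for $t=1$ only $k=1$, $m=0$ survives, so by the computation above $B_1=A_1\sum_{\nu\ge0}h_{1/\tau}(2^\nu)2^{-\nu}=A_1(1/\log2-1)$. The one genuine subtlety, which I expect to be the main obstacle, is the divergence of $S_{1/\tau}(1)$: this is why the result is a true asymptotic series in $1/\log x$ rather than a single main term, and why the $\nu$-dependence in $\log(x/2^\nu)$ cannot be frozen; Kaluza's log-convexity bound on $b_\nu$ together with the weight $2^{-\nu}$ is precisely what keeps all the resulting $\nu$-sums convergent.
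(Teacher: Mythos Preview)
Your argument is correct and follows essentially the same route as the paper: apply the convolution \eqref{sum} with $f=1/\tau$, invoke Kaluza's lemma for the bound $|b_\nu|\le 1/(\nu+1)$, insert the Ramanujan--Wilson expansion, and Taylor-expand $(1-\nu\log 2/\log x)^{-(k-1/2)}$. The only cosmetic difference is your cutoff at $\nu_0=\lfloor\tfrac12\log_2 x\rfloor$ (giving a tail $\ll x^{1/2}$) versus the paper's cutoff at $d\le x/2$ (tail $\ll\log\log x$); both are absorbed by the stated error, and your truncated Taylor expansion is in fact a bit cleaner than the paper's use of the full series followed by term identification.
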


\begin{proof}
Now
\begin{equation*}
S_{1/\tau}(x)=\sum_{\nu=0}^{\infty} \frac1{\tau(2^{\nu})} x^{\nu}= \sum_{\nu=0}^{\infty} \frac1{\nu+1} x^{\nu}= -\frac{\log (1-x)}{x} \ \quad (|x|<1)
\end{equation*}
and the reciprocal power series is
\begin{equation*}
\overline{S}_{1/\tau}(x) = - \frac{x}{\log (1-x)} = \sum_{\nu
=0}^{\infty} b_{\nu}x^{\nu},
\end{equation*}
where $b_0=1$, $b_1=-1/2$, $b_2=-1/12$, $b_3=-1/24$, etc.
Note that the sequence $\left(\frac1{\nu+1}\right)_{\nu \ge 0}$ is log-convex.
According to Lemma \ref{lemma_Kaluza} (this example was considered by
Kaluza \cite{Kal1928}),
\begin{equation*}
- \frac1{\nu+1} \le b_{\nu} \le 0 \qquad (\nu \ge 1).
\end{equation*}

This shows, using \eqref{sum}, that
\begin{equation*}
(-1)^{n-1} \frac1{\tau(n)} = \sum_{dj=n} h_{1/\tau}(d) \frac1{\tau(j)} \qquad (n \ge 1),
\end{equation*}
where the function $h_{1/\tau}$ is multiplicative, $h_{1/\tau}(2^{\nu})\ll \frac1{\nu}$ as $\nu \to \infty$ and $h_{1/\tau}(p^\nu)=0$ for
every prime $p>2$ and $\nu \ge 1$.

Hence
\begin{equation*}
T(x):= \sum_{n\le x} (-1)^{n-1} \frac1{\tau(n)} = \sum_{d\le x/2} h_{1/\tau}(d) \sum_{j\le x/d} \frac1{\tau(j)} + \sum_{x/2<d\le x} h_{1/\tau}(d)
\end{equation*}
\begin{equation*}
= \sum_{d\le x/2} h_{1/\tau}(d) \left(\frac{x}{d} \sum_{j=1}^N \frac{A_j}{(\log (x/d))^{j-1/2}}  + O\left( \frac{x/d}{(\log (x/d))^{N+1/2}}
\right)\right)
+ \sum_{x/2<d\le x} h_{1/\tau}(d)
\end{equation*}
\begin{gather*}
= x \sum_{j=1}^N \frac{A_j}{(\log x)^{j-1/2}}  \sum_{d\le x/2} \frac{h_{1/\tau}(d)}{d (1-\frac{\log d}{\log x})^{j-1/2}}
+ O\left( \frac{x}{(\log x)^{N+1/2}} \sum_{d\le x/2} \frac{|h_{1/\tau}(d)|}{d (1-\frac{\log d}{\log x})^{N+1/2}} \right)
\\ + \sum_{x/2<d\le x} h_{1/\tau}(d).
\end{gather*}

Here the last term is small:
\begin{equation*}
\sum_{x/2< d\le x} h_{1/\tau}(d) \ll \sum_{d=2^{\nu}\le x}  |h_{1/\tau}(2^{\nu})| \ll
\sum_{\nu \le \log x/\log 2}  \frac1{\nu} \ll \log \log x.
\end{equation*}

Using the power series expansion
\begin{equation*}
(1+x)^t = \sum_{j=0}^{\infty} \binom{t}{j} x^j  \qquad (x,t\in \R, |x|<1),
\end{equation*}
we deduce
\begin{gather*}
\sum_{d\le x/2} \frac{|h_{1/\tau}(d)|}{d (1-\frac{\log d}{\log x})^{N+1/2}} = \sum_{d\le x/2} \frac{|h_{1/\tau}(d)|}{d}
\left(1+O\left(\frac{\log d}{\log x} \right) \right) \\
= \sum_{d=2^{\nu}\le x/2} \frac{|h_{1/\tau}(2^{\nu})|}{2^{\nu}} + O\left(\frac1{\log x} \sum_{d=2^{\nu} \le x/2}
\frac{|h_{1/\tau}(2^{\nu})|}{2^{\nu}} \log 2^{\nu} \right) \\
\ll \sum_{2^\nu \le x/2} \frac1{\nu 2^{\nu}} + \frac1{\log x} \sum_{2^\nu \le x/2}
\frac1{2^{\nu}} \ll 1.
\end{gather*}

Therefore, the remainder term of above is
\begin{gather*}
O \left( \frac{x}{(\log x)^{N+1/2}} \right).
\end{gather*}

Furthermore,
\begin{gather*}
\sum_{d\le x/2} \frac{h_{1/\tau}(d)}{d (1-\frac{\log d}{\log x})^{j-1/2}}
= \sum_{d\le x/2} \frac{h_{1/\tau}(d)}{d} \sum_{\ell=0}^{\infty} (-1)^{\ell} \binom{-j+1/2}{\ell} \left(\frac{\log d}{\log x}\right)^{\ell}
\\ = \sum_{\ell=0}^{\infty} (-1)^{\ell} \binom{-j+1/2}{\ell} \frac1{(\log x)^{\ell}} \sum_{d\le x/2} \frac{h_{1/\tau}(d)}{d} (\log d)^{\ell}
\\ = \sum_{\ell=0}^{\infty} (-1)^{\ell} \binom{-j+1/2}{\ell} \frac1{(\log x)^{\ell}} \left(K_{\ell} +
O \left(\frac{(\log x)^{\ell-1}}{x} \right) \right),
\end{gather*}
where for every $\ell \ge 0$ the series
\begin{gather*}
K_{\ell}: = \sum_{d=1}^{\infty} \frac{h_{1/\tau}(d)}{d} (\log d)^{\ell} = \sum_{\substack{d=2^{\nu} \\ \nu \ge 0}}
\frac{h_{1/\tau}(2^{\nu})}{2^\nu} (\log 2^\nu)^{\ell}
\end{gather*}
is absolutely convergent, since  $|h_{1/\tau}(2^{\nu})|\ll \frac1{\nu}$, and
\begin{gather*}
\sum_{d>x/2} \frac{|h_{1/\tau}(d)|}{d} (\log d)^{\ell} =
\sum_{d=2^{\nu} > x/2} \frac{|h_{1/\tau}(2^{\nu})|}{2^\nu} (\log 2^\nu)^{\ell} \ll  \sum_{\nu > \log x/\log 2} \frac{\nu^{\ell-1}}{2^{\nu}} \ll
\frac{(\log x)^{\ell-1}}{x}.
\end{gather*}

We deduce that
\begin{gather*}
T(x) = x \sum_{j=1}^N \frac{A_j}{(\log x)^{j-1/2}} \sum_{\ell=0}^{\infty} (-1)^{\ell} \binom{-j+1/2}{\ell} \frac1{(\log x)^{\ell}}
\left(K_{\ell} + O \left(\frac{(\log x)^{\ell-1}}{x} \right) \right)
\\ + O\left( \frac{x}{(\log x)^{N+1/2}} \right) \\
= x \sum_{t=1}^N \frac1{(\log x)^{t-1/2}} \sum_{j=1}^N (-1)^{t-j} \binom{-j+1/2}{t-j}  A_jK_{t-j}
+  O\left( \frac{x}{(\log x)^{N+1/2}} \right).
\end{gather*}

The proof is complete by denoting
\begin{equation*}
B_t = \sum_{j=1}^N (-1)^{t-j} \binom{-j+1/2}{t-j} A_j K_{t-j},
\end{equation*}
where $B_1=A_1K_0=A_1(\frac1{\log 2}-1)$ by \eqref{const} (applied for $s=1$).
\end{proof}

Note that a similar asymptotic formula can be deduced for the alternating sum
\begin{equation*}
\sum_{n\le x} (-1)^{n-1} \frac1{\tau_k(n)},
\end{equation*}
where $\tau_k(n)$ is the Piltz divisor function, based on the result for $\sum_{n\le x}
\frac1{\tau_k(n)}$, due to De~Koninck and Ivi\'c \cite[Thm.\
1.2]{DeKIvi1980}.

\subsection{Gcd-sum function} \label{Subsection_gcd}

Let $P(n)=\sum_{k=1}^n \gcd(k,n)$ be the gcd-sum function. 
Known results include the following:
$P$ is multiplicative, $P(p^\nu)=p^{\nu-1}(p(\nu+1)-\nu)$ ($\nu \ge 1$),
\begin{equation*}
\sum_{n=1}^{\infty} \frac{P(n)}{n^s}  = \frac{\zeta^2(s-1)}{\zeta(s)} \quad (\Re s>2),
\end{equation*}
\begin{equation*}
\sum_{n\le x} P(n) = \frac{3}{\pi^2} x^2 \left(\log x +
2\gamma -\frac1{2}-\frac{\zeta'(2)}{\zeta(2)} \right) + O\left( x^{1+\theta+\varepsilon} \right),
\end{equation*}
where $\theta$ is the best exponent in Dirichlet's divisor problem. See the survey of the author \cite{Tot2010GCD}.
We have

\begin{proposition}
\begin{equation*}
\sum_{n=1}^{\infty} (-1)^{n-1} \frac{P(n)}{n^s} =\left(2\left(1-\frac1{2^{s-1}}\right)^2 \left(1-\frac1{2^s}\right)^{-1}  - 1\right)
\frac{\zeta^2(s-1)}{\zeta(s)} \quad (\Re s>2).
\end{equation*}
\end{proposition}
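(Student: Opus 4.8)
The plan is to invoke Proposition~\ref{prop_1} with $f=P$ and the already-recorded Dirichlet series $D(P,s)=\zeta^2(s-1)/\zeta(s)$ (valid for $\Re s>2$). By \eqref{form_Dir_altern},
\begin{equation*}
\sum_{n=1}^{\infty} (-1)^{n-1} \frac{P(n)}{n^s} = \frac{\zeta^2(s-1)}{\zeta(s)} \left( 2\left( \sum_{\nu=0}^{\infty}\frac{P(2^\nu)}{2^{\nu s}} \right)^{-1} -1\right),
\end{equation*}
so the whole problem reduces to evaluating the Bell series of $P$ at the prime $2$, namely $\sum_{\nu\ge 0} P(2^\nu)2^{-\nu s}$, and inverting it.

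For that local factor I would argue exactly as was done for $\varphi$ in Section~\ref{Sect_Alter_Dir_series} (see \eqref{varphi_p}): comparing the Euler product $D(P,s)=\prod_{p\in\P}\sum_{\nu\ge 0}P(p^\nu)p^{-\nu s}$ with the factorization $\zeta^2(s-1)/\zeta(s)=\prod_{p\in\P}(1-p^{1-s})^{-2}(1-p^{-s})$ gives, factor by factor,
\begin{equation*}
\sum_{\nu=0}^{\infty} \frac{P(p^\nu)}{p^{\nu s}} = \left(1-\frac1{p^{s-1}}\right)^{-2}\left(1-\frac1{p^s}\right) \qquad (p\in\P).
\end{equation*}
(Alternatively one can obtain this directly from $P(p^\nu)=p^{\nu-1}\bigl(p(\nu+1)-\nu\bigr)$ by writing $P(p^\nu)p^{-\nu s}=(\nu+1)z^\nu-\tfrac1p\nu z^\nu$ with $z=p^{1-s}$ and summing the two standard series $\sum(\nu+1)z^\nu$ and $\sum \nu z^\nu$; the Euler-product comparison is quicker.) Taking $p=2$ and inverting,
\begin{equation*}
\left( \sum_{\nu=0}^{\infty}\frac{P(2^\nu)}{2^{\nu s}} \right)^{-1} = \left(1-\frac1{2^{s-1}}\right)^{2}\left(1-\frac1{2^s}\right)^{-1},
\end{equation*}
which is legitimate for $\Re s>2$, since then $|2^{s-1}|>2$ and $|2^{s}|>4$ so neither factor vanishes.

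Substituting this into the identity from Proposition~\ref{prop_1} yields the asserted formula. Convergence of the left-hand side in the stated range is inherited from the absolute convergence of $\sum_{n}P(n)/n^s$ for $\Re s>2$ (itself a consequence of that of $\zeta(s-1)$), so the rearrangements are valid and not merely formal. I do not anticipate any genuine difficulty: the only computation of substance is the value of the $p=2$ Bell factor, and that is settled in one line by Euler-product matching, precisely as in the $\varphi$ case.
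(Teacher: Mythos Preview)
Your proposal is correct and follows exactly the approach the paper uses throughout: apply Proposition~\ref{prop_1} with the known Dirichlet series $D(P,s)=\zeta^2(s-1)/\zeta(s)$, read off the $p=2$ Bell factor from the Euler product, and substitute. The paper does not spell out this particular proof, but your argument is precisely the intended one-line application.
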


\begin{theorem}
\begin{equation*}
\sum_{n\le x} (-1)^{n-1} P(n) = - \frac1{\pi^2} x^2\left(\log x +
2\gamma -\frac1{2}-\frac{\zeta'(2)}{\zeta(2)}-\frac{10}{3}\log 2
\right) + O\left( x^{1+\theta+\varepsilon} \right),
\end{equation*}
where $\theta$ is the best exponent in Dirichlet's divisor problem.
\end{theorem}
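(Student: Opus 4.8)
The plan is to apply the convolution method of Section~\ref{Subsect_Method} directly, rather than quoting Proposition~\ref{Prop_f}: since $\sum_{n\le x}P(n)$ has a main term of order $x^2\log x$ rather than $x^2$, one needs a mild variant of the proof of that proposition (just as Theorem~\ref{th_1_per_tau} required a variant of Proposition~\ref{Prop_1_per_f}). First I would determine the Bell series of $P$ for $p=2$. From $P(2^\nu)=2^{\nu-1}(\nu+2)$ for $\nu\ge1$ and $P(1)=1$, summing the elementary series gives
\[
S_P(x)=1+\sum_{\nu\ge1}2^{\nu-1}(\nu+2)x^\nu=\frac{1-x}{(1-2x)^2}\qquad\Bigl(|x|<\tfrac12\Bigr),
\]
hence the reciprocal power series is
\[
\overline{S}_P(x)=\frac{(1-2x)^2}{1-x}=1-3x+\sum_{\nu\ge2}x^\nu\qquad(|x|<1),
\]
so that $b_0=1$, $b_1=-3$ and $b_\nu=1$ for $\nu\ge2$. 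In particular $(b_\nu)_{\nu\ge0}$ is bounded, hence so is the multiplicative function $h_P$ of~\eqref{convo}, which is supported on the powers of~$2$.

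Next I would insert the known estimate $\sum_{j\le y}P(j)=\frac{3}{\pi^2}y^2(\log y+c)+O(y^{1+\theta+\varepsilon})$, where $c:=2\gamma-\tfrac12-\zeta'(2)/\zeta(2)$, into~\eqref{sum}, obtaining
\[
\sum_{n\le x}(-1)^{n-1}P(n)=\sum_{d\le x}h_P(d)\Bigl(\tfrac{3}{\pi^2}\tfrac{x^2}{d^2}\bigl(\log x-\log d+c\bigr)+O\bigl((x/d)^{1+\theta+\varepsilon}\bigr)\Bigr).
\]
Since $h_P$ is bounded and supported on $d=2^\nu$, the sum $\sum_{d\le x}|h_P(d)|d^{-(1+\theta+\varepsilon)}$ converges, so the $O$-term contributes $O(x^{1+\theta+\varepsilon})$. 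For the main term I would extend the two finite sums $\sum_{d\le x}h_P(d)d^{-2}$ and $\sum_{d\le x}h_P(d)(\log d)d^{-2}$ to infinity; the tails are $\ll x^{-2}$ and $\ll x^{-2}\log x$ (geometric in $\nu$), contributing only $O(\log x)$, which is absorbed into the error term.

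Finally I would evaluate the constants via~\eqref{const} and~\eqref{const_diff} at $s=2$. Here $S_P(1/4)=3$ and $S_P'(x)=(3-2x)(1-2x)^{-3}$, so $S_P'(1/4)=20$; thus $\sum_{d=1}^{\infty}h_P(d)d^{-2}=\frac{2}{S_P(1/4)}-1=-\tfrac13$ and $\sum_{d=1}^{\infty}h_P(d)(\log d)d^{-2}=-\tfrac{\log 2}{2}\cdot\frac{S_P'(1/4)}{S_P(1/4)^2}=-\tfrac{10}{9}\log2$. Collecting the terms,
\[
\sum_{n\le x}(-1)^{n-1}P(n)=\frac{3}{\pi^2}x^2\Bigl(-\tfrac13(\log x+c)+\tfrac{10}{9}\log2\Bigr)+O(x^{1+\theta+\varepsilon})=-\frac{1}{\pi^2}x^2\Bigl(\log x+c-\tfrac{10}{3}\log2\Bigr)+O(x^{1+\theta+\varepsilon}),
\]
which, on substituting $c=2\gamma-\tfrac12-\zeta'(2)/\zeta(2)$, is the asserted formula. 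There is no genuine obstacle: the only points needing care are the recognition that Proposition~\ref{Prop_f} must be adapted rather than quoted because of the $\log x$ factor in the main term of $\sum_{n\le x}P(n)$, the bookkeeping in the split $\log(x/d)=\log x-\log d$, and the elementary but error-prone evaluation of $S_P$, $S_P'(1/4)$, and the final combination of constants.
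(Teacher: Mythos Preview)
Your proposal is correct and follows essentially the same route as the paper, which merely says the proof is ``similar to the proofs of above'' and records $h_P(2)=-6$, $h_P(2^{\nu})=2$ for $\nu\ge2$; these values agree with your $b_\nu$ via $h_P(2^\nu)=2b_\nu$. Your explicit split $\log(x/d)=\log x-\log d$ and evaluation of the constants through \eqref{const} and \eqref{const_diff} at $s=2$ are exactly the details the paper leaves implicit.
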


\begin{proof} Similar to the proofs of above. Here $h_{P}(2)=-6$, $h_P(2^{\nu})=2$ ($\nu \ge 2$).
\end{proof}

The next formula was proved by Chen and Zhai \cite[Thm.\
4]{CheZha2011}, sharpening a result of the author \cite[Thm.\
6]{Tot2010GCD}:
\begin{equation*}
\sum_{n\le x} \frac1{P(n)} =  \sum_{j=0}^N \frac{K_j}{(\log x)^{j-1/2}}  + O\left( \frac1{(\log x)^{N+1/2}} \right),
\end{equation*}
valid for every real $x\ge 2$ and every fixed integer $N\ge 1$ where $K_j$ ($1\le j \le N$) are computable constants,
\begin{equation*}
K_0 = \frac{2}{\sqrt{\pi}} \prod_{p\in \P} \left(1-\frac1{p}\right)^{1/2} \sum_{\nu=0}^{\infty} \frac1{P(p^\nu)}.
\end{equation*}

We have

\begin{theorem}
\begin{equation*}
\sum_{n\le x} (-1)^{n-1} \frac1{P(n)} = \sum_{t=0}^N \frac{D_t}{(\log x)^{t-1/2}}
+ O\left( \frac1{(\log x)^{N+1/2}} \right),
\end{equation*}
valid for every real $x\ge 2$ and every fixed integer $N\ge 1$ where $D_t$ ($0\le t \le N$) are computable constants. In particular,
\begin{equation*}
D_0 = K_0 \left(\frac1{2(\log 2-1)}-1 \right).
\end{equation*}
\end{theorem}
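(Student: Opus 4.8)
The plan is to transcribe the proof of Theorem~\ref{th_1_per_tau}, feeding the Chen--Zhai asymptotic formula for $\sum_{n\le x}1/P(n)$ into the convolution identity \eqref{sum} with $f=1/P$. First I would record the Bell series of $1/P$ at $p=2$. From $P(p^\nu)=p^{\nu-1}(p(\nu+1)-\nu)$ one gets $P(2^\nu)=2^{\nu-1}(\nu+2)$ for all $\nu\ge0$ (the case $\nu=0$ reading $2^{-1}\cdot 2=1=P(1)$), hence
\begin{equation*}
S_{1/P}(x)=\sum_{\nu=0}^{\infty}\frac{x^\nu}{P(2^\nu)}=1+2\sum_{\nu=1}^{\infty}\frac{(x/2)^\nu}{\nu+2}=-\frac4x-\frac{8\log(1-x/2)}{x^2}\qquad(|x|<2).
\end{equation*}
In particular $S_{1/P}$ has radius of convergence $2$, and $S_{1/P}(1)=\sum_{\nu\ge0}1/P(2^\nu)=8\log 2-4$, while $S'_{1/P}(1)$ is likewise an explicit constant (it enters $D_1$ via \eqref{const_diff} at $s=0$).

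Next I would bound the coefficients $b_\nu$ of $\overline{S}_{1/P}(x)=1/S_{1/P}(x)$. The sequence $\bigl(1/P(2^\nu)\bigr)_{\nu\ge0}$ is log-convex: the requirement $\bigl(1/P(2^\nu)\bigr)^2\le 1/\bigl(P(2^{\nu-1})P(2^{\nu+1})\bigr)$ collapses, after cancelling the powers of $2$, to $(\nu+1)(\nu+3)\le(\nu+2)^2$, i.e.\ to $3\le4$. Hence Lemma~\ref{lemma_Kaluza} applies and gives $-1/P(2^\nu)\le b_\nu\le 0$ for $\nu\ge1$; consequently $b_\nu\ll 2^{-\nu}$, the multiplicative function $h_{1/P}$ of \eqref{convo} satisfies $h_{1/P}(2^\nu)=2b_\nu\ll 2^{-\nu}$, and $h_{1/P}(p^\nu)=0$ for odd $p$. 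This geometric decay --- here available for free, unlike the $1/\tau$ case where only $h_{1/\tau}(2^\nu)\ll 1/\nu$ held --- makes every auxiliary series in the argument absolutely convergent, with tails of size $O\bigl(x^{-1}(\log x)^{\ell}\bigr)$.

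Then I would substitute the Chen--Zhai formula into \eqref{sum}:
\begin{equation*}
\sum_{n\le x}(-1)^{n-1}\frac1{P(n)}=\sum_{2^\nu\le x}h_{1/P}(2^\nu)\sum_{j\le x/2^\nu}\frac1{P(j)}.
\end{equation*}
Split the outer sum at $2^\nu\le x/2$ (where $x/2^\nu\ge 2$, so the Chen--Zhai formula is legitimate) and $x/2<2^\nu\le x$ (at most one $\nu$, contributing $O(1/x)$). For the main range write $\log(x/2^\nu)=(\log x)\bigl(1-\tfrac{\nu\log 2}{\log x}\bigr)$ and expand each $\bigl(\log(x/2^\nu)\bigr)^{-(t-1/2)}$ ($0\le t\le N$) in the binomial series in $\tfrac{\nu\log 2}{\log x}$. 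Interchanging the (now absolutely convergent) summations, discarding the tails $\sum_{2^\nu>x/2}|h_{1/P}(2^\nu)|(\log 2^\nu)^{\ell}$ and the propagated Chen--Zhai remainder --- both $O\bigl((\log x)^{-N-1/2}\bigr)$ --- and collecting powers of $\log x$, one obtains
\begin{equation*}
\sum_{n\le x}(-1)^{n-1}\frac1{P(n)}=\sum_{t=0}^{N}\frac{D_t}{(\log x)^{t-1/2}}+O\!\left(\frac1{(\log x)^{N+1/2}}\right),
\end{equation*}
with $D_t=\sum_{j=0}^{t}(-1)^{t-j}\binom{-j+1/2}{t-j}K_j\,\widetilde{L}_{t-j}$, where $\widetilde{L}_\ell=\sum_{d=2^\nu}h_{1/P}(2^\nu)(\log 2^\nu)^{\ell}$ (these converge by the geometric bound above). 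In particular $D_0=K_0\widetilde{L}_0=K_0\bigl(\tfrac{2}{S_{1/P}(1)}-1\bigr)$ by \eqref{const} applied with $s=0$, with $S_{1/P}(1)=8\log 2-4$ as computed above.

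Conceptually this is a line-by-line copy of the proof of Theorem~\ref{th_1_per_tau}, the only new ingredients being the closed form of $S_{1/P}$ and the one-line log-convexity check. I expect the only delicate point to be the bookkeeping of the double sum over $t$ and $\ell$, together with the verification that the boundary term, the tails of the $\widetilde{L}_\ell$-series, and the propagated Chen--Zhai error all genuinely collapse into $O\bigl((\log x)^{-N-1/2}\bigr)$; the geometric bound $h_{1/P}(2^\nu)\ll 2^{-\nu}$ furnished by Kaluza's theorem is exactly what makes this routine.
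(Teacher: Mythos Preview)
Your approach is exactly the paper's: invoke the Chen--Zhai formula, verify log-convexity of $\bigl(1/P(2^\nu)\bigr)_{\nu\ge0}$, apply Kaluza's lemma, and rerun the argument of Theorem~\ref{th_1_per_tau}; the paper's proof is in fact just a three-line pointer to precisely these ingredients (recording $P(2^\nu)=(\nu+2)2^{\nu-1}$ and the bound $h_{1/P}(2^\nu)\ll 1/(\nu 2^\nu)$ from Lemma~\ref{lemma_Kaluza}). Your evaluation $S_{1/P}(1)=8\log 2-4$ is correct and gives $D_0=K_0\bigl(\tfrac{1}{2(2\log 2-1)}-1\bigr)$, so the displayed constant in the theorem statement appears to carry a typo (a missing factor of $2$ in front of $\log 2$).
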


\begin{proof} Similar to the proof of Theorem \ref{th_1_per_tau}. Here $P(2^\nu)= (\nu+2) 2^{\nu -1}$ ($\nu \ge 1$).
The crucial fact is that the sequence $\left(\frac1{(\nu+2)2^{\nu
-1}}\right)_{\nu \ge 0}$ is log-convex, and therefore Lemma
\ref{lemma_Kaluza} can be used to deduce that $h_{1/P}(2^\nu)\ll
\frac1{\nu 2^\nu}$.
\end{proof}

\subsection{Squarefree kernel} \label{Sect_Squarefree kernel}

Now we move to the function $\kappa(n)= \prod_{p\mid n} p$ ($n\ge 1$), the squarefree
kernel of $n$ (radical of $n$). It is known that
\begin{equation*}
\sum_{n=1}^{\infty} \frac{\kappa(n)}{n^s}  = \zeta(s)\prod_{p\in \P}
\left(1+\frac{p-1}{p^s} \right) \quad (\Re s>2)
\end{equation*}
and for $x\ge 3$,
\begin{equation*}
\sum_{n\le x} \kappa(n) = \frac{C}{2} x^2 + O\left( R_{\kappa}(x) \right),
\end{equation*}
where $C$ is the constant defined by \eqref{def_C}, $R_{\kappa}(x)= x^{3/2}\delta(x)$ unconditionally
and $R_{\kappa}(x)= x^{7/5}\omega(x)$ assuming the Riemann hypothesis (RH), with
\begin{equation} \label{delta}
\delta(x)= \exp\left(-c_1(\log x)^{3/5}(\log \log x)^{-1/5}\right),
\end{equation}
\begin{equation} \label{omega}
\omega(x)= \exp\left(c_2(\log x)(\log \log x)^{-1}\right),
\end{equation}
$c_1,c_2$ being positive constants.  These estimates are due (for a
more general function) to Suryanarayana and Subrahmanyam \cite[Cor.\
4.3.5 and 4.4.5]{SurSub1977}.

We have
\begin{proposition}
\begin{equation*}
\sum_{n=1}^{\infty} (-1)^{n-1} \frac{\kappa(n)}{n^s}  = \frac{2^s-3}{2^s+1} \zeta(s) \prod_{p\in \P}
\left(1+\frac{p-1}{p^s} \right) \quad (\Re s>2).
\end{equation*}
\end{proposition}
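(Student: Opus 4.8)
The plan is to apply Proposition \ref{prop_1} directly to the multiplicative function $f=\kappa$. Everything needed is the local factor at the prime $2$, i.e.\ the value of the Bell series $\sum_{\nu\ge 0}\kappa(2^\nu)/2^{\nu s}$, together with the Dirichlet series $D(\kappa,s)$ recalled immediately before the statement.

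First I would record that $\kappa(1)=1$ and $\kappa(2^\nu)=2$ for every $\nu\ge 1$, so that for $\Re s>0$
\[
\sum_{\nu=0}^{\infty}\frac{\kappa(2^\nu)}{2^{\nu s}}
= 1+2\sum_{\nu=1}^{\infty}\frac1{2^{\nu s}}
= 1+\frac{2}{2^s-1}
= \frac{2^s+1}{2^s-1}.
\]
Substituting this into \eqref{form_Dir_altern} yields the scalar factor
\[
2\left(\frac{2^s+1}{2^s-1}\right)^{-1}-1
= \frac{2(2^s-1)-(2^s+1)}{2^s+1}
= \frac{2^s-3}{2^s+1}.
\]
Combining this with the known formula $D(\kappa,s)=\zeta(s)\prod_{p\in\P}\bigl(1+\tfrac{p-1}{p^s}\bigr)$, valid for $\Re s>2$, gives exactly the asserted identity. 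I would also note that this is an equality of genuinely convergent series, not merely a formal identity: on $\Re s>2$ the Euler product for $D(\kappa,s)$ converges absolutely, and the Bell series at $p=2$ converges there as well (indeed for $\Re s>0$), so Proposition \ref{prop_1} applies in the regime of convergence, and the extra rational factor $\tfrac{2^s-3}{2^s+1}$ is analytic on $\Re s>2$ (where $|2^s|>4>3$).

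There is no real obstacle here; the mathematical content is entirely carried by Proposition \ref{prop_1} and the previously stated expression for $D(\kappa,s)$, and what remains is the short algebraic simplification of the Euler factor at $2$ shown above. The only point deserving a word of care is identifying the region of validity, which is handled by the convergence remarks just made.
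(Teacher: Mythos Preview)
Your proof is correct and follows exactly the approach the paper uses for all the analogous propositions: apply Proposition~\ref{prop_1} with the known expression for $D(\kappa,s)$, compute the Bell series at $p=2$ from $\kappa(2^\nu)=2$ for $\nu\ge 1$, and simplify. The paper does not spell out a proof for this particular proposition, but your computation is precisely the intended one.
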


\begin{theorem} \label{Th_kappa}
\begin{equation*}
\sum_{n\le x} (-1)^{n-1} \kappa(n) = \frac{C}{10} x^2 + O\left( R_{\kappa}(x) \right),
\end{equation*}
where $C$ is given by \eqref{def_C} and $R_{\kappa}(x)$ is defined above.
\end{theorem}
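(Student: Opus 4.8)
The plan is to apply Proposition \ref{Prop_f} with $f=\kappa$, exactly in the style of the earlier theorems of this section. First I would record the required input. Suryanarayana and Subrahmanyam's estimate reads $\sum_{n\le x}\kappa(n)=\frac{C}{2}x^2+O(R_\kappa(x))$; I rewrite it as $\sum_{n\le x}\kappa(n)=C_\kappa x^2+O(xR(x))$ with $C_\kappa=C/2$ and $R(x)=R_\kappa(x)/x$. In the unconditional case $R(x)=x^{1/2}\delta(x)$ and under RH $R(x)=x^{2/5}\omega(x)$; in both cases $R(x)$ is eventually nondecreasing, satisfies $1\ll R(x)$, and $R(x)=o(x)$, so hypothesis (i) of Proposition \ref{Prop_f} holds.

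Next I would compute the Bell series of $\kappa$ at the prime $2$. Since $\kappa(2^\nu)=2$ for every $\nu\ge 1$ and $\kappa(1)=1$,
\begin{equation*}
S_\kappa(x)=1+2\sum_{\nu=1}^{\infty}x^\nu=\frac{1+x}{1-x}\qquad(|x|<1),
\end{equation*}
so $r_\kappa=1$ and $S_\kappa(1/4)$ converges, which is (ii). The reciprocal power series is
\begin{equation*}
\overline{S}_\kappa(x)=\frac{1-x}{1+x}=1+2\sum_{\nu=1}^{\infty}(-1)^\nu x^\nu\qquad(|x|<1),
\end{equation*}
whose coefficients $b_0=1$, $b_\nu=2(-1)^\nu$ ($\nu\ge 1$) form a bounded sequence, giving (iii) (equivalently, $h_\kappa(2^\nu)=4(-1)^\nu$ for $\nu\ge 1$).

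Then Proposition \ref{Prop_f} yields
\begin{equation*}
\sum_{n\le x}(-1)^{n-1}\kappa(n)=C_\kappa\left(\frac{2}{S_\kappa(1/4)}-1\right)x^2+O(xR(x)),
\end{equation*}
and $xR(x)=R_\kappa(x)$ by construction, so the error term is exactly $O(R_\kappa(x))$. Finally I compute the constant: $S_\kappa(1/4)=\frac{1+1/4}{1-1/4}=\frac{5}{3}$, hence $\frac{2}{S_\kappa(1/4)}-1=\frac{1}{5}$, and the main coefficient is $\frac{C}{2}\cdot\frac{1}{5}=\frac{C}{10}$. As a consistency check, this agrees with evaluating the bracket $\frac{2^s-3}{2^s+1}$ of the preceding Proposition at $s=2$, which also gives $\frac{1}{5}$. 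Since everything reduces to Proposition \ref{Prop_f} together with elementary power-series manipulation, there is no substantial obstacle; the only point deserving a line of care is verifying that $R_\kappa(x)/x$ is nondecreasing and bounded below, so that the hypotheses of Proposition \ref{Prop_f} are genuinely satisfied in both the unconditional and the conditional setting.
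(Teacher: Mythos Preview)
Your proposal is correct and follows the same convolution strategy as the paper: compute $S_\kappa(x)=\frac{1+x}{1-x}$, observe that $h_\kappa(2^\nu)=4(-1)^\nu$ is bounded, and read off the constant $\frac{C}{2}\cdot\frac{1}{5}=\frac{C}{10}$.

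There is one point of divergence worth noting. The paper explicitly says that in the unconditional case Proposition~\ref{Prop_f} \emph{cannot} be applied, because ``the function $\delta(x)$ is not increasing'', and instead reruns the convolution argument by hand using that $x^{\varepsilon}\delta(x)$ is increasing for every $\varepsilon>0$. You, by contrast, apply Proposition~\ref{Prop_f} directly after asserting that $R(x)=x^{1/2}\delta(x)$ is eventually nondecreasing. Your route is in fact valid: the hypothesis of Proposition~\ref{Prop_f} concerns $R_f(x)=R_\kappa(x)/x=x^{1/2}\delta(x)$, not $\delta(x)$ alone, and since $\log\bigl(x^{1/2}\delta(x)\bigr)=\tfrac12\log x - c_1(\log x)^{3/5}(\log\log x)^{-1/5}$ has the linear term dominating, the product is indeed eventually increasing. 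So the paper's workaround is unnecessary and your argument is a little cleaner; but you should actually justify that monotonicity in one line rather than merely asserting it, since the paper itself flags this as the delicate point. Under RH both you and the paper simply invoke Proposition~\ref{Prop_f}, since $\omega(x)$ is increasing.
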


\begin{proof} Here, to deduce the unconditional result, Proposition \ref{Prop_f} cannot be applied,
since the function  $\delta(x)$ is not increasing. However,
$x^{\varepsilon}\delta(x)$ is increasing for any $\varepsilon>0$ and
we obtain by \eqref{sum},
\begin{equation*}
\sum_{n\le x} (-1)^{n-1} \kappa(n) = \sum_{d\le x} h_{\kappa}(d)
\left( \frac{C}{2} \left(\frac{x}{d}\right)^2 +
O\left(\left(\frac{x}{d}\right)^{3/2} \delta(x/d) \right) \right)
\end{equation*}
\begin{equation*}
=\frac{C x^2}{2} \sum_{d\le x} \frac{h_{\kappa}(d)}{d^2} +
O\left(x^{\varepsilon}\delta(x) x^{3/2-\varepsilon} \sum_{d\le x}
\frac{|h_{\kappa}(d)|}{d^{3/2-\varepsilon}}\right).
\end{equation*}

Note that $h_{\kappa}(2^\nu)=4(-1)^\nu$ ($\nu \ge 1$). Hence the function
$h_{\kappa}$ is bounded and the result is obtained by
the usual arguments.

Assuming RH, Proposition \ref{Prop_f} can directly be applied, since $\omega(x)$ is increasing.
\end{proof}

It is known that
\begin{equation*}
K(x):= \sum_{n\le x} \frac1{\kappa(n)} = \exp
\left(\left(1+o(1)\right)\left(\frac{8\log x}{\log \log x}
\right)^{1/2}\right) \quad (x\to \infty),
\end{equation*}
due to de~Bruijn \cite{Bru1962}, confirming a conjecture of
Erd\H{o}s. In fact,
\begin{equation*}
K(x) \sim \frac1{2} e^{\gamma}F(\log x)(\log \log x) \quad (x\to \infty),
\end{equation*}
where $\gamma$ is Euler's constant and
\begin{equation*}
F(t):= \frac{6}{\pi^2} \sum_{m=1}^{\infty} \frac{\min(1,e^t/m)}{\prod_{p\mid m} (p+1)} \quad (t\ge 0),
\end{equation*}
which follows from a more precise asymptotic formula with error term, recently
established by Robert and Tenenbaum \cite[Thm.\ 4.3]{RobTen2013}. We point out that according to \cite[Eq.\ (2.12)]{RobTen2013},
there exists a sequence of polynomials $(Q_j)_{j\ge 1}$ with $\deg Q_j\le j$ ($j\ge 1$) such that for any $N\ge 1$,
\begin{equation*}
F(t)= \exp \left( \left(\frac{8t}{\log t} \right)^{1/2} \left( 1 + \sum_{j=1}^N \frac{Q_j(\log \log t)}{(\log t)^j} +
O\left(\left(\frac{\log \log t}{\log t} \right)^{N+1} \right) \right) \right)  \quad (t\ge 3).
\end{equation*}

Note that
\begin{equation*}
S_{1/\kappa}(x)= \sum_{\nu=0}^{\infty} \frac1{\kappa(2^\nu)}x^\nu =
\frac{2-x}{2(1-x)} \quad (|x|<1),
\end{equation*}
\begin{equation*}
\overline{S}_{1/\kappa}(x)= \frac{2(1-x)}{2-x} = 1 -
\sum_{\nu=1}^{\infty} \frac{x^\nu}{2^\nu} \quad (|x|<2),
\end{equation*}
therefore $h_{1/\kappa}(2^\nu)=-\frac1{2^{\nu-1}}$ ($\nu \ge 1$) and
$\sum_{n=1}^{\infty} h_{1/\kappa}(n)=-1$. It follows that
\begin{equation} \label{KK}
K_{\altern}(x):= \sum_{n\le x} (-1)^{n-1}
\frac1{\kappa(n)} =  K(x) - 2 \sum_{2\le 2^{\nu}\le x} \frac1{2^\nu}
K\left(\frac{x}{2^\nu}\right).
\end{equation}

Identity \eqref{KK} and the deep analytic results of Robert and
Tenenbaum \cite{RobTen2013} lead to the following:

\begin{theorem} {\rm (Tenenbaum \cite{Ten2016})} One has
\begin{equation} \label{KK_asymp}
K_{\altern}(x) \sim - K(x) \quad (x\to \infty)
\end{equation}
and a genuine asymptotic formula with effective remainder term may be derived for $K_{\altern}(x)$.
\end{theorem}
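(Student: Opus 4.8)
The plan is to work directly from identity \eqref{KK}. Dividing it by $K(x)$ gives
\begin{equation*}
\frac{K_{\altern}(x)}{K(x)} = 1 - 2\sum_{2\le 2^{\nu}\le x}\frac1{2^{\nu}}\cdot\frac{K(x/2^{\nu})}{K(x)},
\end{equation*}
so, since $\sum_{\nu\ge1}2^{-\nu}=1$, the relation \eqref{KK_asymp} is equivalent to
\begin{equation*}
\sum_{2\le 2^{\nu}\le x}\frac1{2^{\nu}}\cdot\frac{K(x/2^{\nu})}{K(x)}\longrightarrow 1\qquad(x\to\infty),
\end{equation*}
i.e.\ each ratio $K(x/2^{\nu})/K(x)$ must contribute its full weight $2^{-\nu}$.

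Two facts drive this. First, $K$ is nondecreasing, being the summatory function of the positive function $1/\kappa$, so $0\le K(x/2^{\nu})/K(x)\le1$ for every $\nu\ge1$; this supplies the summable majorant $2^{-\nu}$, uniformly in $x$. Second, $K$ is slowly varying: $K(x/2^{\nu})/K(x)\to1$ for each fixed $\nu$. Writing $K(x)\sim\tfrac12 e^{\gamma}F(\log x)(\log\log x)$ (Robert and Tenenbaum \cite[Thm.\ 4.3]{RobTen2013}) and noting $\log\log(x/2^{\nu})/\log\log x\to1$, this amounts to $F(t-\nu\log2)/F(t)\to1$; the relevant input here, a consequence of the (more precise) asymptotics of \cite{Bru1962, RobTen2013}, is that $F(t)-F(t-c)=o(F(t))$ for each fixed $c>0$, even though $F(t)$ itself grows faster than every fixed power of $t$. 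Granting the two facts, split the sum at a fixed level $N$: the tail $\sum_{\nu>N}2^{-\nu}K(x/2^{\nu})/K(x)$ is $\le2^{-N}$ for all $x$, while the head $\sum_{1\le\nu\le N}2^{-\nu}K(x/2^{\nu})/K(x)\to\sum_{1\le\nu\le N}2^{-\nu}=1-2^{-N}$ as $x\to\infty$. Hence both $\limsup$ and $\liminf$ of the full sum lie within $2^{-N}$ of $1$ for every $N$; letting $N\to\infty$ gives the displayed limit, and thus \eqref{KK_asymp}. (Equivalently, this is one application of dominated convergence with respect to the weights $2^{-\nu}$.)

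For the genuine asymptotic formula with effective remainder the idea is the same, made quantitative: substitute into \eqref{KK} the Robert--Tenenbaum asymptotic for $K(y)$ together with its explicit error term and the expansion \cite[Eq.\ (2.12)]{RobTen2013} of $F$, and then evaluate $\sum_{2\le 2^{\nu}\le x}2^{-\nu}\bigl(F(\log x)(\log\log x)-F(\log(x/2^{\nu}))(\log\log(x/2^{\nu}))\bigr)$ by expanding each difference using the slow variation of $F$ and of $\log\log$. The main difficulty, as is typical for renewal-type sums of this kind, is uniformity in $\nu$ over the whole range $1\le\nu\le\log x/\log2$: the term-by-term expansion of $F(\log(x/2^{\nu}))$ degenerates once $\nu$ is of order $\log x$ (there $x/2^{\nu}$ is bounded), so on that range one must fall back on the crude inequality $K(x/2^{\nu})\le K(x)$ against the geometric weight $2^{-\nu}$, combined with the effective remainder of \cite{RobTen2013}, and check that the resulting large-$\nu$ contribution stays below the target error. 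Carrying all this out is the content of Tenenbaum \cite{Ten2016}.
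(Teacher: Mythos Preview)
The paper does not actually supply a proof of this theorem: it states the result, attributes it to a private communication of Tenenbaum \cite{Ten2016}, and only remarks that identity \eqref{KK} together with the Robert--Tenenbaum asymptotics \cite{RobTen2013} ``lead to'' it. So there is no paper-proof to compare against; your proposal is in fact \emph{more} than what the paper offers.

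Your argument for \eqref{KK_asymp} is correct. The two ingredients you isolate---monotonicity of $K$ giving the uniform majorant $2^{-\nu}$, and slow variation $K(x/2^{\nu})/K(x)\to 1$ for each fixed $\nu$---combine via the split at level $N$ (equivalently dominated convergence) exactly as you say. The only point worth flagging is that slow variation of $K$ does \emph{not} follow from de~Bruijn's result $\log K(x)=(1+o(1))(8\log x/\log\log x)^{1/2}$ alone, since the unspecified $o(1)$ at $x$ and at $x/2^{\nu}$ need not match; you are right to invoke the sharper Robert--Tenenbaum asymptotic $K(x)\sim\tfrac12 e^{\gamma}F(\log x)\log\log x$ and the explicit expansion of $F$ from \cite[Eq.\ (2.12)]{RobTen2013}, from which $\log F(t)-\log F(t-c)\to 0$ (hence $F(t-c)/F(t)\to 1$) follows by a derivative estimate on the leading term $(8t/\log t)^{1/2}$ and its corrections. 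You identify this correctly as the nontrivial input.

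For the second assertion (the effective asymptotic), the paper likewise gives nothing beyond the attribution to \cite{Ten2016}; your sketch of how one would proceed---substitute the quantitative Robert--Tenenbaum formula into \eqref{KK}, expand the head in $\nu$, and control the tail by monotonicity against the geometric weight---is the natural plan and matches what the paper implicitly points to.
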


\subsection{Squarefree numbers}

Now consider the squarefree numbers for which the characteristic function
is $\mu^2$, where $\mu$ is the M\"obius function. It is well-known that
\begin{equation*}
\sum_{n=1}^{\infty} \frac{\mu^2(n)}{n^s}  =
\frac{\zeta(s)}{\zeta(2s)} \quad (\Re s>1)
\end{equation*}
and
\begin{equation*}
\sum_{n\le x} \mu^2(n) = \frac{6}{\pi^2} x + O\left( R_{\mu^2}(x) \right),
\end{equation*}
where $R_{\mu^2}(x)= x^{1/2}\delta(x)$, with $\delta(x)$ defined by
\eqref{delta}, unconditionally, due to Walfisz \cite[Satz 1, p.\
192]{Wal1963}, and $R_{\mu^2}(x)= x^{11/35+\varepsilon}$
($\varepsilon>0$) assuming RH, due very recently to Liu
\cite{Liu2016}.

\begin{proposition}
\begin{equation*}
\sum_{n=1}^{\infty} (-1)^{n-1} \frac{\mu^2(n)}{n^s}  =
\frac{2^s-1}{2^s+1}\cdot \frac{\zeta(s)}{\zeta(2s)} \quad (\Re s>1).
\end{equation*}
\end{proposition}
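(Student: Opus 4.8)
The plan is to mimic the proofs of the preceding alternating-Dirichlet-series propositions (for $\varphi$, $\psi$, $\sigma$, $\tau$, $\kappa$) and simply invoke Proposition~\ref{prop_1}, since $\mu^2$ is multiplicative. The only computation needed is the local Euler factor at the prime $2$, namely $\sum_{\nu=0}^{\infty} \mu^2(2^\nu) 2^{-\nu s}$. Because $\mu^2(1)=\mu^2(2)=1$ while $\mu^2(2^\nu)=0$ for all $\nu\ge 2$, this sum collapses to $1+2^{-s}$.

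First I would recall the standard identity $\sum_{n=1}^\infty \mu^2(n) n^{-s} = \zeta(s)/\zeta(2s)$, valid for $\Re s>1$; this plays the role of $D(f,s)$ in Proposition~\ref{prop_1}. Then I would substitute into formula~\eqref{form_Dir_altern} with $f=\mu^2$, obtaining
\begin{equation*}
\sum_{n=1}^{\infty} (-1)^{n-1} \frac{\mu^2(n)}{n^s} = \frac{\zeta(s)}{\zeta(2s)}\left(\frac{2}{1+2^{-s}}-1\right).
\end{equation*}
Finally I would simplify the parenthesized factor: $\frac{2}{1+2^{-s}}-1 = \frac{2-(1+2^{-s})}{1+2^{-s}} = \frac{1-2^{-s}}{1+2^{-s}} = \frac{2^s-1}{2^s+1}$, which yields the claimed expression. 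Convergence of the left-hand side for $\Re s>1$ follows since $\zeta(s)/\zeta(2s)$ converges absolutely there and, as noted after Proposition~\ref{prop_mean}, the function $n\mapsto(-1)^{n-1}\mu^2(n)$ inherits the relevant convergence properties of $\mu^2$; alternatively one argues as in~\eqref{eta} that the alternating series extends the region of convergence.

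There is essentially no obstacle here: the computation of the $p=2$ Bell factor is trivial because $\mu^2$ is supported on squarefree numbers, so the only mildly delicate point is stating the region of validity $\Re s>1$ correctly and noting that the identity holds formally as well, exactly as in the other propositions of this section.
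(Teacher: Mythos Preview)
Your proposal is correct and follows exactly the approach the paper uses (implicitly, since the paper states this proposition without an explicit proof): apply Proposition~\ref{prop_1} to the multiplicative function $\mu^2$, using $D(\mu^2,s)=\zeta(s)/\zeta(2s)$ and the trivially computed Bell factor $\sum_{\nu\ge 0}\mu^2(2^\nu)2^{-\nu s}=1+2^{-s}$. The simplification $\frac{2}{1+2^{-s}}-1=\frac{2^s-1}{2^s+1}$ is exactly what is needed.
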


\begin{theorem}
\begin{equation*}
\sum_{n\le x} (-1)^{n-1} \mu^2(n) = \frac{2}{\pi^2} x + O\left( R_{\mu^2}(x) \right).
\end{equation*}
\end{theorem}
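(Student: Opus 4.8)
The plan is to imitate the proofs of the earlier theorems via the convolution identity \eqref{sum}, but one first notes that Proposition \ref{Prop_f} cannot be invoked here: the summatory function of $\mu^2$ has main term of order $x$, not $x^2$. So I would go back to \eqref{convo}--\eqref{sum} directly. The Bell series of $\mu^2$ at $p=2$ is trivial to compute: since $\mu^2(1)=\mu^2(2)=1$ and $\mu^2(2^\nu)=0$ for $\nu\ge 2$, one has $S_{\mu^2}(x)=1+x$, with reciprocal power series $\overline{S}_{\mu^2}(x)=(1+x)^{-1}=\sum_{\nu\ge 0}(-1)^\nu x^\nu$, so the coefficients $b_\nu=(-1)^\nu$ form a bounded sequence. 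Hence the multiplicative function $h_{\mu^2}$ of \eqref{convo} satisfies $h_{\mu^2}(1)=1$, $h_{\mu^2}(2^\nu)=2(-1)^\nu$ for $\nu\ge 1$, and $h_{\mu^2}(p^\nu)=0$ for every odd prime $p$ and $\nu\ge 1$; in particular $h_{\mu^2}$ is bounded and supported on powers of $2$.

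Next I would insert the known estimate $\sum_{n\le y}\mu^2(n)=\tfrac{6}{\pi^2}y+O(R_{\mu^2}(y))$ into \eqref{sum}:
\begin{align*}
\sum_{n\le x}(-1)^{n-1}\mu^2(n)
&=\sum_{2^\nu\le x}h_{\mu^2}(2^\nu)\sum_{j\le x/2^\nu}\mu^2(j)\\
&=\frac{6}{\pi^2}\,x\sum_{2^\nu\le x}\frac{h_{\mu^2}(2^\nu)}{2^\nu}
+O\!\left(\sum_{2^\nu\le x}\bigl|h_{\mu^2}(2^\nu)\bigr|\,R_{\mu^2}(x/2^\nu)\right).
\end{align*}
The main-term sum is completed to all $\nu\ge 0$ at a cost of $\tfrac{6}{\pi^2}x\sum_{2^\nu>x}|h_{\mu^2}(2^\nu)|2^{-\nu}\ll x\cdot x^{-1}=O(1)$, and by \eqref{const} at $s=1$ (equivalently, by evaluating $\tfrac{2^s-1}{2^s+1}$ at $s=1$ as in the Remark on $\varphi$),
\begin{equation*}
\sum_{\nu=0}^{\infty}\frac{h_{\mu^2}(2^\nu)}{2^\nu}=\frac{2}{S_{\mu^2}(1/2)}-1=\frac{2}{3/2}-1=\frac13,
\end{equation*}
which yields the announced main term $\tfrac{6}{\pi^2}\cdot\tfrac13\cdot x=\tfrac{2}{\pi^2}x$.

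It then remains to control the error sum, and this is the only point requiring a little care, for the same reason as in the proof of Theorem \ref{Th_kappa}: under RH, $R_{\mu^2}(x)=x^{11/35+\varepsilon}$ is nondecreasing, so $\sum_{2^\nu\le x}R_{\mu^2}(x/2^\nu)\ll R_{\mu^2}(x)\sum_{\nu\ge 0}2^{-\nu(11/35+\varepsilon)}\ll R_{\mu^2}(x)$ at once; but unconditionally $R_{\mu^2}(x)=x^{1/2}\delta(x)$ with $\delta$ given by \eqref{delta} is not monotone. As in Theorem \ref{Th_kappa} I would use that $x^\varepsilon\delta(x)$ is nondecreasing for every $\varepsilon>0$, so that $R_{\mu^2}(x/2^\nu)=(x/2^\nu)^{1/2}\delta(x/2^\nu)\le x^{1/2}\delta(x)\,2^{-\nu(1/2-\varepsilon)}$, whence the sum over $\nu$ converges and the error is again $O(R_{\mu^2}(x))$. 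Since $R_{\mu^2}(x)\to\infty$, the $O(1)$ term above is absorbed, completing the proof. The asymptotic formula for $\sum_{n\le x}(-1)^{n-1}\mu^2(n)/n^s$ stated in the preceding proposition serves as an independent check of the constant $\tfrac13$.
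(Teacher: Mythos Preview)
Your proof is correct and follows essentially the same route as the paper: the paper's proof simply says ``Similar to the proof of Theorem \ref{Th_kappa}'' and records that $h_{\mu^2}(2^\nu)=2(-1)^\nu$ for $\nu\ge 1$, so that $h_{\mu^2}$ is bounded. You have written out in full the computation of $S_{\mu^2}$, the constant via \eqref{const}, and the monotonicity trick $x^{\varepsilon}\delta(x)$ nondecreasing to handle the unconditional error term---all exactly the steps the paper leaves implicit by its cross-reference.
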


\begin{proof} Similar to the proof of Theorem \ref{Th_kappa}. Note that here
$h_{\mu^2}(2^\nu)=2(-1)^\nu$ ($\nu \ge 1$). Hence the function
$h_{\kappa}$ is bounded.
\end{proof}

\subsection{Number of abelian groups of a given order}

Let $a(n)$ denote, as usual, the number of abelian groups of order
$n$. This is another classical multiplicative function, investigated
by several authors. It is known that
\begin{equation*}
\sum_{n=1}^{\infty} \frac{a(n)}{n^s} = \prod_{k=1}^{\infty}
\zeta(ks) \quad (\Re s>1),
\end{equation*}
\begin{equation} \label{asympt_a_n}
\sum_{n\le x} a(n) = C_1x+ C_2x^{1/2}+C_3x^{1/3}+ O\left(
x^{1/4+\varepsilon} \right),
\end{equation}
where
\begin{equation*}
C_j=\prod_{\substack{k=1\\ k\ne j}}^{\infty} \zeta(k/j) \quad
(j=1,2,3),
\end{equation*}
this best error term to date due to Robert and Sargos
\cite{RS2006}.

We have
\begin{proposition}
\begin{equation*}
\sum_{n=1}^{\infty} (-1)^{n-1} \frac{a(n)}{n^s} =
\left(2\prod_{k=1}^{\infty}\left(1-\frac1{2^{ks}} \right) -1 \right)
\prod_{k=1}^{\infty} \zeta(ks) \quad (\Re s>1).
\end{equation*}
\end{proposition}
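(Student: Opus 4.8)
The plan is to apply Proposition~\ref{prop_1} with $f=a$. First I would recall the two standard facts about $a$: it is multiplicative, and for every prime $p$ and every $\nu\ge 0$ one has $a(p^\nu)=p(\nu)$, the number of unrestricted partitions of $\nu$. This is because a finite abelian $p$-group of order $p^\nu$ decomposes uniquely (up to isomorphism) as a direct sum of cyclic $p$-groups, whose orders record a partition of $\nu$; in particular $a(p^\nu)$ is independent of $p$, which is precisely why the Euler factor of $a$ is the same at every prime and why $D(a,s)=\prod_{k\ge 1}\zeta(ks)$ for $\Re s>1$.

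Next I would compute the Bell series of $a$ at the prime $2$. By Euler's generating-function identity $\sum_{\nu=0}^{\infty} p(\nu)x^{\nu}=\prod_{k=1}^{\infty}(1-x^k)^{-1}$, valid for $|x|<1$, evaluated at $x=1/2^s$ (which satisfies $|x|<1$ whenever $\Re s>0$), I obtain
\begin{equation*}
\sum_{\nu=0}^{\infty}\frac{a(2^\nu)}{2^{\nu s}}=\prod_{k=1}^{\infty}\left(1-\frac1{2^{ks}}\right)^{-1},
\end{equation*}
and hence
\begin{equation*}
\left(\sum_{\nu=0}^{\infty}\frac{a(2^\nu)}{2^{\nu s}}\right)^{-1}=\prod_{k=1}^{\infty}\left(1-\frac1{2^{ks}}\right).
\end{equation*}
Equivalently, one could read this Euler factor off directly from $D(a,s)=\prod_{p\in\P}\prod_{k\ge1}(1-p^{-ks})^{-1}$ by comparison with the general Euler product \eqref{multipl_Euler}, thereby avoiding the partition identity altogether.

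Finally I would substitute $D(a,s)=\prod_{k=1}^{\infty}\zeta(ks)$ and the expression just obtained into formula \eqref{form_Dir_altern} of Proposition~\ref{prop_1}, which yields
\begin{equation*}
\sum_{n=1}^{\infty}(-1)^{n-1}\frac{a(n)}{n^s}=\left(\prod_{k=1}^{\infty}\zeta(ks)\right)\left(2\prod_{k=1}^{\infty}\left(1-\frac1{2^{ks}}\right)-1\right),
\end{equation*}
exactly as asserted, for $\Re s>1$, where $\prod_{k}\zeta(ks)$ converges absolutely and $\prod_{k}(1-2^{-ks})$ converges. There is no real obstacle in this argument; the only step deserving a word of justification is the identification $a(p^\nu)=p(\nu)$ together with its independence of $p$, and everything else is a routine substitution into Proposition~\ref{prop_1}.
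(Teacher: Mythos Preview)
Your proof is correct and is precisely the argument the paper has in mind: the paper does not spell out a separate proof for this proposition, but the whole point of Section~\ref{Sect_Alter_Dir_series} is that each such identity follows by plugging the known Dirichlet series and the Bell series at $p=2$ into Proposition~\ref{prop_1}, which is exactly what you do. Your extra remark that one can read off the Euler factor directly from the factorization $D(a,s)=\prod_{p}\prod_{k}(1-p^{-ks})^{-1}$ is also in the spirit of the paper's comment after \eqref{222}.
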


\begin{theorem}
\begin{equation*}
\sum_{n\le x} (-1)^{n-1} a(n) = C_1K_1 x+
C_2K_2x^{1/2}+C_3K_3x^{1/3}+ O\left( x^{1/4+\varepsilon} \right),
\end{equation*}
where
\begin{equation*}
K_j= 2\prod_{k=1}^{\infty} \left(1-\frac1{2^{k/j}} \right)-1 \quad
(j=1,2,3).
\end{equation*}
\end{theorem}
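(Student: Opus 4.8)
The plan is to apply the convolution method of Section~\ref{Subsect_Method} directly rather than through Proposition~\ref{Prop_f}, since the latter is not applicable here: the summatory function of $a$ in \eqref{asympt_a_n} has a \emph{linear} leading term together with two further power-law terms, not a single $x^2$ term. So I would start from the identity \eqref{sum}, which here collapses to
\begin{equation*}
\sum_{n\le x} (-1)^{n-1} a(n) = \sum_{2^\nu \le x} h_a(2^\nu) \sum_{j\le x/2^\nu} a(j),
\end{equation*}
because the multiplicative function $h_a$ of \eqref{convo} is supported on powers of $2$.

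The key preliminary step is to identify the Bell series of $a$ at $p=2$. Since the number of abelian groups of order $2^\nu$ equals the number $p(\nu)$ of partitions of $\nu$, Euler's product formula gives
\begin{equation*}
S_a(x) = \sum_{\nu=0}^\infty p(\nu)\, x^\nu = \prod_{k=1}^\infty \frac1{1-x^k}, \qquad
\overline{S}_a(x) = \prod_{k=1}^\infty (1-x^k) \qquad (|x|<1).
\end{equation*}
By Euler's pentagonal number theorem the coefficients $b_\nu$ of $\overline{S}_a(x)$ lie in $\{-1,0,1\}$; in particular $(b_\nu)_{\nu\ge0}$ is bounded, so $h_a(2^\nu)=2b_\nu$ is bounded and $\sum_{\nu\ge0} h_a(2^\nu)\, 2^{-\nu s}$ converges absolutely for every $s$ with $\RE s>0$. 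For such $s$ the point $1/2^s$ lies inside the common disc of convergence $|x|<1$ of $S_a$ and $\overline{S}_a$, so \eqref{const} applies and yields
\begin{equation*}
\sum_{\nu=0}^\infty \frac{h_a(2^\nu)}{2^{\nu s}} = \frac{2}{S_a(1/2^s)} - 1 = 2\prod_{k=1}^\infty\Bigl(1-\frac1{2^{ks}}\Bigr) - 1.
\end{equation*}
Evaluating at $s=1,\tfrac12,\tfrac13$ produces exactly the constants $K_1,K_2,K_3$ of the statement.

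With this in hand I would insert \eqref{asympt_a_n} into the displayed identity, writing $\sum_{j\le x/2^\nu} a(j) = C_1 x/2^\nu + C_2 (x/2^\nu)^{1/2} + C_3 (x/2^\nu)^{1/3} + O((x/2^\nu)^{1/4+\varepsilon})$ and summing over $2^\nu\le x$. The three main terms contribute
\begin{equation*}
C_1 x \sum_{2^\nu\le x}\frac{h_a(2^\nu)}{2^\nu} + C_2 x^{1/2}\sum_{2^\nu\le x}\frac{h_a(2^\nu)}{2^{\nu/2}} + C_3 x^{1/3}\sum_{2^\nu\le x}\frac{h_a(2^\nu)}{2^{\nu/3}};
\end{equation*}
completing each sum to $\nu=\infty$ via the identity above and bounding the tails by $\sum_{2^\nu>x} 2^{-\nu s}\ll x^{-s}$ (which costs only $O(1)$ in each case) turns this into $C_1 K_1 x + C_2 K_2 x^{1/2} + C_3 K_3 x^{1/3} + O(1)$. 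The error term contributes $\ll x^{1/4+\varepsilon}\sum_{2^\nu\le x} 2^{-\nu(1/4+\varepsilon)} \ll x^{1/4+\varepsilon}$, the geometric series being convergent. Collecting everything gives the asserted formula.

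There is no serious obstacle here: the one genuine ingredient is the recognition that $S_a$ is the generating function of the partition function, so that the pentagonal number theorem makes the $b_\nu$ bounded — this is precisely what keeps the error term at the level $x^{1/4+\varepsilon}$ of \eqref{asympt_a_n}, as in the earlier proofs of this section. The remaining work is the routine bookkeeping of three main terms and one error term, plus checking that $\prod_{k\ge1}(1-2^{-k/j})$ is finite and nonzero for $j=1,2,3$, which is immediate since $\sum_k 2^{-k/j}<\infty$ and no factor vanishes.
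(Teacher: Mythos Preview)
Your proposal is correct and follows essentially the same approach as the paper: both apply the convolution identity \eqref{sum}, invoke Euler's pentagonal number theorem to see that the reciprocal Bell series $\overline{S}_a(x)=\prod_{k\ge1}(1-x^k)$ has bounded coefficients (whence $h_a$ is bounded), and then insert the known asymptotic \eqref{asympt_a_n}. The paper's proof is terser (``the proof can be carried out by the usual arguments''), whereas you spell out the tail and error estimates explicitly; the only small inaccuracy is that $h_a(1)=2b_0-1=1$ rather than $2b_0$, but this does not affect boundedness or your use of \eqref{const}.
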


Note that $K_1\doteq -0.422423$, where the digits of
$\prod_{k=1}^{\infty} \left(1-\frac1{2^{k}}\right) \doteq 0.288788$
form the sequence \seqnum{A048651} in the OEIS \cite{OEIS}.

\begin{proof} We use the method described in Section \ref{Subsect_Method}. According to \eqref{sum},
\begin{equation*}
\sum_{n\le x} (-1)^{n-1} a(n) =\sum_{d\le x} h_a(d) \sum_{j\le x/d}
a(j).
\end{equation*}

Remark that by Euler's pentagonal number theorem,
\begin{equation*}
\prod_{k=1}^{\infty} \left(1-\frac1{2^{ks}} \right) =
1+\sum_{j=1}^{\infty} (-1)^j \left(\frac1{2^{(3j-1)js/2}}+
\frac1{2^{(3j+1)js/2}} \right) \quad (|2^s| > 1),
\end{equation*}
which shows that $h_{a}(2^\nu) \in \{-2,0,2 \}$ for every $\nu \ge
1$ and $h_a(p^\nu)=0$ for every prime $p>2$ and every $\nu \ge 1$.
Hence the function $h_a$ is bounded. Now using \eqref{asympt_a_n},
the proof can be carried out by the usual arguments.
\end{proof}

It is known that
\begin{equation*}
\sum_{n\le x} \frac1{a(n)} = D x + O\left(x^{1/2}(\log x)^{-1/2}
\right),
\end{equation*}
where
\begin{equation*}
D= \prod_{p\in \P} \left(1+\sum_{k=2}^{\infty}
\left(\frac1{P(k)}-\frac1{P(k-1)} \right) \frac1{p^k} \right)\doteq
0.752015
\end{equation*}
(sequence \seqnum{A084911} in the OEIS \cite{OEIS}), due to De~Koninck and Ivi\'c
\cite[Thm.\ 1.3]{DeKIvi1980}. Here $P(k)$ denotes the number of
unrestricted partitions of $k$ (not to be confused with the gcd-sum
function from Section \ref{Subsection_gcd}, denoted also by $P$).
See Nowak \cite{Now1991} for a more precise asymptotic formula.

It follows by \eqref{nonzero} that the limit
\begin{equation*}
\lim_{x\to \infty} \frac1{x} \sum_{n\le x} (-1)^{n-1} \frac1{a(n)} =
D \left(2\left(1+\sum_{\nu=1}^{\infty} \frac1{P(\nu)2^{\nu}}
\right)^{-1}-1 \right)
\end{equation*}
exists.

To establish an asymptotic formula for
\begin{equation} \label{asympt_1_per_a}
\sum_{n\le x} (-1)^{n-1} \frac1{a(n)}
\end{equation}
we need to estimate the coefficients of the reciprocal of the power
series $S_{1/a}(x)= 1+ \sum_{\nu=1}^{\infty} \frac1{P(\nu)}x^\nu$.
Here Lemma \ref{lemma_Kaluza} cannot be used, since the sequence
$(a_\nu)_{\nu \ge 0}$ with $a_0=1$ and $a_\nu= \frac1{P(\nu)}$ ($\nu
\ge 1$) is not log-convex. However, observe
that DeSalvo and Pak \cite[Thm.\ 1.1]{DeSPak2015} recently proved
that the sequence $(P(n))$ is log-concave for $n>25$, that is, $(1/P(n))$ is log-convex for
$n>25$.

\begin{open}  Estimate the alternating sum \eqref{asympt_1_per_a}.
\end{open}

\subsection{Sum-of-unitary-divisors function}
\label{Sect_sigma_star}

Recall that $d$ is said to be a unitary divisor of $n$ if $d\mid n$ and $\gcd(d,n/d)=1$.
Let $\sigma^*(n)$ denote, as usual, the sum of
unitary divisors of $n$. The function $\sigma^*$ is multiplicative
and $\sigma^*(p^\nu)=p^{\nu}+1$ ($\nu \ge 1$). One has

\begin{equation*}
\sum_{n=1}^{\infty} \frac{\sigma^*(n)}{n^s}  =
\frac{\zeta(s)\zeta(s-1)}{\zeta(2s-1)} \quad (\Re s>2).
\end{equation*}

Furthermore,
\begin{equation*}
\sum_{n\le x} \sigma^*(n) = \frac{\pi^2}{12\zeta(3)} x^2 + O\left( x
(\log x)^{5/3} \right),
\end{equation*}
established by Sitaramachandrarao and Suryanarayana \cite[Eq.\
(1.4)]{SitSur1973}.

\begin{proposition}
\begin{equation} \label{Dirichlet_sigma_star}
\sum_{n=1}^{\infty} (-1)^{n-1} \frac{\sigma^*(n)}{n^s}  =
\left(1-\frac{6}{2^s}+\frac{6}{2^{2s}}\right)
\left(1-\frac2{2^{2s}}\right)^{-1}
\frac{\zeta(s)\zeta(s-1)}{\zeta(2s-1)}  \quad (\Re s>2).
\end{equation}
\end{proposition}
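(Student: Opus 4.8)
The plan is to obtain the claimed identity as a direct application of Proposition \ref{prop_1}, combined with the Dirichlet series $\sum_{n\ge1}\sigma^*(n)/n^s=\zeta(s)\zeta(s-1)/\zeta(2s-1)$ recalled just above. By \eqref{form_Dir_altern}, everything reduces to evaluating the Euler factor of $\sigma^*$ at $p=2$, namely $E_2(s):=\sum_{\nu\ge0}\sigma^*(2^\nu)/2^{\nu s}$, and substituting it into that formula.

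To compute $E_2(s)$ I would use $\sigma^*(1)=1$ and $\sigma^*(2^\nu)=2^\nu+1$ for $\nu\ge1$. Setting $x=2^{-s}$,
\[
E_2(s)=1+\sum_{\nu\ge1}(2^\nu+1)x^\nu=1+\frac{2x}{1-2x}+\frac{x}{1-x},
\]
the two geometric series converging for $|x|<1/2$, i.e.\ $\Re s>1$. Placing the three contributions over the common denominator $(1-2x)(1-x)$, the linear terms cancel and the numerator collapses to $1-2x^2$, so $E_2(s)=(1-2x^2)/\bigl((1-2x)(1-x)\bigr)$.

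Then I would substitute into Proposition \ref{prop_1}: the alternating Dirichlet series equals $\dfrac{\zeta(s)\zeta(s-1)}{\zeta(2s-1)}\bigl(2/E_2(s)-1\bigr)$, where a short expansion gives
\[
\frac{2}{E_2(s)}-1=\frac{2(1-2x)(1-x)-(1-2x^2)}{1-2x^2}=\frac{1-6x+6x^2}{1-2x^2}.
\]
Putting back $x=2^{-s}$ yields precisely the prefactor $\bigl(1-6\cdot2^{-s}+6\cdot2^{-2s}\bigr)\bigl(1-2\cdot2^{-2s}\bigr)^{-1}$ multiplying $\zeta(s)\zeta(s-1)/\zeta(2s-1)$. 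Finally, for $\Re s>2$ the series $\sum_{n\ge1}\sigma^*(n)/n^s$ --- and hence $\sum_{n\ge1}(-1)^{n-1}\sigma^*(n)/n^s$ --- converges absolutely, so Proposition \ref{prop_1} applies there in the genuine (not merely formal) sense, giving the stated range. There is no real obstacle here: the only thing requiring care is the elementary rational-function algebra, in particular noticing that the numerator of $E_2(s)$ simplifies so cleanly; everything else is a direct citation of Proposition \ref{prop_1} and the known Dirichlet series of $\sigma^*$.
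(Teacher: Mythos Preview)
Your proof is correct and follows exactly the approach the paper uses for all such propositions: apply Proposition~\ref{prop_1} together with the known Dirichlet series $\sum_{n\ge1}\sigma^*(n)/n^s=\zeta(s)\zeta(s-1)/\zeta(2s-1)$, and compute the Euler factor at $p=2$. The paper does not spell out the algebra here, but your evaluation of $E_2(s)$ and of $2/E_2(s)-1$ is accurate.
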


\begin{theorem}
\begin{equation*}
\sum_{n\le x} (-1)^{n-1} \sigma^*(n) = - \frac{\pi^2}{84\zeta(3)}
x^2 + O\left( x (\log x)^{5/3} \right).
\end{equation*}
\end{theorem}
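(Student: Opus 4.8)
The plan is to run the convolution method of Section~\ref{Subsect_Method} along the lines of the proof of Proposition~\ref{Prop_f}, with input the estimate $\sum_{n\le x}\sigma^*(n)=\frac{\pi^2}{12\zeta(3)}x^2+O\bigl(x(\log x)^{5/3}\bigr)$ quoted above, so that $C_{\sigma^*}=\pi^2/(12\zeta(3))$ and $R_{\sigma^*}(x)=(\log x)^{5/3}$ (which is nondecreasing, $\gg 1$, and $o(x)$). First I would record the Bell series for $p=2$: since $\sigma^*(2^\nu)=2^\nu+1$ for $\nu\ge 1$,
\[
S_{\sigma^*}(x)=\sum_{\nu=0}^{\infty}\sigma^*(2^\nu)x^\nu=\frac1{1-2x}+\frac{x}{1-x}=\frac{1-2x^2}{(1-x)(1-2x)}\qquad\Bigl(|x|<\tfrac12\Bigr),
\]
whence the reciprocal power series is $\overline{S}_{\sigma^*}(x)=\dfrac{(1-x)(1-2x)}{1-2x^2}$, with radius of convergence $\overline{r}_{\sigma^*}=1/\sqrt2>1/4$; in particular $S_{\sigma^*}(1/4)=7/3\ne 0$ and $\overline{S}_{\sigma^*}(1/4)$ both converge.

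The point at which this case departs from Proposition~\ref{Prop_f} is that $(b_\nu)$ is \emph{not} bounded: expanding $\frac1{1-2x^2}=\sum_{k\ge0}2^kx^{2k}$ gives $b_0=1$, $b_1=-3$, and $b_{2m}=2^{m+1}$, $b_{2m+1}=-3\cdot 2^m$ for $m\ge1$, so only $b_\nu\ll(\sqrt2)^\nu$. Hence hypothesis~(iii) of Proposition~\ref{Prop_f} fails literally and one must reinspect its proof. Writing $h_{\sigma^*}$ for the multiplicative function of \eqref{convo}, supported on powers of $2$ with $h_{\sigma^*}(2^\nu)=2b_\nu\ll(\sqrt2)^\nu$, identity~\eqref{sum} and the input estimate give
\[
\sum_{n\le x}(-1)^{n-1}\sigma^*(n)=C_{\sigma^*}x^2\sum_{d\le x}\frac{h_{\sigma^*}(d)}{d^2}+O\Bigl(x(\log x)^{5/3}\sum_{d\le x}\frac{|h_{\sigma^*}(d)|}{d}\Bigr).
\]
Here $\sum_{d\le x}|h_{\sigma^*}(d)|/d\ll\sum_{\nu\ge0}(\sqrt2/2)^\nu=O(1)$, and completing the first sum to infinity costs $x^2\sum_{2^\nu>x}|h_{\sigma^*}(2^\nu)|/2^{2\nu}\ll x^2\sum_{2^\nu>x}(\sqrt2/4)^\nu\ll x^{1/2}=o\bigl(x(\log x)^{5/3}\bigr)$. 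So the error term is $O\bigl(x(\log x)^{5/3}\bigr)$, exactly as in Proposition~\ref{Prop_f}; the only extra input beyond that proof is that the geometric growth rate $\sqrt2$ of $(b_\nu)$ is strictly below $2$.

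It remains to compute the constant. By \eqref{const} with $s=2$,
\[
\sum_{d=1}^{\infty}\frac{h_{\sigma^*}(d)}{d^2}=\frac{2}{S_{\sigma^*}(1/4)}-1=\frac{6}{7}-1=-\frac17,
\]
equivalently the factor in \eqref{Dirichlet_sigma_star} at $s=2$ equals $\dfrac{1-6/4+6/16}{1-2/16}=\dfrac{-1/8}{7/8}=-\dfrac17$. Hence the coefficient of $x^2$ is $C_{\sigma^*}\cdot(-\tfrac17)=-\dfrac{\pi^2}{84\zeta(3)}$, which finishes the proof. The main obstacle is the one flagged above: one cannot cite Proposition~\ref{Prop_f} as a black box because $\overline{S}_{\sigma^*}$ has geometrically growing coefficients, so one has to re-verify that the bound $b_\nu\ll M^\nu$ with $M=\sqrt2<2$ still keeps $\sum_{d\le x}|h_{\sigma^*}(d)|/d$ bounded and the discarded main-term tail of size $x^{\log M/\log 2}=x^{1/2}$ negligible; apart from that the computation repeats the arguments already used for $\varphi$, $\psi$ and $\sigma$.
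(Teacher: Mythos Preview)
Your proof is correct and follows the same convolution method as the paper, which simply says ``Apply Proposition~\ref{Prop_f}. The Dirichlet series representation \eqref{Dirichlet_sigma_star} can be used, the function $h_{\sigma^*}$ is bounded.'' In fact you have been \emph{more} careful than the paper here: your computation $b_{2m}=2^{m+1}$, $b_{2m+1}=-3\cdot 2^m$ shows that $h_{\sigma^*}(2^\nu)=2b_\nu$ is \emph{not} bounded but grows like $(\sqrt2)^\nu$, so hypothesis~(iii) of Proposition~\ref{Prop_f} is not literally satisfied and the paper's one-line justification is inaccurate. Your direct verification that $\sum_{d}|h_{\sigma^*}(d)|/d$ still converges and that the tail of the main-term sum is $O(x^{1/2})$---both because $\sqrt2<2$---correctly fills this gap, and the constant computation via $S_{\sigma^*}(1/4)=7/3$ matches the factor $-1/7$ obtained from \eqref{Dirichlet_sigma_star} at $s=2$.
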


\begin{proof} Apply Proposition \ref{Prop_f}. The Dirichlet series representation
\eqref{Dirichlet_sigma_star} can be used, the function
$h_{\sigma^*}$ is bounded.
\end{proof}

It is known that
\begin{equation*}
\sum_{n\le x} \frac1{\sigma^*(n)} = B^*\log x+ D^* +O \left(x^{-1}
(\log x)^{5/3} (\log \log x)^{4/3} \right),
\end{equation*}
obtained by Sita~Ramaiah and Suryanarayana \cite[p.\
1352]{SitSur1980}, where $B^*$ and $D^*$ are explicit constants.
Here, according to Proposition \ref{prop_mean_2},
\begin{equation*}
B^*= \prod_{p\in \P} \left(1-\frac1{p} \right)\left(1+
\sum_{\nu=1}^{\infty} \frac1{p^\nu+1} \right).
\end{equation*}

It follows from the same Proposition \ref{prop_mean_2} that 
the limit
\begin{equation*}
E^*:= \lim_{x\to \infty} \frac1{\log x} \sum_{n\le x} (-1)^{n-1}
\frac1{\sigma^*(n)} =  B^* \left(2\left(1+\sum_{\nu=1}^{\infty}
\frac1{2^{\nu}+1} \right)^{-1}-1 \right)
\end{equation*}
exists.

Moreover, by Corollary \ref{Cor_1_per_f} we deduce that
\begin{equation} \label{asympt_1_per_sigma_star}
\sum_{n\le x} (-1)^{n-1} \frac1{\sigma^*(n)} = E^* \log x + F^* + O
\left(x^{-u} (\log x)^{5/3} (\log \log x)^{4/3} \right),
\end{equation}
with an explicit constant $F^*$ and some $u>0$. Bordell\`{e}s and Cloitre \cite[Cor.\ 4, (vi)]{BorClo2013}
established that the error term of \eqref{asympt_1_per_sigma_star}
is $O\left(x^{-1} (\log x)^4\right)$.

To use our method, we need a better estimate for the coefficients
$b_{\nu}$ of the reciprocal of the power series
\begin{equation*}
S_{1/\sigma^*}(x)= 1+\sum_{\nu=1}^{\infty}
\frac{x^{\nu}}{2^{\nu}+1}.
\end{equation*}

Here $b_0=1$, $b_1=-\frac1{3}$, $b_2=-\frac{4}{45}$,
$b_3=-\frac{2}{135}$, $b_4=\frac{32}{34425}$, etc.

\begin{open} We conjecture that $b_{\nu} \ll 1/2^{\nu}$ as $\nu \to \infty$.
If this is true, then it follows from Proposition \ref{Prop_1_per_f}
that the error term in \eqref{asympt_1_per_sigma_star} can be
improved into $O\left(x^{-1} (\log x)^{8/3} (\log \log
x)^{4/3}\right)$.

We pose as an open problem to prove this.
\end{open}

\subsection{Unitary Euler function}

Let $\varphi^*$ be the unitary analog of Euler's $\varphi$ function.
The function $\varphi^*$ is multiplicative and
$\varphi(p^\nu)=p^{\nu}-1$ for every prime power $p^\nu$ ($\nu \ge
1$). One has

\begin{equation*}
\sum_{n=1}^{\infty} \frac{\varphi^*(n)}{n^s}  = \zeta(s)\zeta(s-1)
\prod_p \left(1-\frac2{p^s}+\frac1{p^{2s-1}} \right) \quad (\Re
s>2).
\end{equation*}

Furthermore,
\begin{equation*}
\sum_{n\le x} \varphi^*(n) = \frac{C}{2} x^2 + O\left( x (\log
x)^{5/3}(\log \log x)^{4/3} \right),
\end{equation*}
where $C$ is defined by \eqref{def_C}. See Sitaramachandrarao and
Suryanarayana \cite[Eq.\ (1.5)]{SitSur1973}.

\begin{proposition}
\begin{equation} \label{Dir_phi_star}
\sum_{n=1}^{\infty} (-1)^{n-1} \frac{\varphi^*(n)}{n^s}  =
\left(1-\frac1{2^{s-2}}+\frac1{2^{2s-1}} \right)
\left(1-\frac1{2^{s-1}}+\frac1{2^{2s-1}} \right)^{-1}
\sum_{n=1}^{\infty} \frac{\varphi^*(n)}{n^s}\quad (\Re s>2),
\end{equation}
\end{proposition}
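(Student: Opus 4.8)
The plan is to apply Proposition~\ref{prop_1} with $f=\varphi^*$, exactly as was done above for $\varphi$, $\psi$, $\sigma$ and the other classical functions. By \eqref{form_Dir_altern} it suffices to evaluate the Bell series of $\varphi^*$ at the prime $2$, namely
\[
\sum_{\nu=0}^{\infty} \frac{\varphi^*(2^\nu)}{2^{\nu s}} = 1 + \sum_{\nu=1}^{\infty} \frac{2^\nu-1}{2^{\nu s}},
\]
where I have used $\varphi^*(1)=1$ and $\varphi^*(2^\nu)=2^\nu-1$ for $\nu\ge 1$.

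First I would set $x=2^{-s}$ and split the right-hand side into two geometric series, both convergent for $\Re s>1$ and hence certainly for $\Re s>2$:
\[
1 + \sum_{\nu=1}^{\infty} (2^\nu-1)x^\nu = 1 + \frac{2x}{1-2x} - \frac{x}{1-x} = \frac{1-2x+2x^2}{(1-x)(1-2x)}.
\]
Then \eqref{form_Dir_altern} produces the factor
\[
\frac{2}{\sum_{\nu\ge 0} \varphi^*(2^\nu)/2^{\nu s}} - 1 = \frac{2(1-x)(1-2x) - (1-2x+2x^2)}{1-2x+2x^2} = \frac{1-4x+2x^2}{1-2x+2x^2},
\]
and substituting back $4x=2^{2-s}$, $2x=2^{1-s}$, $2x^2=2^{1-2s}$ turns this into
\[
\left(1-\frac1{2^{s-2}}+\frac1{2^{2s-1}}\right)\left(1-\frac1{2^{s-1}}+\frac1{2^{2s-1}}\right)^{-1},
\]
which is exactly the coefficient asserted in \eqref{Dir_phi_star} multiplying $D(\varphi^*,s)=\sum_{n\ge 1}\varphi^*(n)/n^s$.

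I do not expect any genuine obstacle. The identity of Proposition~\ref{prop_1} holds formally, and it upgrades to an identity of holomorphic functions on $\Re s>2$ once we know that $D(\varphi^*,s)$ converges there, which is guaranteed by the Dirichlet series representation recalled just before the statement (its singularities come from $\zeta(s)\zeta(s-1)$, the rightmost at $s=2$); the local factor at $2$ is a rational function of $2^{-s}$ with no poles in that half-plane. Thus the only real ``work'' is the elementary simplification of the rational function in $x=2^{-s}$ displayed above, carried out just as in the analogous propositions for $\varphi$ and $\sigma$.
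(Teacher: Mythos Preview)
Your proposal is correct and follows exactly the approach the paper intends: applying Proposition~\ref{prop_1} to $\varphi^*$ and simplifying the local factor at $p=2$, just as in the analogous propositions for $\varphi$, $\psi$, $\sigma$, and $\sigma^*$. The paper states this proposition without an explicit proof, treating it as a routine consequence of Proposition~\ref{prop_1}; your computation fills in precisely those details.
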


\begin{theorem}
\begin{equation*}
\sum_{n\le x} (-1)^{n-1} \varphi^*(n) = \frac{C}{10} x^2 + O\left( x
(\log x)^{5/3}(\log \log x)^{4/3} \right),
\end{equation*}
where $C$ is defined by \eqref{def_C}.
\end{theorem}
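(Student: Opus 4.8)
The plan is to run the convolution method of Section~\ref{Subsect_Method}, as in Proposition~\ref{Prop_f}, with $f=\varphi^*$. First I would collect the ingredients. For the summatory function I would quote the estimate of Sitaramachandrarao and Suryanarayana recorded above, $\sum_{n\le x}\varphi^*(n)=\tfrac{C}{2}x^2+O\bigl(x(\log x)^{5/3}(\log\log x)^{4/3}\bigr)$, so condition (i) holds with $C_f=C/2$ and $R_f(x)=(\log x)^{5/3}(\log\log x)^{4/3}$ (nondecreasing, $\gg1$, $o(x)$). Next I would compute the Bell series of $\varphi^*$ for $p=2$: since $\varphi^*(1)=1$ and $\varphi^*(2^\nu)=2^\nu-1$ for $\nu\ge1$,
\[
S_{\varphi^*}(x)=1+\sum_{\nu\ge1}(2^\nu-1)x^\nu=1+\frac{2x}{1-2x}-\frac{x}{1-x}=\frac{1-2x+2x^2}{(1-x)(1-2x)}\qquad\Bigl(|x|<\tfrac12\Bigr),
\]
in agreement with \eqref{Dir_phi_star}; in particular $S_{\varphi^*}(1/4)=5/3$, which gives condition (ii), and the reciprocal power series is
\[
\overline{S}_{\varphi^*}(x)=\frac{(1-x)(1-2x)}{1-2x+2x^2}=1-\frac{x}{1-2x+2x^2}.
\]

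The one delicate point — and the main (mild) obstacle — is that condition (iii) of Proposition~\ref{Prop_f} as literally stated fails here: the sequence $(b_\nu)$ is unbounded. Factoring $1-2x+2x^2=\bigl(1-(1+i)x\bigr)\bigl(1-(1-i)x\bigr)$ with $|1\pm i|=\sqrt2$, one finds $b_0=1$ and $b_\nu=-2^{\nu/2}\sin\tfrac{\nu\pi}{4}$ for $\nu\ge1$, so that $|b_\nu|\le2^{\nu/2}$ and $\overline{S}_{\varphi^*}$ has radius of convergence $1/\sqrt2$. However, the proof of Proposition~\ref{Prop_f} uses boundedness of $(b_\nu)$ only through (a) the validity of \eqref{const} at $s=2$, which needs $\min(r_{\varphi^*},\overline r_{\varphi^*})>1/4$, and (b) the convergence of $\sum_\nu|b_\nu|2^{-\nu}$ and $\sum_\nu|b_\nu|4^{-\nu}$ together with a tail bound of the form $x^2\sum_{d>x}|h_{\varphi^*}(d)|d^{-2}=O(xR_f(x))$; since $r_{\varphi^*}=1/2$ and $\overline r_{\varphi^*}=1/\sqrt2>1/2$, all of these hold. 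So I would either invoke Proposition~\ref{Prop_f} with hypothesis (iii) weakened to ``$\overline r_f>1/2$'', or, more cleanly, simply re-run the short computation directly, using that $h_{\varphi^*}$ is supported on powers of $2$ with $|h_{\varphi^*}(2^\nu)|=2|b_\nu|\le2^{\nu/2+1}$.

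That computation is routine. By \eqref{sum} and the monotonicity of $R_f$,
\begin{align*}
\sum_{n\le x}(-1)^{n-1}\varphi^*(n)
&=\sum_{2^\nu\le x}h_{\varphi^*}(2^\nu)\Bigl(\tfrac{C}{2}\,\frac{x^2}{4^\nu}+O\bigl(\tfrac{x}{2^\nu}R_f(x/2^\nu)\bigr)\Bigr)\\
&=\tfrac{C}{2}x^2\sum_{2^\nu\le x}\frac{h_{\varphi^*}(2^\nu)}{4^\nu}+O\Bigl(xR_f(x)\sum_{\nu\ge0}\frac{|h_{\varphi^*}(2^\nu)|}{2^\nu}\Bigr),
\end{align*}
where $\sum_{\nu\ge0}|h_{\varphi^*}(2^\nu)|2^{-\nu}\ll\sum_\nu2^{-\nu/2}<\infty$. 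Completing the inner sum via \eqref{const} at $s=2$ gives $\sum_{\nu\ge0}h_{\varphi^*}(2^\nu)4^{-\nu}=2/S_{\varphi^*}(1/4)-1=\tfrac15$, while the discarded tail is $O\bigl(x^2\sum_{2^\nu>x}2^{-3\nu/2}\bigr)=O(x^{1/2})$, which is absorbed into $O(xR_f(x))$. Hence the main term equals $\tfrac{C}{2}\cdot\tfrac15\,x^2=\tfrac{C}{10}x^2$ and the error term is $O\bigl(x(\log x)^{5/3}(\log\log x)^{4/3}\bigr)$, as asserted. As a consistency check, $\tfrac15=2/S_{\varphi^*}(1/4)-1$ is also the value at $s=2$ of the bracketed rational factor in \eqref{Dir_phi_star}, exactly as in the Remark on the main-term constant for $\sum_{n\le x}(-1)^{n-1}\varphi(n)$.
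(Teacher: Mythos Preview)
Your argument is correct and follows the same convolution approach as the paper, but you are in fact \emph{more} careful than the paper's own proof. The paper merely writes ``Apply Proposition~\ref{Prop_f}. The Dirichlet series representation \eqref{Dir_phi_star} can be used. The function $h_{\varphi^*}$ is bounded.'' Your computation shows this last assertion is false: with $b_0=1$ and $b_\nu=-2^{\nu/2}\sin(\nu\pi/4)$ for $\nu\ge1$ (which checks against the recursion: $b_1=-1$, $b_2=-2$, $b_3=-2$, $b_4=0$, $b_5=4$, $b_6=8,\ldots$), the coefficients $h_{\varphi^*}(2^\nu)=2b_\nu$ are unbounded and condition~(iii) of Proposition~\ref{Prop_f} does not literally hold.

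You then supply exactly the right fix: what the proof of Proposition~\ref{Prop_f} actually needs is that $\overline r_f>1/2$, so that $\sum_\nu|b_\nu|2^{-\nu}<\infty$ (for the error term), $\sum_\nu|b_\nu|4^{-\nu}<\infty$ with tail $\sum_{2^\nu>x}|b_\nu|4^{-\nu}=O(x^{-1/2})$ (for completing the main-term sum), and $\min(r_f,\overline r_f)>1/4$ (for \eqref{const} at $s=2$). Since $\overline r_{\varphi^*}=1/\sqrt2>1/2$, all of this holds and your direct re-run of the convolution computation goes through cleanly, yielding the constant $C_{\varphi^*}\bigl(2/S_{\varphi^*}(1/4)-1\bigr)=\tfrac{C}{2}\cdot\tfrac15=\tfrac{C}{10}$. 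So your proof stands; it is essentially the paper's intended argument with a genuine oversight repaired.
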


\begin{proof} Apply Proposition \ref{Prop_f}. The Dirichlet series representation \eqref{Dir_phi_star}
can be used. The function $h_{\varphi^*}$ is bounded.
\end{proof}

It is known that
\begin{equation*}
\sum_{n\le x} \frac1{\varphi^*(n)} = L^*\log x+ M^* +O \left(x^{-1}
(\log x)^{5/3}\right),
\end{equation*}
due to Sita~Ramaiah and Subbarao \cite[Thm.\ 3.1]{SitSub1991},
improving the error term of Sita~Ramaiah and Suryanarayana
\cite[Thm.\ 3.2]{SitSur1980}, where $L^*$ and $M^*$ are explicit
constants. Here, according to Proposition \ref{prop_mean_2},
\begin{equation*}
L^*= \prod_{p\in \P} \left(1-\frac1{p} \right)\left(1+
\sum_{\nu=1}^{\infty} \frac1{p^\nu-1} \right).
\end{equation*}

It follows from the same Proposition \ref{prop_mean_2} that 
the limit
\begin{equation*}
T^*:
 = \lim_{x\to \infty} \frac1{\log x} \sum_{n\le x} (-1)^{n-1}
\frac1{\varphi^*(n)} =  L^* \left(\frac{2}{1+K}-1 \right)
\end{equation*}
exists,
where $K$ is the Erd\H{os}-Borwein constant defined by
\eqref{const_Erdos}. Moreover, by Corollary \ref{Cor_1_per_f} (take
$q=1/2$) we deduce that
\begin{equation} \label{asympt_1_per_varphi_star}
\sum_{n\le x} (-1)^{n-1} \frac1{\varphi^*(n)} = T^* \log x + U^* +
O\left(x^{-u} (\log x)^{5/3} \right).
\end{equation}
with an explicit constant $U^*$ and some $u>0$.

Note that this example was not considered by Bordell\`{e}s and
Cloitre \cite{BorClo2013}. To use our method one needs to consider
the power series
\begin{equation*}
S_{1/\varphi^*}(x)= 1+ \sum_{\nu=1}^{\infty}
\frac{x^{\nu}}{2^{\nu}-1},
\end{equation*}
where the sequence $a_0=1$, $a_\nu= \frac1{2^{\nu}-1}$ is log-convex
but only for $\nu \ge 1$, that is $a_\nu^2\le a_{\nu-1}a_{\nu+1}$
holds for $\nu \ge 2$ and is false for $\nu=1$. Hence Lemma
\ref{lemma_Kaluza} cannot be used. In fact, the coefficients
$b_{\nu}$ of the reciprocal power series are $b_0=1$,
$b_1=-1$, $b_2=\frac{2}{3}$, $b_3=-\frac{10}{21}$,
$b_4=\frac{104}{315}$, etc. (not all of $b_1, b_2, \ldots$ are
negative).

\begin{open} We conjecture that $b_{\nu} \ll 1/2^{\nu}$ as $\nu \to \infty$.
If this is true, then it follows from Proposition \ref{Prop_1_per_f}
that the error term in \eqref{asympt_1_per_varphi_star} can be
improved into $O\left(x^{-1} (\log x)^{8/3}\right)$.

We pose as an open problem to prove this.
\end{open}

\subsection{Unitary squarefree kernel} \label{Sect_Unitary squarefree kernel}

Let $\kappa^*(n)$ denote the greatest squarefree unitary divisor of $n$. The function $\kappa^*$ is multiplicative, $\kappa^*(p)=p$ and
$\kappa^*(p^\nu)=1$ for every prime $p$ and $\nu \ge 2$. One has
\begin{equation} \label{kappastar}
\sum_{n\le x} \kappa^*(n)= \frac1{2} \widetilde{C} x^2 + O(R_{\kappa^*}(x)),
\end{equation}
where
\begin{equation} \label{tilde_C}
\widetilde{C} = \prod_{p\in \P}
\left(1-\frac{p^2+p-1}{p^3(p+1)} \right),
\end{equation}
$R_{\kappa^*}(x)= x^{3/2}\delta(x)$, with $\delta(x)$ defined by \eqref{delta}, unconditionally, and $R_{\kappa^*}(x)= x^{7/5}\omega(x)$, with
$\omega(x)$ defined by \eqref{omega}, assuming RH, due to Sita Ramaiah and Suryanarayana \cite[Thm.\ 5.7, 5.8]{SitSur1982}.

\begin{theorem} With the notation above,
\begin{equation} \label{altern_kappastar}
\sum_{n\le x} (-1)^{n-1} \kappa^*(n)= \frac{5}{38} \widetilde{C} x^2
+ O(R_{\kappa^*}(x)).
\end{equation}
\end{theorem}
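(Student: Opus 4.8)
The plan is to run the convolution method of Section~\ref{Subsect_Method}, following closely the proof of Theorem~\ref{Th_kappa}; the new feature here is that the auxiliary function $h_{\kappa^*}$ will turn out to be \emph{unbounded}, so Proposition~\ref{Prop_f} cannot be invoked as a black box.

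First I would compute the Bell series of $\kappa^*$ at $p=2$. Since $\kappa^*(2)=2$ and $\kappa^*(2^\nu)=1$ for all $\nu\ge 2$,
\begin{equation*}
S_{\kappa^*}(x)=1+2x+\sum_{\nu\ge 2} x^\nu=\frac{1+x-x^2}{1-x}\qquad(|x|<1),
\end{equation*}
hence $\overline{S}_{\kappa^*}(x)=\frac{1-x}{1+x-x^2}$. The polynomial $1+x-x^2$ vanishes at $x=\frac{1\mp\sqrt5}{2}$, so $\overline{S}_{\kappa^*}$ has radius of convergence $\frac{\sqrt5-1}{2}=1/\phi$, where $\phi:=\frac{1+\sqrt5}{2}$; a partial-fraction expansion (or the recursion $b_\nu=-b_{\nu-1}+b_{\nu-2}$ for $\nu\ge2$) shows that the $b_\nu$ are, up to sign, Fibonacci numbers, and in particular $b_\nu\ll\phi^\nu$. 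Consequently $h_{\kappa^*}(2^\nu)=2b_\nu\ll\phi^\nu$ while $h_{\kappa^*}(p^\nu)=0$ for every prime $p>2$ and $\nu\ge1$; the sequence $(b_\nu)$ is not bounded, so hypothesis (iii) of Proposition~\ref{Prop_f} fails.

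Next I would substitute $\sum_{j\le y}\kappa^*(j)=\frac12\widetilde{C} y^2+O(R_{\kappa^*}(y))$ into \eqref{sum}, obtaining
\begin{equation*}
\sum_{n\le x}(-1)^{n-1}\kappa^*(n)=\sum_{2^\nu\le x} h_{\kappa^*}(2^\nu)\left(\frac{\widetilde{C}}{2}\cdot\frac{x^2}{4^\nu}+O\bigl(R_{\kappa^*}(x/2^\nu)\bigr)\right).
\end{equation*}
For the main term, the inequality $\phi<4$ makes $\sum_{\nu\ge 0} h_{\kappa^*}(2^\nu)4^{-\nu}$ absolutely convergent; the tail satisfies $\sum_{2^\nu>x}h_{\kappa^*}(2^\nu)4^{-\nu}\ll x^{\log_2(\phi/4)}$, so after multiplication by $x^2$ it contributes only $O(x^{\log_2\phi})=o(R_{\kappa^*}(x))$. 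Since $\tfrac14<\min(1,1/\phi)$, both $S_{\kappa^*}(1/4)$ and $\overline{S}_{\kappa^*}(1/4)$ converge, and \eqref{const} with $s=2$ gives
\begin{equation*}
\sum_{\nu\ge 0}\frac{h_{\kappa^*}(2^\nu)}{4^\nu}=\frac{2}{S_{\kappa^*}(1/4)}-1=\frac{2}{19/12}-1=\frac{5}{19},
\end{equation*}
because $S_{\kappa^*}(1/4)=\frac{1+1/4-1/16}{1-1/4}=\frac{19}{12}$. Thus the main term equals $\frac{\widetilde{C}}{2}\cdot\frac{5}{19}\,x^2=\frac{5}{38}\widetilde{C} x^2$, which is the leading term in \eqref{altern_kappastar}.

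The remaining, and essentially the only nontrivial, step is to bound the error sum $\sum_{2^\nu\le x}|h_{\kappa^*}(2^\nu)|\,R_{\kappa^*}(x/2^\nu)$ by $O(R_{\kappa^*}(x))$ in spite of the geometric growth of $|h_{\kappa^*}(2^\nu)|$. Unconditionally $R_{\kappa^*}(y)=y^{3/2}\delta(y)$, and although $\delta$ is nonincreasing, $y^\varepsilon\delta(y)$ is nondecreasing for every $\varepsilon>0$ (the device already used in the proof of Theorem~\ref{Th_kappa}); hence $\delta(x/2^\nu)\ll 2^{\nu\varepsilon}\delta(x)$, and fixing $\varepsilon$ small enough that $\phi\,2^\varepsilon<2^{3/2}$ --- possible since $\phi<2^{3/2}$ --- yields
\begin{equation*}
\sum_{2^\nu\le x}|h_{\kappa^*}(2^\nu)|\,(x/2^\nu)^{3/2}\delta(x/2^\nu)\ll x^{3/2}\delta(x)\sum_{\nu\ge 0}\Bigl(\frac{\phi\,2^\varepsilon}{2^{3/2}}\Bigr)^{\nu}\ll R_{\kappa^*}(x).
\end{equation*}
Under RH one has $R_{\kappa^*}(y)=y^{7/5}\omega(y)$ with $\omega$ nondecreasing, so $\omega(x/2^\nu)\le\omega(x)$, and since $\phi<2^{7/5}$ the analogous geometric series converges immediately, again giving $O(R_{\kappa^*}(x))$. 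The finitely many indices $\nu$ for which $x/2^\nu$ stays bounded --- where the asymptotic for $\sum_{j\le y}\kappa^*(j)$ is trivial --- add at most $O(x^{\log_2\phi})$ and are absorbed. Putting the main and error estimates together proves \eqref{altern_kappastar}. The delicate points I anticipate are exactly the numerical inequalities $\phi<2^{3/2}$ and $\phi<2^{7/5}$ and the correct handling of the non-monotone factor $\delta$; everything else is routine bookkeeping.
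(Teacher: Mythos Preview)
Your argument is correct and follows the same convolution route the paper indicates (``quite similar to the proof of Theorem~\ref{Th_kappa}'' via $S_{\kappa^*}(x)=(x^2-x-1)/(x-1)$). In fact you are more careful than the paper's one-line sketch: you explicitly observe that $h_{\kappa^*}(2^\nu)=2b_\nu$ with $|b_\nu|=F_{\nu+2}\asymp\phi^{\nu}$, so hypothesis~(iii) of Proposition~\ref{Prop_f} fails and the model proof of Theorem~\ref{Th_kappa} (which relied on the boundedness of $h_\kappa$) does not transfer verbatim; your verification of the inequalities $\phi<2^{3/2}$ and $\phi<2^{7/5}$, together with the $y^\varepsilon\delta(y)$ monotonicity trick, is exactly the extra ingredient needed to close the error estimate that the paper leaves implicit.
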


\begin{proof} We have
\begin{equation*}
S_{\kappa^*}(x)= \frac{x^2-x-1}{x-1} \quad (|x|<1)
\end{equation*}
and the proof is quite similar to the proof of Theorem \ref{Th_kappa}.
\end{proof}

An asymptotic formula for the reciprocal of $\kappa^*(n)$ is simpler
to obtain than for the reciprocal of the squarefree kernel $\kappa(n)$,
discussed in Section \ref{Sect_Squarefree kernel}. It is a result of  Suryanarayana and Subrahmanyam \cite[Cor.\ 3.4.1]{SurSub1981} that
\begin{equation} \label{asymp_1_per_kappa_star}
\sum_{n\le x} \frac1{\kappa^*(n)}= \frac{A\zeta(3/2)}{\zeta(3)} x^{1/2} + \frac{B\zeta(2/3)}{\zeta(2)}x^{1/3}+ O(x^{1/5}),
\end{equation}
where
\begin{equation*}
A=\prod_{p\in \P} \left(1+\frac{\sqrt{p}-1}{p(p-\sqrt{p}+1)} \right), \quad
B=\prod_{p\in \P} \left(1+\frac{p^{1/3}-1}{p(p^{2/3}-p^{1/3}+1)} \right),
\end{equation*}

We deduce the next result.
\begin{theorem}
\begin{equation} \label{altern_1/kappa_star}
\sum_{n\le x} (-1)^{n-1} \frac1{\kappa^*(n)}= \frac{A^*\zeta(3/2)}{\zeta(3)} x^{1/2} + \frac{B^*\zeta(2/3)}{\zeta(2)}x^{1/3}+ O(x^{1/5}),
\end{equation}
where
\begin{equation} \label{Astar_Bstar}
A^*=\frac{A(9-12\sqrt{2})}{23}, \quad
B^*=\frac{B(2^{5/3}-3\cdot 2^{1/3}-1)}{2^{5/3}- 2^{1/3}+1} .
\end{equation}
\end{theorem}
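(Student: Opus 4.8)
The plan is to apply the convolution method of Section~\ref{Subsect_Method} to the multiplicative function $1/\kappa^*$, in the same spirit as the proof of Theorem~\ref{Th_kappa}; Proposition~\ref{Prop_f} cannot be invoked directly, since the main term of the known estimate~\eqref{asymp_1_per_kappa_star} is a sum of two fractional powers of $x$ rather than a single one, but the underlying routine is the same. Since $1/\kappa^*(2)=1/2$ and $1/\kappa^*(2^\nu)=1$ for $\nu\ge2$, the relevant Bell series is
\[
S_{1/\kappa^*}(x)=1+\frac x2+\sum_{\nu\ge2}x^\nu=\frac{2-x+x^2}{2(1-x)}\qquad(|x|<1),
\]
with reciprocal power series
\[
\overline{S}_{1/\kappa^*}(x)=\frac{2(1-x)}{2-x+x^2}=\sum_{\nu\ge0}b_\nu x^\nu .
\]
The zeros of $x^2-x+2$ are $\tfrac12(1\pm i\sqrt7)$, both of modulus $\sqrt2$, so $\overline{S}_{1/\kappa^*}$ has radius of convergence $\overline{r}_{1/\kappa^*}=\sqrt2>1$; in particular $(b_\nu)$ is bounded (indeed geometrically decaying). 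Hence the multiplicative function $h_{1/\kappa^*}$ of~\eqref{convo}, which is supported on powers of $2$ with $h_{1/\kappa^*}(2^\nu)=2b_\nu$ ($\nu\ge1$) and $h_{1/\kappa^*}(1)=1$, is bounded.

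Next I would feed~\eqref{asymp_1_per_kappa_star} into the convolution identity~\eqref{sum}. Writing $c_1=A\zeta(3/2)/\zeta(3)$ and $c_2=B\zeta(2/3)/\zeta(2)$,
\[
\sum_{n\le x}(-1)^{n-1}\frac1{\kappa^*(n)}=\sum_{d\le x}h_{1/\kappa^*}(d)\Bigl(c_1(x/d)^{1/2}+c_2(x/d)^{1/3}+O\bigl((x/d)^{1/5}\bigr)\Bigr).
\]
Because $h_{1/\kappa^*}$ is bounded and supported on powers of $2$, the sums $\sum_{d\le x}h_{1/\kappa^*}(d)d^{-1/2}$ and $\sum_{d\le x}h_{1/\kappa^*}(d)d^{-1/3}$ converge absolutely, their tails over $d>x$ being $\ll\sum_{2^\nu>x}2^{-\nu/2}\ll x^{-1/2}$ and $\ll x^{-1/3}$ respectively, while $\sum_{d\le x}|h_{1/\kappa^*}(d)|d^{-1/5}\ll1$. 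Thus the two main terms may be completed to full series at the cost of an $O(1)$ error, and the remainder contributes $O(x^{1/5})$, which yields
\[
\sum_{n\le x}(-1)^{n-1}\frac1{\kappa^*(n)}=c_1\Bigl(\sum_{\nu\ge0}\frac{h_{1/\kappa^*}(2^\nu)}{2^{\nu/2}}\Bigr)x^{1/2}+c_2\Bigl(\sum_{\nu\ge0}\frac{h_{1/\kappa^*}(2^\nu)}{2^{\nu/3}}\Bigr)x^{1/3}+O(x^{1/5}).
\]

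Finally, the two bracketed constants are evaluated via~\eqref{const} with $s=\tfrac12$ and $s=\tfrac13$; this is legitimate because $0<2^{-s}<1<\sqrt2$, so both $S_{1/\kappa^*}(2^{-s})$ and $\overline{S}_{1/\kappa^*}(2^{-s})$ converge, and it gives $\sum_{\nu\ge0}h_{1/\kappa^*}(2^\nu)2^{-\nu s}=2/S_{1/\kappa^*}(2^{-s})-1$. A short computation from the closed form of $S_{1/\kappa^*}$ produces
\[
\frac2{S_{1/\kappa^*}(2^{-1/2})}-1=\frac{9-12\sqrt2}{23},\qquad
\frac2{S_{1/\kappa^*}(2^{-1/3})}-1=\frac{2^{5/3}-3\cdot2^{1/3}-1}{2^{5/3}-2^{1/3}+1},
\]
and multiplying these by $c_1$ and $c_2$ respectively gives exactly the coefficients $A^*\zeta(3/2)/\zeta(3)$ and $B^*\zeta(2/3)/\zeta(2)$ of~\eqref{altern_1/kappa_star}, with $A^*,B^*$ as in~\eqref{Astar_Bstar}. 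There is no serious obstacle here: the one step needing attention is locating the poles of $\overline{S}_{1/\kappa^*}$ to confirm $\overline{r}_{1/\kappa^*}>1$, which simultaneously licenses the use of~\eqref{const} at $s=\tfrac12,\tfrac13$ and secures the tail estimates; everything else is bookkeeping on convergent geometric series.
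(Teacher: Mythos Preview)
Your proposal is correct and follows essentially the same route as the paper: compute $S_{1/\kappa^*}(x)=\dfrac{x^2-x+2}{2(1-x)}$, invert it, locate the poles of the reciprocal at $\tfrac12(1\pm i\sqrt7)$ to get radius of convergence $\sqrt2$, and then run the convolution~\eqref{sum} against~\eqref{asymp_1_per_kappa_star}, evaluating the constants via~\eqref{const} at $s=\tfrac12,\tfrac13$. The only cosmetic difference is that the paper writes out a closed form for $b_\nu$ and deduces the explicit bound $|b_\nu|\le\tfrac{4}{\sqrt7}\,2^{-\nu/2}$, whereas you argue more softly that the radius of convergence exceeds $1$ so $(b_\nu)$ is bounded; since boundedness is all the error analysis needs, this is a harmless shortcut.
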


\begin{proof} We have
\begin{equation*}
S_{1/\kappa^*}(x)= \frac{x^2-x+2}{2(1-x)}  \quad (|x|<1),
\end{equation*}
hence
\begin{equation*}
\overline{S}_{1/\kappa^*}(x)= \frac{2(1-x)}{x^2-x+2}=\sum_{\nu=0}^{\infty} b_{\nu}x^\nu  \quad (|x|<\sqrt{2}),
\end{equation*}
where
\begin{equation*}
b_{\nu}= \frac1{4^{\nu+1}\sqrt{7}} \RE \left((\sqrt{7}+i)(1-i\sqrt{7})^{\nu+1}+ (\sqrt{7}-i)(1+i\sqrt{7})^{\nu+1} \right) \quad (\nu \ge 0).
\end{equation*}

Therefore,
\begin{equation*}
|b_{\nu}| \le \frac{4}{\sqrt{7}}\cdot \frac1{2^{\nu/2}} \quad (\nu \ge 1)
\end{equation*}
and using our method this implies \eqref{altern_1/kappa_star}.
\end{proof}

\subsection{Powerful part of a number}

It is possible to deduce similar formulas for many other special multiplicative functions. We consider the following further example.
Every positive integer $n$ can be uniquely written as $n = ab$, where $\gcd(a, b) = 1$, $a$ is squarefree and $b$ is
squareful. Here $b$ is called the {\it powerful part\/} of $n$ and is denoted by $\pow(n)$. See Cloutier, De Koninck, Doyon \cite{CKD2014}. Note that
\begin{equation} \label{kappastar_rel}
\pow(n)=\frac{n}{\kappa^*(n)} \quad (n\ge 1),
\end{equation}
where $\kappa^*(n)$ is the unitary squarefree kernel of $n$, discussed in Section \ref{Sect_Unitary squarefree kernel}.

By partial summation we deduce from \eqref{asymp_1_per_kappa_star} that
\begin{equation} \label{asymp_pow}
\sum_{n\le x} \pow(n)= \frac1{3} c_1 x^{3/2} + \frac1{4} c_2 x^{4/3}+ O(x^{6/5}),
\end{equation}
where
\begin{equation*}
c_1= \prod_{p\in \P} \left(1+\frac{2}{p^{3/2}}-\frac1{p^{5/2}} \right),
\end{equation*}
\begin{equation*}
c_2= \prod_{p\in \P} \left(1+\frac1{p^{2/3}}+\frac{2}{p^{4/3}}-\frac1{p^{7/3}} \right).
\end{equation*}

We remark that \eqref{asymp_pow} is better than \cite[Eq.\ (1)]{CKD2014},
where the error term is $O(x^{4/3})$.

\begin{theorem}
\begin{equation*}
\sum_{n\le x} (-1)^{n-1} \pow(n)= \frac{A^*\zeta(3/2)}{3\zeta(3)} x^{3/2} + \frac{B^*\zeta(4/3)}{4\zeta(2)}x^{4/3}
+ O(x^{6/5}),
\end{equation*}
where the constants $A^*$ and $B^*$ are defined by \eqref{Astar_Bstar}.
\end{theorem}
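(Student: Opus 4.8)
The plan is to reduce the claim to the already established alternating asymptotics \eqref{altern_1/kappa_star} for $\sum_{n\le x}(-1)^{n-1}/\kappa^*(n)$, via the identity \eqref{kappastar_rel}, $\pow(n)=n/\kappa^*(n)$. This gives $(-1)^{n-1}\pow(n)=n\cdot\dfrac{(-1)^{n-1}}{\kappa^*(n)}$, so the sum in question is a partial sum of the coefficients $a_n:=\dfrac{(-1)^{n-1}}{\kappa^*(n)}$ weighted by $f(t)=t$. This is exactly the mechanism by which \eqref{asymp_pow} was deduced from \eqref{asymp_1_per_kappa_star}, and I would simply rerun that computation with \eqref{altern_1/kappa_star} in place of \eqref{asymp_1_per_kappa_star}.

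Concretely, put $A(x):=\sum_{n\le x}a_n$, so that by \eqref{altern_1/kappa_star},
\begin{equation*}
A(x)=\frac{A^*\zeta(3/2)}{\zeta(3)}\,x^{1/2}+\frac{B^*\zeta(2/3)}{\zeta(2)}\,x^{1/3}+O\bigl(x^{1/5}\bigr),
\end{equation*}
where $A^*,B^*$ are the constants of \eqref{Astar_Bstar}. Abel summation with the weight $f(t)=t$ then gives
\begin{equation*}
\sum_{n\le x}(-1)^{n-1}\pow(n)=\sum_{n\le x}n\,a_n=x\,A(x)-\int_1^x A(t)\,dt+O\bigl(x^{1/2}\bigr).
\end{equation*}
Substituting the expansion of $A$ and integrating term by term, using $\int_1^x t^{1/2}\,dt=\tfrac23x^{3/2}+O(1)$, $\int_1^x t^{1/3}\,dt=\tfrac34x^{4/3}+O(1)$, and $\int_1^x O(t^{1/5})\,dt=O(x^{6/5})$, and absorbing all constants from the lower limit into the error term, the coefficient of $x^{3/2}$ is multiplied by $1-\tfrac23=\tfrac13$ and that of $x^{4/3}$ by $1-\tfrac34=\tfrac14$. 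This yields
\begin{equation*}
\sum_{n\le x}(-1)^{n-1}\pow(n)=\frac{A^*\zeta(3/2)}{3\zeta(3)}\,x^{3/2}+\frac{B^*\zeta(2/3)}{4\zeta(2)}\,x^{4/3}+O\bigl(x^{6/5}\bigr),
\end{equation*}
which is the asserted formula; note that this is \eqref{asymp_pow} with $A^*,B^*$ in place of $A,B$, since $c_1=A\zeta(3/2)/\zeta(3)$ and $c_2=B\zeta(2/3)/\zeta(2)$ (a short Euler-product identity, the same one used to pass from \eqref{asymp_1_per_kappa_star} to \eqref{asymp_pow}).

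I do not expect a genuine obstacle here: this is a routine partial summation fully parallel to the derivation of \eqref{asymp_pow}, and all the analytic depth is already contained in \eqref{altern_1/kappa_star}. The only points needing a line of care are that the error term $O(x^{1/5})$ in \eqref{altern_1/kappa_star} propagates to $O(x^{6/5})$ under both operations $x\cdot(\,\cdot\,)$ and $\int_1^x(\,\cdot\,)\,dt$, that the secondary term $\{x\}A(x)=O(x^{1/2})$ arising from $\lfloor x\rfloor$ versus $x$ is absorbed, and that $6/5<4/3$ so both main terms genuinely dominate the remainder; the constants $A^*,B^*$ are not altered by the summation.
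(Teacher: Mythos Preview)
Your approach is exactly the one the paper uses: it too invokes \eqref{kappastar_rel} and \eqref{altern_1/kappa_star} and applies partial summation with weight $t$, parallel to the passage from \eqref{asymp_1_per_kappa_star} to \eqref{asymp_pow}. One small point: your computation (correctly) produces $\dfrac{B^*\zeta(2/3)}{4\zeta(2)}x^{4/3}$, whereas the stated theorem has $\zeta(4/3)$; since partial summation cannot change the $\zeta(2/3)$ inherited from \eqref{altern_1/kappa_star}, the $\zeta(4/3)$ in the displayed statement is evidently a misprint, and your derived formula is the intended one.
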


\begin{proof} Use formulas \eqref{kappastar_rel}, \eqref{altern_1/kappa_star} and partial summation. Alternatively, formula \eqref{asymp_pow}
and the method of the present paper can be applied.
\end{proof}

By partial summation again, we deduce from \eqref{kappastar_rel} and \eqref{kappastar} that
\begin{equation} \label{HH}
\sum_{n\le x} \frac1{\pow(n)}= \widetilde{C} x + O(R_{1/\pow}(x))
\end{equation}
where $\widetilde{C}$ is defined by \eqref{tilde_C}, $R_{1/\pow}(x)= x^{1/2}\delta(x)$, with $\delta(x)$ defined by \eqref{delta},
unconditionally, and $R_{1/\pow}(x)= x^{2/5}\omega(x)$, with $\omega(x)$ defined by \eqref{omega}, assuming RH. Note that this error term is better than $O(x^{1/2})$, indicated in \cite[Eq.\ (3)]{CKD2014}.

\begin{theorem}
\begin{equation*}
\sum_{n\le x} (-1)^{n-1} \frac1{\pow(n)}= \frac{5}{19} \widetilde{C} x + O(R_{1/\pow}(x)),
\end{equation*}
with the notation above.
\end{theorem}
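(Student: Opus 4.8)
The plan is to run the convolution method of Section~\ref{Subsect_Method} directly, exactly as in the proof of Theorem~\ref{Th_kappa}; note that Proposition~\ref{Prop_1_per_f} does \emph{not} apply, since the main term of $\sum_{n\le x}1/\pow(n)$ in~\eqref{HH} is linear in $x$ rather than logarithmic. The first step is to record the Bell series of $1/\pow$ at the prime $2$. By~\eqref{kappastar_rel} one has $1/\pow(2^\nu)=\kappa^*(2^\nu)/2^\nu$, which equals $1$ for $\nu=0,1$ and equals $2^{-\nu}$ for $\nu\ge 2$; hence
\[
S_{1/\pow}(x)=1+x+\sum_{\nu\ge 2}\frac{x^\nu}{2^\nu}=1+x+\frac{x^2/4}{1-x/2}=\frac{4+2x-x^2}{4-2x}\qquad(|x|<2).
\]

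Next I would analyse the reciprocal power series $\overline S_{1/\pow}(x)=\frac{4-2x}{4+2x-x^2}=\sum_{\nu\ge 0}b_\nu x^\nu$. Its only poles are the zeros $1\pm\sqrt5$ of $x^2-2x-4$, the one nearest the origin being $1-\sqrt5$, of modulus $\sqrt5-1>1$; therefore $b_\nu\ll M^\nu$ with $M=(\sqrt5-1)^{-1}=(\sqrt5+1)/4<1$ (a closed form for $b_\nu$ via partial fractions is also available, but not needed). Consequently the multiplicative function $h_{1/\pow}$ of~\eqref{convo} is supported on powers of $2$ with $h_{1/\pow}(2^\nu)=2b_\nu\ll M^\nu$, so that $\sum_{n\ge 1}|h_{1/\pow}(n)|<\infty$ and the radii of convergence of both $S_{1/\pow}$ and $\overline S_{1/\pow}$ exceed $1/2$ (indeed exceed $1$).

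Then, substituting~\eqref{HH} into the convolution identity~\eqref{sum},
\[
\sum_{n\le x}(-1)^{n-1}\frac1{\pow(n)}=\sum_{d\le x}h_{1/\pow}(d)\left(\widetilde C\,\frac{x}{d}+O\big(R_{1/\pow}(x/d)\big)\right)=\widetilde C\,x\sum_{d\le x}\frac{h_{1/\pow}(d)}{d}+O\!\left(\sum_{d\le x}|h_{1/\pow}(d)|\,R_{1/\pow}(x/d)\right).
\]
For the main term I would complete the Dirichlet series: the tail is $\ll x\sum_{2^\nu>x}(M/2)^\nu\ll x^{1+\log_2(M/2)}$, a negative power of $x$ and hence $o(R_{1/\pow}(x))$, while by~\eqref{const} at $s=1$ (whose hypotheses hold since $1/2$ lies inside both radii of convergence)
\[
\sum_{n\ge 1}\frac{h_{1/\pow}(n)}{n}=\frac{2}{S_{1/\pow}(1/2)}-1=\frac{2}{19/12}-1=\frac{5}{19},
\]
because $S_{1/\pow}(1/2)=1+\tfrac12+\tfrac1{12}=\tfrac{19}{12}$. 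This produces the main term $\tfrac{5}{19}\widetilde C\,x$.

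It remains to bound the error, and this is where the sparse, geometrically decaying support of $h_{1/\pow}$ pays off. Since $R_{1/\pow}$ is eventually nondecreasing (for either the unconditional choice $x^{1/2}\delta(x)$ or the RH choice $x^{2/5}\omega(x)$), one has $R_{1/\pow}(x/d)\ll R_{1/\pow}(x)$ for $x$ large and $d\ge 1$, whence
\[
\sum_{d\le x}|h_{1/\pow}(d)|\,R_{1/\pow}(x/d)\ll R_{1/\pow}(x)\sum_{n\ge 1}|h_{1/\pow}(n)|\ll R_{1/\pow}(x).
\]
This gives the stated formula, unconditionally and under RH alike. I anticipate no genuine obstacle; the one point that really must be verified is that $\overline S_{1/\pow}$ has radius of convergence greater than $1$, since the resulting absolute summability of $h_{1/\pow}$ is precisely what both pins down the constant $5/19$ through~\eqref{const} and trivialises the error estimate. (As an aside, the same result drops out of Abel summation applied to~\eqref{altern_kappastar} via~\eqref{kappastar_rel}, the two contributions $\tfrac{5}{38}+\tfrac{5}{38}$ again summing to $\tfrac{5}{19}$.)
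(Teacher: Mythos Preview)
Your argument is correct and complete. The paper's own proof is a one-line remark offering two routes: partial summation from \eqref{altern_kappastar} via \eqref{kappastar_rel} as the primary method, and the convolution method applied to \eqref{HH} as the alternative. You have carried out the second of these in full (computing the Bell series, locating the poles of its reciprocal, and extracting the constant $5/19$ from \eqref{const} at $s=1$), while noting the first as an aside at the end; so your write-up and the paper's proof agree in substance, with the emphasis on the two methods simply reversed.
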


\begin{proof} Apply formulas \eqref{kappastar_rel}, \eqref{altern_kappastar} and partial summation. Alternatively, formula \eqref{HH}
and the method of the present paper can be used.
\end{proof}

\subsection{Sum-of-bi-unitary-divisors function} \label{Sect_sigma**}

Let $\sigma^{**}(n)$ denote, as usual, the sum of bi-unitary divisors of $n$.
Recall that a divisor $d$ of $n$ is a bi-unitary divisor if the greatest common unitary divisor of $d$ and $n/d$ is $1$.
The function $\sigma^{**}$ is multiplicative and for any prime power $p^\nu$ ($\nu \ge 1$),
\begin{equation*}
\sigma^{**}(p^\nu)=\begin{cases} \sigma(p^\nu), & \text{ if $\nu$ is
odd};\\ \sigma(p^\nu)-p^{\nu/2}, & \text{ if $\nu$ is even}.
\end{cases}
\end{equation*}

It is the result of Suryanarayana and Subbarao \cite[Cor. 3.4.3]{SurSub1980} that
\begin{equation*}
\sum_{n\le x} \sigma^{**}(n)= \frac1{2} C^{**} x^2 + O(x(\log x)^3),
\end{equation*}
where
\begin{equation*}
C^{**}= \zeta(2) \zeta(3) \prod_{p\in \P}
\left(1-\frac{2}{p^3}+\frac1{p^4}+\frac1{p^5}-\frac1{p^6} \right).
\end{equation*}

\begin{theorem} We have
\begin{equation*}
\sum_{n\le x} (-1)^{n-1} \sigma^{**}(n)= -\frac{11}{106} C^{**} x^2+ O(x(\log x)^3).
\end{equation*}
\end{theorem}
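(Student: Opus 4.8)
The plan is to sidestep Proposition~\ref{Prop_f}: for $f=\sigma^{**}$ the numerator $1-4x^2+6x^3-4x^4$ of the Bell series $S_{\sigma^{**}}$ has a root of modulus $<\tfrac12$, so the coefficients $b_\nu$ of $\overline{S}_{\sigma^{**}}$ grow geometrically with ratio $>2$, condition~(iii) fails, and the plain convolution $(-1)^{n-1}\sigma^{**}=h_{\sigma^{**}}*\sigma^{**}$ only controls the error down to a power $x^{1+c}$ with $c>0$ (the main term still comes out right). Instead I would convolve against $\sigma$ and recycle the alternating sum of $\sigma$ from \eqref{sum_sigma}.

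The first step is the Euler factorisation $\sum_{n\ge1}\sigma^{**}(n)n^{-s}=\zeta(s)\zeta(s-1)\,G(s)$ with
\[
G(s)=\prod_p\left(1-\frac{(1-p^{-s})(1-p^{1-s})\,p^{1-2s}}{1-p^{1-2s}}\right),
\]
equivalently $\sigma^{**}=g*\sigma$ where $g$ is multiplicative, $g(p)=0$ and $g(p^2)=-p$ for all $p$, and more generally $g(p^{2k})=-2p^{k}$ for $k\ge2$, $g(p^{2k+1})=p^{k}+p^{k+1}$ for $k\ge1$, so that $|g(p^\nu)|\le 2p^{\lceil\nu/2\rceil}$. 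Thus $g$ is supported on squareful integers, $G(s)$ converges absolutely for $\Re s>1$, and by a standard Euler product estimate $\sum_{d\le x}|g(d)|/d\ll(\log x)^2$ and $\sum_{d>x}|g(d)|/d^2\ll x^{-1}\log x$. Since $C^{**}$ is the residue of the left-hand side at $s=2$, we have $C^{**}=\zeta(2)\,G(2)$.

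Next, because $(-1)^{de-1}=(-1)^{e-1}$ for odd $d$ and $(-1)^{de-1}=-1$ for even $d$, the convolution $\sigma^{**}=g*\sigma$ gives
\[
\sum_{n\le x}(-1)^{n-1}\sigma^{**}(n)=\sum_{\substack{d\le x\\ d\text{ odd}}}g(d)\sum_{e\le x/d}(-1)^{e-1}\sigma(e)\;-\;\sum_{\substack{d\le x\\ d\text{ even}}}g(d)\sum_{e\le x/d}\sigma(e).
\]
Insert $\sum_{e\le y}\sigma(e)=\tfrac{\pi^2}{12}y^2+O(y(\log y)^{2/3})$ (Walfisz) and $\sum_{e\le y}(-1)^{e-1}\sigma(e)=-\tfrac{\pi^2}{48}y^2+O(y(\log y)^{2/3})$ from \eqref{sum_sigma}. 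The error terms contribute $\ll x(\log x)^{2/3}\sum_{d\le x}|g(d)|/d\ll x(\log x)^{8/3}$, while completing the two main sums to all odd, resp.\ even, $d$ costs $\ll x^2\sum_{d>x}|g(d)|/d^2\ll x\log x$. Hence
\[
\sum_{n\le x}(-1)^{n-1}\sigma^{**}(n)=\left(-\frac{\pi^2}{48}\sum_{\substack{d\ge1\\ d\text{ odd}}}\frac{g(d)}{d^2}-\frac{\pi^2}{12}\sum_{\substack{d\ge1\\ d\text{ even}}}\frac{g(d)}{d^2}\right)x^2+O\!\left(x(\log x)^{8/3}\right).
\]

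Finally I would compute the constant. Splitting $\sum_{\nu\ge0}\sigma^{**}(2^\nu)x^\nu$ according to the parity of $\nu$ and summing the geometric series gives $S_{\sigma^{**}}(x)=\frac{1}{(1-2x)(1-x)}-\frac{2x^2}{1-2x^2}=\frac{1-4x^2+6x^3-4x^4}{(1-2x)(1-x)(1-2x^2)}$, whence $S_{\sigma^{**}}(1/4)=\tfrac{53}{21}$. The $p=2$ Euler factor of $G$ is $G_2(s)=S_{\sigma^{**}}(2^{-s})(1-2^{-s})(1-2^{1-s})$, so $G_2(2)=\tfrac{53}{21}\cdot\tfrac34\cdot\tfrac12=\tfrac{53}{56}$. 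Since $\sum_{d\ge1}g(d)d^{-2}=G(2)$, the odd part is $\sum_{d\text{ odd}}g(d)d^{-2}=G(2)/G_2(2)=\tfrac{56}{53}G(2)$ and the even part is $G(2)-\tfrac{56}{53}G(2)=-\tfrac{3}{53}G(2)$. Substituting into the bracket and using $C^{**}=\zeta(2)G(2)=\tfrac{\pi^2}{6}G(2)$,
\[
-\frac{\pi^2}{48}\cdot\frac{56}{53}G(2)-\frac{\pi^2}{12}\cdot\left(-\frac{3}{53}\right)G(2)=\frac{\pi^2 G(2)}{53}\left(-\frac{56}{48}+\frac{12}{48}\right)=-\frac{11\pi^2}{636}G(2)=-\frac{11}{106}C^{**},
\]
and since $x(\log x)^{8/3}\ll x(\log x)^{3}$ the theorem follows. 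The part I expect to need real work is establishing the factorisation $\sum\sigma^{**}(n)n^{-s}=\zeta(s)\zeta(s-1)G(s)$ together with the concomitant fact that $g$ is supported on squareful numbers with $|g(p^\nu)|\ll_\nu p^{\lceil\nu/2\rceil}$; once that is in place everything reduces to partial summation and the already-proved \eqref{sum_sigma}.
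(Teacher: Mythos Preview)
Your diagnosis is correct and your workaround is sound. The Bell series at $p=2$ is
\[
S_{\sigma^{**}}(x)=\frac{1-4x^2+6x^3-4x^4}{(1-2x)(1-x)(1-2x^2)},
\]
and the quartic $1-4x^2+6x^3-4x^4$ has a real zero at $x\approx-0.382$, hence a zero of $S_{\sigma^{**}}$ with modulus $<\tfrac12$. Consequently the coefficients of $\overline{S}_{\sigma^{**}}$ grow like $c\,(0.382)^{-\nu}$, so $(b_\nu)$ is unbounded and even $\sum_\nu |b_\nu|2^{-\nu}$ diverges. Thus condition~(iii) of Proposition~\ref{Prop_f} fails, and in fact the key step in its proof, the boundedness of $\sum_{d\le x}|h_f(d)|/d$, fails too. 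The paper's one-line invocation of Proposition~\ref{Prop_f} therefore does not go through as written; your objection is on point.

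Your alternative route via the factorisation $\sigma^{**}=g*\sigma$ with $g$ supported on squareful integers is a genuine fix: the explicit values $g(p)=0$, $g(p^2)=-p$, $g(p^{2k})=-2p^{k}$, $g(p^{2k+1})=p^{k}+p^{k+1}$ are correct, $G(s)$ is absolutely convergent for $\Re s>1$, and the parity splitting that converts the convolution into a combination of $\sum(-1)^{e-1}\sigma(e)$ and $\sum\sigma(e)$ is exactly right. Your constant calculation checks out line by line: $S_{\sigma^{**}}(1/4)=53/21$, $G_2(2)=53/56$, and the final $-\tfrac{11}{106}C^{**}$ is correct. One small slip: the tail bound should read $\sum_{d>x}|g(d)|/d^2\ll x^{-1}(\log x)^2$ rather than $x^{-1}\log x$ (partial summation from $\sum_{d\le t}|g(d)|/d\ll(\log t)^2$ gives the square), but this still lands safely inside $O(x(\log x)^3)$.

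In short, the paper's stated approach is to apply Proposition~\ref{Prop_f} directly; you correctly note that this is not legitimate here, and you supply a valid alternative by pushing the alternating sign through the decomposition $\sigma^{**}=g*\sigma$ and recycling the already-proved estimate~\eqref{sum_sigma}. Your proof is complete as outlined; the ``real work'' you anticipate for the factorisation is essentially what you already wrote down.
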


\begin{proof} Similar to the proof of \eqref{sum_sigma}, by applying Proposition \ref{Prop_f} for $f=\sigma^{**}$.
\end{proof}

Sitaramaiah and Subbarao \cite[Thm.\ 3.2]{SitSub1991} established that
\begin{equation*}
\sum_{n\le x} \frac1{\sigma^{**}(n)}=A^{**}\log x + B^{**} + O(x^{-1}(\log
x)^{14/3}(\log \log x)^{4/3}),
\end{equation*}
where $A^{**}, B^{**}$ are certain explicit constants.

\begin{theorem} We have
\begin{equation} \label{1_per_sigma_star_star}
\sum_{n\le x} (-1)^{n-1}\frac1{\sigma^{**}(n)}=A_1^{**}\log x +B_1^{**}+
O(x^{c}(\log x)^{14/3}(\log \log x)^{4/3}),
\end{equation}
where $A_1^{**}, B_1^{**}$ are explicit constants and $c=(\log 9/10)/(\log 2)\doteq -0.152003$.
\end{theorem}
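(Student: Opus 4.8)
The plan is to apply Corollary~\ref{Cor_explicit} with $f=\sigma^{**}$; the whole proof reduces to verifying its two hypotheses. Hypothesis~(i) is supplied directly by the asymptotic formula of Sitaramaiah and Subbarao: rewriting it in the shape \eqref{asympt_D_f_E_f}, we have $D_f=A^{**}$, $D_fE_f=B^{**}$, and $R_{1/f}(x)=(\log x)^{14/3}(\log\log x)^{4/3}$, which is nondecreasing, satisfies $1\ll R_{1/f}(x)$ and $R_{1/f}(x)=o(x)$.

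For hypothesis~(ii) I would first record the explicit values
\begin{equation*}
\sigma^{**}(2^\nu)=\begin{cases} 2^{\nu+1}-1, & \nu \text{ odd};\\ 2^{\nu+1}-1-2^{\nu/2}, & \nu \text{ even},\end{cases}
\end{equation*}
and then establish the uniform lower bound $\sigma^{**}(2^\nu)\ge \tfrac54\,2^\nu$ for every $\nu\ge 1$: for odd $\nu$ this reduces to $\tfrac34\,2^\nu\ge 1$, while for even $\nu$ it reduces to $\tfrac34\,2^\nu\ge 1+2^{\nu/2}$, which holds with equality at $\nu=2$ and comfortably for even $\nu\ge 4$. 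Consequently $1/\sigma^{**}(2^\nu)\le Aq^\nu$ with $A=\tfrac45$ and $q=\tfrac12$, so that $M:=q(A+1)=\tfrac9{10}<1$ and Corollary~\ref{Cor_explicit} applies; the radius of convergence of $S_{1/\sigma^{**}}(x)$ is then $\ge 2>1$, as needed. (One checks that $\max_{\nu\ge 1}2^\nu/\sigma^{**}(2^\nu)$ is attained at $\nu=2$, which is why $A=\tfrac45$ is the smallest admissible constant for $q=\tfrac12$, and hence why this method cannot push the error exponent below $c=\log(9/10)/\log 2$.)

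With the hypotheses verified, Proposition~\ref{Prop_1_per_f} (through Corollary~\ref{Cor_explicit}) yields \eqref{asympt_altern_1_per_f}. Since $M=\tfrac9{10}\in(\tfrac12,1)$, the third branch of \eqref{error_M} gives error term $x^{\log M/\log 2}\max(\log x,R_{1/f}(x))=x^{c}(\log x)^{14/3}(\log\log x)^{4/3}$ with $c=\log(9/10)/\log 2$, because $R_{1/f}(x)\gg\log x$; this is exactly the claimed error term. The main term of \eqref{asympt_altern_1_per_f} then identifies the constants,
\begin{equation*}
A_1^{**}=A^{**}\Bigl(\frac{2}{S_{1/\sigma^{**}}(1)}-1\Bigr),\qquad
B_1^{**}=A_1^{**}E^{**}+2A^{**}(\log 2)\,\frac{S'_{1/\sigma^{**}}(1)}{S_{1/\sigma^{**}}(1)^2},
\end{equation*}
where $E^{**}=B^{**}/A^{**}$ and $S_{1/\sigma^{**}}(x)=\sum_{\nu\ge 0}x^\nu/\sigma^{**}(2^\nu)$, this series and its derivative being evaluated at $x=1$ (both converge since $1/\sigma^{**}(2^\nu)\le 2^{-\nu}$), so that $A_1^{**},B_1^{**}$ are indeed explicit. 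The only step requiring genuine care is the parity-sensitive inequality $\sigma^{**}(2^\nu)\ge\tfrac54\,2^\nu$, in particular the tight case $\nu=2$; everything else is a direct appeal to the machinery already developed in Sections~\ref{Subsect_Method} and~\ref{Estimates}.
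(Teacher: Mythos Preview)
Your proof is correct and follows exactly the same route as the paper: apply Corollary~\ref{Cor_explicit} with $A=4/5$, $q=1/2$, $M=9/10$, after verifying $1/\sigma^{**}(2^\nu)\le \tfrac45\cdot 2^{-\nu}$ for $\nu\ge 1$. You supply more detail than the paper does (the paper simply asserts the inequality is ``easy to check'' and does not spell out the constants $A_1^{**},B_1^{**}$), including the observation that $\nu=2$ is the tight case, but the argument is identical in substance.
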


\begin{proof} Now Lemma \ref{lemma_Kaluza} (theorem of Kaluza) cannot be used, since the sequence
$\left(\frac1{\sigma^{**}(2^{\nu})}\right)_{\nu \ge 0}$ is not log-convex. But it is easy to check that
\begin{equation*}
\frac1{\sigma^{**}(2^\nu)}\le \frac{4}{5}\cdot \frac1{2^\nu} \quad (\nu \ge 1),
\end{equation*}
hence Corollary \ref{Cor_explicit} can be applied with $A=4/5$, $q=1/2$, where $M=q(A+1)=9/10<1$.
\end{proof}

\begin{open}  Improve the error term of \eqref{1_per_sigma_star_star}.
\end{open}

\subsection{Alternating sum-of-divisors function}

Consider the function $\beta(n)=\sum_{d\mid n} d\, \lambda(n/d)$
($n\ge 1$), where $\lambda$ is the Liouville function. The function
$\beta$ is multiplicative and $\beta(p^\nu)= p^\nu - p^{\nu-1}
+p^{\nu-2}-\cdots +(-1)^{\nu}$ for every prime power $p^\nu$ ($\nu
\ge 1$). See the survey paper of the author \cite{Tot2013Sap}.

\begin{proposition}
\begin{equation} \label{D_altern_beta}
\sum_{n=1}^{\infty} (-1)^{n-1} \frac{\beta(n)}{n^s}  =
\left(1-\frac{2}{2^s}-\frac{4}{2^{2s}}\right)
\frac{\zeta(s-1)\zeta(2s)}{\zeta(s)} \quad (\Re s>2).
\end{equation}
\end{proposition}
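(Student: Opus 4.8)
The plan is to mirror the derivation of \eqref{D_altern_varphi} and \eqref{D_altern_psi}: first put $D(\beta,s)$ in closed form, then compute the Bell series of $\beta$ at the prime $2$, and finally substitute into Proposition~\ref{prop_1}. Since $\beta(n)=\sum_{d\mid n} d\,\lambda(n/d)$ with $\lambda$ the Liouville function, $\beta$ is the Dirichlet convolution of the function $n\mapsto n$ with $\lambda$. Using $\sum_{n\ge 1} n\cdot n^{-s} = \zeta(s-1)$ (for $\Re s>2$) and the familiar $\sum_{n\ge 1}\lambda(n)n^{-s} = \zeta(2s)/\zeta(s)$ (for $\Re s>1$), multiplicativity of Dirichlet series gives
\[
D(\beta,s) = \frac{\zeta(s-1)\zeta(2s)}{\zeta(s)} \qquad (\Re s>2),
\]
which is the non-alternating factor appearing on the right of \eqref{D_altern_beta}.

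Next I would compute the local factor at $p=2$. Because the Bell series of a Dirichlet convolution is the product of the Bell series of the factors, for every prime $p$ one has
\[
\sum_{\nu=0}^{\infty}\frac{\beta(p^\nu)}{p^{\nu s}}
= \left(\sum_{\nu=0}^{\infty}\frac{p^{\nu}}{p^{\nu s}}\right)\left(\sum_{\nu=0}^{\infty}\frac{\lambda(p^\nu)}{p^{\nu s}}\right)
= \left(1-\frac1{p^{s-1}}\right)^{-1}\left(1+\frac1{p^s}\right)^{-1},
\]
using $\lambda(p^\nu)=(-1)^\nu$. (This is consistent with the above $D(\beta,s)$ via the Euler product, and can also be checked directly from $\beta(p^\nu)=p^\nu-p^{\nu-1}+\cdots+(-1)^\nu$.) Writing $y=1/2^s$, the quantity $S_\beta(1/2^s):=\sum_{\nu\ge 0}\beta(2^\nu)2^{-\nu s}$ thus equals $\bigl((1-2y)(1+y)\bigr)^{-1}$.

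Finally I would invoke Proposition~\ref{prop_1}, according to which the left-hand side of \eqref{D_altern_beta} equals $D(\beta,s)\bigl(2\,S_\beta(1/2^s)^{-1}-1\bigr)$; a short expansion gives
\[
\frac{2}{S_\beta(1/2^s)}-1 = 2(1-2y)(1+y)-1 = 1-2y-4y^2 = 1-\frac{2}{2^s}-\frac{4}{2^{2s}},
\]
which is exactly the claimed factor. Combining the last display with the formula for $D(\beta,s)$ yields \eqref{D_altern_beta}. There is essentially no obstacle here beyond bookkeeping; the only point deserving a word is the range of validity, where $\Re s>2$ guarantees absolute convergence of $\sum_{n}n\cdot n^{-s}$ — hence of $D(\beta,s)$ and of all the Euler-product rearrangements — so that Proposition~\ref{prop_1} applies in its non-formal (convergent) form.
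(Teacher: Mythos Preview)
Your proof is correct and follows exactly the paper's approach: apply Proposition~\ref{prop_1} together with the known Dirichlet series $D(\beta,s)=\zeta(s-1)\zeta(2s)/\zeta(s)$. You supply more detail than the paper does (deriving $D(\beta,s)$ from the convolution $\beta=\operatorname{id}*\lambda$ and computing the Bell series at $p=2$ explicitly), but the argument is the same.
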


\begin{proof} Use Proposition
\ref{prop_1} and the representation
\begin{equation*}
\sum_{n=1}^{\infty} \frac{\beta(n)}{n^s}  =
\frac{\zeta(s-1)\zeta(2s)}{\zeta(s)} \quad (\Re s>2).
\end{equation*}
\end{proof}

\begin{theorem}
\begin{equation} \label{sum_beta}
\sum_{n\le x} (-1)^{n-1} \beta(n) = \frac{\pi^2}{120} x^2 + O\left(
x (\log x)^{2/3} (\log \log x)^{4/3} \right).
\end{equation}
\end{theorem}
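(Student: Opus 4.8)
The plan is to apply Proposition~\ref{Prop_f} with $f=\beta$, exactly as in the proofs of \eqref{sum_varphi} and \eqref{sum_psi}, so that the work reduces to verifying its three hypotheses.

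For hypothesis (i) I would first record that $\beta=\varphi * q$, where $q$ is the characteristic function of the perfect squares --- equivalently $\beta(n)=\sum_{d^2\mid n}\varphi(n/d^2)$ --- which is immediate from the factorization $\zeta(s-1)\zeta(2s)/\zeta(s)=(\zeta(s-1)/\zeta(s))\cdot\zeta(2s)$ underlying \eqref{D_altern_beta}. Summing over $n\le x$, splitting by $d\le\sqrt x$, and inserting Walfisz's estimate $\sum_{m\le y}\varphi(m)=\frac{3}{\pi^2}y^2+O(y(\log y)^{2/3}(\log\log y)^{4/3})$ (already used above), the square-tail $\sum_{d>\sqrt x}d^{-4}$ contributes only $O(x^{1/2})$ and the accumulated error terms are $\ll x(\log x)^{2/3}(\log\log x)^{4/3}\sum_d d^{-2}$, so
\[
\sum_{n\le x}\beta(n)=\frac{3}{\pi^2}\zeta(4)\,x^2+O\!\left(x(\log x)^{2/3}(\log\log x)^{4/3}\right)=\frac{\pi^2}{30}x^2+O\!\left(x(\log x)^{2/3}(\log\log x)^{4/3}\right).
\]
This gives (i) with $C_\beta=\pi^2/30$ and $R_\beta(x)=(\log x)^{2/3}(\log\log x)^{4/3}$, which is nondecreasing and satisfies $1\ll R_\beta(x)=o(x)$.

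For (ii) and (iii) I would compute the Bell series of $\beta$ at $p=2$: from $\beta(2^\nu)=2^\nu-2^{\nu-1}+\cdots+(-1)^\nu=\frac{2^{\nu+1}+(-1)^\nu}{3}$ one gets, for $|x|<\tfrac12$,
\[
S_\beta(x)=\sum_{\nu\ge0}\beta(2^\nu)x^\nu=\frac13\left(\frac{2}{1-2x}+\frac{1}{1+x}\right)=\frac{1}{(1-2x)(1+x)},
\]
so $S_\beta(1/4)=8/5$ converges, giving (ii); and the reciprocal series is the polynomial $\overline{S}_\beta(x)=(1-2x)(1+x)=1-x-2x^2$, whose coefficients $b_0=1$, $b_1=-1$, $b_2=-2$, $b_\nu=0$ ($\nu\ge3$) form a bounded sequence, giving (iii). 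Proposition~\ref{Prop_f} then yields
\[
\sum_{n\le x}(-1)^{n-1}\beta(n)=C_\beta\left(\frac{2}{S_\beta(1/4)}-1\right)x^2+O\!\left(xR_\beta(x)\right)=\frac{\pi^2}{30}\cdot\frac14\,x^2+O\!\left(x(\log x)^{2/3}(\log\log x)^{4/3}\right)=\frac{\pi^2}{120}x^2+O\!\left(x(\log x)^{2/3}(\log\log x)^{4/3}\right),
\]
which is \eqref{sum_beta}. As a sanity check, the leading coefficient is also $\tfrac12$ times the residue of \eqref{D_altern_beta} at $s=2$, namely $\tfrac12\cdot\tfrac14\cdot\zeta(4)/\zeta(2)=\pi^2/120$.

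The only substantive point is obtaining the Walfisz-quality error term in the estimate for $\sum_{n\le x}\beta(n)$; once that input is in hand, everything else is the routine verification carried out above together with the mechanical application of Proposition~\ref{Prop_f}. I do not expect a genuine obstacle here --- the structure is identical to the $\varphi$ and $\psi$ cases, and the polynomial reciprocal series makes conditions (ii) and (iii) trivial.
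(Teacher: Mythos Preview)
Your proof is correct and follows essentially the same approach as the paper: apply Proposition~\ref{Prop_f} to $f=\beta$, using the Walfisz-quality estimate $\sum_{n\le x}\beta(n)=\frac{\pi^2}{30}x^2+O(x(\log x)^{2/3}(\log\log x)^{4/3})$ for (i) and the polynomial reciprocal Bell series for (ii)--(iii). The paper simply cites that estimate from \cite[Eq.~(15)]{Tot2013Sap} and reads the main-term constant off \eqref{D_altern_beta} at $s=2$, whereas you re-derive the estimate via $\beta=\varphi*q$ and compute $S_\beta(1/4)=8/5$ explicitly; these are harmless elaborations of the same argument.
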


\begin{proof} Apply Proposition \ref{Prop_f} for $f=\beta$. It is known that
\begin{equation*}
\sum_{n\le x} \beta(n) = \frac{\pi^2}{30} x^2 + O\left( x (\log
x)^{2/3} (\log \log x)^{4/3} \right),
\end{equation*}
see \cite[Eq.\ (15)]{Tot2013Sap}, which is a consequence of the
result of Walfisz \cite[Satz 4, p.\ 144]{Wal1963} for Euler's
$\varphi$ function. The coefficient of the main term in
\eqref{sum_beta} is from \eqref{D_altern_beta},
\begin{equation*}
\frac{\pi^2}{30} \left[1-\frac{2}{2^s}-
\frac{4}{2^{2s}}\right]_{s=2} = \frac{\pi^2}{120}.
\end{equation*}
\end{proof}

It is known (\cite[Eq.\ (17)]{Tot2013Sap}) that for every
$\varepsilon >0$,
\begin{equation} \label{asymp_1_per_beta}
\sum_{n\le x} \frac1{\beta(n)} = K_1\log x+
K_2+O(x^{-1+\varepsilon}),
\end{equation}
where $K_1$ and $K_2$ are constants. Since $\left(\beta(2^\nu)
\right)_{\nu \ge 0}$ is nondecreasing and $\beta(2^\nu) \ge
2^{\nu-1}$  ($\nu \ge 1$), Corollary \ref{Cor_1_per_f} can be
applied (take $q=1/2$). We deduce that
\begin{equation} \label{asymp_1_per_beta_altern}
\sum_{n\le x} (-1)^{n-1} \frac1{\beta(n)} = K_3\log x+ K_4+O(x^{-u})
\end{equation}
with some constants $K_3, K_4$ and some $u>0$.

\begin{open}  Improve the error terms of \eqref{asymp_1_per_beta} and \eqref{asymp_1_per_beta_altern}.
\end{open}

\subsection{Exponential divisor function}

The exponential divisor function $\tau^{(e)}$ is multiplicative and
$\tau^{(e)}(p^\nu)= \tau(\nu)$ for every prime power $p^\nu$ ($\nu
\ge 1$), where $\tau$ is the classical divisor function. There are constants $A_1$ and $A_2$ such that
\begin{equation*}
\sum_{n\le x} \tau^{(e)}(n)  = A_1 x + A_2 x^{1/2} + O(R_{\tau^{(e)}}(x)),
\end{equation*}
where
\begin{equation*}
A_1= \prod_{p\in \P} \left(1+ \sum_{\nu=2}^{\infty} \frac{\tau(\nu)-\tau(\nu-1)}{p^\nu}\right)
\end{equation*}
is the mean value of $\tau^{(e)}$ and $R_{\tau^{(e)}}(x)=x^{2/9}\log
x$, as shown by Wu \cite[Thm.\ 1]{Wu1995}. This error term is strongly
related to estimates on the divisor function $d(1,2;n)=\sum_{ab^2=n} 1$. It can be sharpened into $O(x^{1057/4785 +\varepsilon})$
by using \cite[Thm.\ 1]{GraKol1988}. Also see \cite[Remark, p.\ 135]{Wu1995}.

It follows from Proposition \ref{prop_mean} that the limit
\begin{equation*}
\lim_{x\to \infty} \frac1{x} \sum_{n\le x} (-1)^{n-1} \tau^{(e)}(n) =  A_1 \left(\frac{2}{1+K} -1 \right)
\end{equation*}
exists, where $K=\sum_{\nu=1}^{\infty}
\frac{\tau(\nu)}{2^\nu}=\sum_{\nu=1}^{\infty} \frac1{2^\nu-1}$ is
the Erd\H{o}s-Borwein constant, already quoted above.

\begin{open} Investigate the alternating sums
\begin{equation*}
\sum_{n\le x} (-1)^{n-1} \tau^{(e)}(n), \qquad \sum_{n\le x} (-1)^{n-1} \frac1{\tau^{(e)}(n)}.
\end{equation*}
\end{open}

\section{Generalized alternating sums}

It is possible to investigate the following generalization of the alternating sums discussed above.
Let $Q$ be an arbitrary subset of the set of primes $\P$, let
\begin{equation*}
t_Q(n):= \begin{cases}  1, & \text{if $q\nmid n$ for every $q\in Q$;} \\
-1, & \text{otherwise,}
\end{cases}
\end{equation*}
and
\begin{equation} \label{D_Q}
D_Q(f,s):=\sum_{n=1}^{\infty} t_Q(n) \frac{f(n)}{n^s}.
\end{equation}

If $Q=\{2\}$, then $t_{\{2\}}(n)=(-1)^{n-1}$ and \eqref{D_Q} reduces
to the alternating Dirichlet series \eqref{altern_Dir}.
If $Q=\{2, 3\}$, just to illustrate another special case, then we have
\begin{equation*}
D_{\{2, 3\}}(f,s) = \frac{f(1)}{1^s} -\frac{f(2)}{2^s}
-\frac{f(3)}{3^s} -\frac{f(4)}{4^s} + \frac{f(5)}{5^s} -
\frac{f(6)}{6^s} + \frac{f(7)}{7^s} +\cdots,
\end{equation*}
while the choice $Q=\emptyset$ gives the classical Dirichlet series \eqref{Dir}.

Note that the function $n\mapsto t_Q(n)$ is multiplicative if and only if $Q = \{q\}$ having one element. Proof: If $Q= \{q\}$,
then the function $t_{\{q\}}(n)$ is multiplicative. On the other hand, if there are distinct $q_1,q_2\in Q$, then $t_Q(q_1q_2)=-1 \ne
1 =(-1)(-1)= t_Q(q_1)t_Q(q_2)$. However, the function
\begin{equation*}
c_Q(n):=  \begin{cases}  1, & \text{if $q\nmid n$ for every $q\in Q$;} \\
0, & \text{otherwise;}
\end{cases}
\end{equation*}
is multiplicative for every $Q\subseteq \P$.

\begin{proposition} \label{gen_alt} Let $Q$ be an arbitrary subset of $\P$. If $f$ is a multiplicative function, then
\begin{equation*}
D_Q(f,s) = D(f,s) \left( 2 \prod_{q\in Q} \left(
\sum_{\nu=0}^{\infty}\frac{f(q^\nu)}{q^{\nu s}} \right)^{-1}
-1\right),
\end{equation*}
and if $f$ is completely multiplicative, then
\begin{equation*}
D_Q(f,s) = \prod_{p\in \P} \left(1-\frac{f(p)}{p^s} \right)^{-1} \left( 2 \prod_{q\in Q} \left(1-\frac{f(q)}{q^s} \right)
-1\right),
\end{equation*}
formally or in case of convergence.
\end{proposition}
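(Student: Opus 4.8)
The plan is to mimic the proof of Proposition \ref{prop_1}, replacing the single prime $2$ by the set $Q$. The starting point is the pointwise identity $t_Q(n) = 2c_Q(n) - 1$ valid for every $n\ge 1$, which is immediate from the definitions: if no $q\in Q$ divides $n$ then both sides equal $1$, and otherwise both equal $-1$. Substituting this into \eqref{D_Q} gives
\[
D_Q(f,s) = -\sum_{n=1}^{\infty}\frac{f(n)}{n^s} + 2\sum_{n=1}^{\infty} c_Q(n)\frac{f(n)}{n^s} = -D(f,s) + 2\sum_{n=1}^{\infty} c_Q(n)\frac{f(n)}{n^s},
\]
so everything reduces to evaluating the Dirichlet series of $c_Q f$.

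Next I would use that $c_Q$ is multiplicative (already noted in the excerpt) and that $f$ is multiplicative, hence $c_Q f$ is multiplicative, so by \eqref{multipl_Euler} its Dirichlet series expands as an Euler product $\prod_{p\in\P}\sum_{\nu\ge 0} c_Q(p^\nu) f(p^\nu)/p^{\nu s}$. The local factor at a prime $p\notin Q$ is $\sum_{\nu\ge 0} f(p^\nu)/p^{\nu s}$, since $c_Q(p^\nu)=1$ for all $\nu$; the local factor at a prime $q\in Q$ collapses to the single term $\nu=0$, i.e.\ to $1$, since $c_Q(q^\nu)=0$ for $\nu\ge 1$. Multiplying and dividing by the missing Euler factors over $q\in Q$,
\[
\sum_{n=1}^{\infty} c_Q(n)\frac{f(n)}{n^s} = \prod_{p\notin Q}\sum_{\nu=0}^{\infty}\frac{f(p^\nu)}{p^{\nu s}} = D(f,s)\prod_{q\in Q}\left(\sum_{\nu=0}^{\infty}\frac{f(q^\nu)}{q^{\nu s}}\right)^{-1}.
\]
Plugging this back into the previous display yields the first claimed formula. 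For the completely multiplicative case I would simply insert \eqref{compl_mult_sum} and \eqref{compl_multipl_Euler}, replacing each $\sum_{\nu\ge 0} f(q^\nu)/q^{\nu s}$ by $(1-f(q)/q^s)^{-1}$ and $D(f,s)$ by $\prod_{p\in\P}(1-f(p)/p^s)^{-1}$.

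There is no serious obstacle beyond bookkeeping; the two points deserving a word of care are the following. First, the manipulations with Euler products are to be read as identities of formal Dirichlet series, or else under the standing hypothesis of absolute convergence, exactly as in Proposition \ref{prop_1} --- this is what the phrase ``formally or in case of convergence'' covers. Second, when $Q$ is infinite one should interpret $\prod_{q\in Q}(\,\cdot\,)^{-1}$ as the corresponding limit, and check that the rearrangement of the Euler product isolating the factors over $q\in Q$ is legitimate in the region of convergence; this is handled just as for the classical case $Q=\{2\}$, so the proof is essentially a one-line adaptation of the argument already given for Proposition \ref{prop_1}.
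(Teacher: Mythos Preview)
Your proof is correct and follows essentially the same approach as the paper: both start from $t_Q(n)=2c_Q(n)-1$, evaluate the Dirichlet series of $c_Qf$ via its Euler product over primes $p\notin Q$, and then reinsert the missing factors to express everything in terms of $D(f,s)$. Your version is slightly more explicit about why the local factors collapse at primes in $Q$ and about the convergence caveats, but the argument is identical in substance.
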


\begin{proof} We have
\begin{gather*}
D_Q(f,s) = - \sum_{n=1}^{\infty} \frac{f(n)}{n^s} + 2 \sum_{n=1}^{\infty} c_Q(n) \frac{f(n)}{n^s}
 = - D(f,s) +2 \prod_{p\notin Q} \sum_{\nu=0}^{\infty} \frac{f(p^{\nu})}{p^{\nu s}} \\ = - D(f,s) +2
D(f,s) \prod_{q\in Q} \left(\sum_{\nu=0}^{\infty}
\frac{f(q^{\nu})}{q^{\nu s}}\right)^{-1} = D(f,s) \left( 2
\prod_{q\in Q} \left( \sum_{\nu=0}^{\infty}\frac{f(q^\nu)}{q^{\nu
s}} \right)^{-1} -1\right).
\end{gather*}
\end{proof}

If $Q=\{2\}$, then Proposition \ref{gen_alt} reduces to Proposition \ref{prop_1}.

Some of the discussed asymptotic formulas can also be generalized to
certain subsets $Q\subseteq \P$.  For example, we have the next result.

\begin{theorem} Let $Q$ be an arbitrary  finite subset of $\P$. Then
\begin{equation} \label{sum_sigma_Q}
\sum_{n\le x} t_Q(n) \sigma(n)= \frac{\pi^2}{12} \left(2\prod_{p\in Q} \left(1-\frac1{p}\right)
\left(1-\frac1{p^2}\right)-1\right) x^2
+ O\left(x (\log x)^{2/3} \right).
\end{equation}
\end{theorem}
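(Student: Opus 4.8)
The plan is to mimic the proof of \eqref{sum_sigma}, replacing Proposition \ref{prop_1} by Proposition \ref{gen_alt}. First I would record the local factor of $\sigma$: for every prime $p$,
\[
\sum_{\nu=0}^{\infty} \sigma(p^\nu)\, t^\nu = \frac1{(1-t)(1-pt)},
\]
so, taking $t=p^{-s}$ and using the familiar $\sum_{n\ge 1}\sigma(n)n^{-s}=\zeta(s)\zeta(s-1)$, Proposition \ref{gen_alt} yields, for $\Re s>2$,
\[
D_Q(\sigma,s)=\zeta(s)\zeta(s-1)\left(2\prod_{q\in Q}\Bigl(1-\frac1{q^s}\Bigr)\Bigl(1-\frac1{q^{s-1}}\Bigr)-1\right).
\]

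Next I would extract the convolution identity. Let $h_Q$ be the arithmetic function with $\sum_{n\ge1}h_Q(n)n^{-s}=2\prod_{q\in Q}(1-q^{-s})(1-q^{1-s})-1$; expanding the (finite) product shows that $h_Q$ is supported on the finite set of integers all of whose prime factors lie in $Q$ and occur with exponent at most $2$, with $h_Q(1)=1$ and, for $d>1$, $h_Q(d)=2g(d)$, where $g$ is multiplicative with $g(q)=-(1+q)$, $g(q^2)=q$, $g(q^\nu)=0$ for $\nu\ge3$. When $|Q|\ge 2$ the function $h_Q$ is no longer multiplicative, but this plays no role: what matters is that $h_Q$ has finite support, so $\sum_d|h_Q(d)|$ and $\sum_d|h_Q(d)|/d$ are finite quantities depending only on $Q$. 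As in \eqref{convo}--\eqref{sum} this gives $t_Q(n)\sigma(n)=\sum_{dj=n}h_Q(d)\sigma(j)$, hence
\[
\sum_{n\le x}t_Q(n)\sigma(n)=\sum_{d}h_Q(d)\sum_{j\le x/d}\sigma(j),
\]
a finite sum over the support of $h_Q$.

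Finally I would substitute Walfisz's estimate $\sum_{j\le y}\sigma(j)=\frac{\pi^2}{12}y^2+O\bigl(y(\log y)^{2/3}\bigr)$ \cite[Satz 4, p.\ 99]{Wal1963} into each of the finitely many terms. The main terms combine to
\[
\frac{\pi^2}{12}x^2\sum_d\frac{h_Q(d)}{d^2}=\frac{\pi^2}{12}\left(2\prod_{q\in Q}\Bigl(1-\frac1{q}\Bigr)\Bigl(1-\frac1{q^2}\Bigr)-1\right)x^2,
\]
the value $\sum_d h_Q(d)/d^2$ being obtained by evaluating the Dirichlet polynomial $2\prod_q(1-q^{-s})(1-q^{1-s})-1$ at $s=2$ (legitimate since it is a finite product of polynomials in the $q^{-s}$, so no convergence is needed), while the error terms contribute $\ll x(\log x)^{2/3}\sum_d|h_Q(d)|/d\ll_Q x(\log x)^{2/3}$. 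This is exactly \eqref{sum_sigma_Q}. I do not expect a genuine obstacle here: the only points needing (slight) care are recognizing that $h_Q$ is finitely supported — so that none of the tail and convergence estimates behind Proposition \ref{Prop_f} are required — and that the implied constant is allowed to depend on $Q$.
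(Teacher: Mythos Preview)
Your proposal is correct and follows essentially the same route as the paper: both arguments reduce to convolving $\sigma$ with a function supported on the finite set of $Q$-smooth integers with exponents $\le 2$ and then insert Walfisz's estimate. The only cosmetic difference is that the paper first writes $t_Q=-1+2c_Q$ and works with the multiplicative factor $g$ (your notation) attached to $c_Q\sigma$, whereas you absorb the $-1+2(\cdot)$ into a single, non-multiplicative but still finitely supported $h_Q$; the computations and the resulting constants are identical.
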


\begin{proof} We have
\begin{equation*}
\sum_{n\le x} t_Q(n) \sigma(n) = - \sum_{n\le x} \sigma(n) + 2 \sum_{n\le x} c_Q(n)\sigma(n),
\end{equation*}
where $c_Q(n)\sigma(n)$ is multiplicative and
\begin{equation*}
\sum_{n=1}^{\infty} \frac{c_Q(n)\sigma(n)}{n^s} = \zeta(s) \zeta(s-1) \prod_{p\in Q}
\left(1-\frac{p+1}{p^s}+\frac{p}{p^{2s}} \right).
\end{equation*}

It turns out that
\begin{equation*}
\sum_{n\le x} c_Q(n)\sigma(n)=\sum_{d\le x} h_Q(d) \sum_{j\le x/d} \sigma(j),
\end{equation*}
where the function $h_Q$ is multiplicative and for every prime power $p^\nu$ ($\nu \ge 1$),
\begin{equation*}
h_Q(p^\nu)= \begin{cases} -(p+1), & \text{if $p\in Q$, $\nu=1$;} \\
                               p, & \text{if $p\in Q$, $\nu=2$;} \\
                               0, & \text{otherwise.} \end{cases}
\end{equation*}

Now the proof runs similar to the proof of \eqref{sum_sigma}.
\end{proof}

In the case $Q=\{2\}$ formula \eqref{sum_sigma_Q} reduces to \eqref{sum_sigma}.

\begin{open} Deduce asymptotic formulas for
\begin{equation*}
\sum_{n\le x} t_Q(n) \sigma(n)
\end{equation*}
and for similar sums if $Q$ is an arbitrary fixed subset of the primes.
\end{open}

\section{Acknowledgments} The author thanks Shyamal Biswas for raising the
question of investigating alternating Dirichlet series, Roberto
Tauraso for drawing attention to the paper of Kaluza \cite{Kal1928},
and G\'erald Tenenbaum for helpful remarks on Section
\ref{Sect_Squarefree kernel}. The author is grateful to the
anonymous referee for identity \eqref{b_nu}, Proposition
\ref{Prop_expl} and many other useful comments and
suggestions.

\medskip

(Concerned with the following sequences in \cite{OEIS}: \seqnum{A000005}, \seqnum{A000010}, \seqnum{A000041}, \seqnum{A000203}, \seqnum{A000688},
\seqnum{A001615}, \seqnum{A002088}, \seqnum{A005117}, \seqnum{A006218}, \seqnum{A007947}, \seqnum{A013928}, \seqnum{A018804},
\seqnum{A024916}, \seqnum{A033999}, \seqnum{A034448}, \seqnum{A047994}, \seqnum{A048651}, \seqnum{A049419}, \seqnum{A057521},
\seqnum{A063966}, \seqnum{A064609}, \seqnum{A065442}, \seqnum{A065463}, \seqnum{A068762}, \seqnum{A068773}, \seqnum{A084911},
\seqnum{A143348}, \seqnum{A145353}, \seqnum{A173290}, \seqnum{A177754}, \seqnum{A188999}, \seqnum{A206369}, \seqnum{A272718}.)

\end{document}